\newcommand{\be}{\begin{eqnarray}}
\newcommand{\ee}{\end{eqnarray}}
\newcommand{\beq}{\begin{equation}}
\newcommand{\eeq}{\end{equation}}
\newcommand{\beqn}{\begin{equation*}}
\newcommand{\eeqn}{\end{equation*}}
\newcommand{\N}{\mathbb{N}}
\newcommand{\defas}{\mathrel{\raise.095ex\hbox{$:$}\mkern-4.2mu=}}
\newcommand{\defasr}{\mathrel{=\mkern-4.2mu\raise.095ex\hbox{$:$}}}
\newcommand{\norm}[1]{\lVert#1\rVert}
\newcommand{\average}[1]{\langle#1\rangle}
\newcommand{\ie}{\textit{i.e.}}
\DeclareMathAlphabet{\mathfat}{U}{bbold}{m}{n}          
\newcommand{\one}{\mathfat{1}}
\DeclareMathOperator{\dist}{\mathrm{dist}}
\newtheorem{thm}{Theorem}
\newtheorem{cor}[thm]{Corollary}
\newtheorem{lem}[thm]{Lemma}
\newtheorem{remark}[thm]{Remark}
\newtheorem{convention}[thm]{Convention}
\newcommand\cC{{\mathcal C}}
\newcommand\cE{{\mathcal E}}
\newcommand\cF{{\mathcal F}}
\newcommand\cG{{\mathcal G}}
\newcommand\cJ{{\mathcal J}}
\newcommand\cL{{\mathcal L}}
\newcommand\cM{{\mathcal M}}
\newcommand\cO{{\mathcal O}}
\newcommand\cP{{\mathcal P}}
\newcommand\cT{{\mathcal T}}
\newcommand\cU{{\mathcal U}}
\newcommand\cW{{\mathcal W}}
\newcommand\bA{{\mathbb A}}
\newcommand\bD{{\mathbb D}}
\newcommand\bE{{\mathbb E}}
\newcommand\bN{{\mathbb N}}
\newcommand\bR{{\mathbb R}}
\newcommand\fA{{\mathfrak A}}
\newcommand\fB{{\mathfrak B}}
\newcommand\fI{{\mathfrak I}}
\newcommand\fM{{\mathfrak M}}
\newcommand\fR{{\mathfrak R}}
\newcommand\fT{{\mathfrak T}}
\newcommand\fe{{\mathfrak e}}
\newcommand\ff{{\mathfrak f}}
\newcommand\bfh{{\mathbf h}}
\newcommand{\id}{{\mathrm{id}}}
\newcommand{\ve}{\varepsilon}
\newcommand{\slot}{{\text{\large .}}}
\begin{document}

\title{Non-stationary compositions of Anosov diffeomorphisms}

\author[Mikko Stenlund]{Mikko Stenlund}
\address[Mikko Stenlund]{
Courant Institute of Mathematical Sciences\\
New York, NY 10012, USA; Department of Mathematics and Statistics, P.O. Box 68, Fin-00014 University of Helsinki, Finland.}
\email{mikko@cims.nyu.edu}
\urladdr{http://www.math.helsinki.fi/mathphys/mikko.html}

\keywords{Hyperbolic dynamical systems, non-stationary compositions, non-equilibrium}
\subjclass[2000]{37D20; 60F05, 82C05}

\date{}

\begin{abstract}
Motivated by non-equilibrium phenomena in nature, we study dynamical systems whose time-evolution is determined by non-stationary compositions of chaotic maps. The constituent maps are topologically transitive Anosov diffeomorphisms on a 2-dimensional compact Riemannian manifold, which are allowed to change with time --- slowly, but in a rather arbitrary fashion.  In particular, such systems admit no invariant measure.
By constructing a coupling, we prove that any two sufficiently regular distributions of the initial state converge exponentially with time. Thus, a system of the kind loses memory of its statistical history rapidly.
\end{abstract}

\maketitle


\subsection*{Acknowledgements}
The author has received financial support from the Academy of Finland and the V\"ais\"al\"a Fund. He wishes to thank Lai-Sang Young for many discussions.


\section{Introduction} 

\subsection{Motivation}
Statistical properties of dynamical systems are traditionally studied in a stationary context. Let us elaborate briefly, discussing only discrete time for simplicity. Suppose $\cM$ is the set of all possible states of the system. Given the state $x_n\in\cM$ at some time $n\geq 0$, the state of the system at time $n+1$ is assumed to be either (i) $x_{n+1}=Tx_n$, where $T:\cM\to\cM$ is an \mbox{\textit{a priori}} specified map used at every time step or (ii) $x_{n+1}=T_{n+1} x_n$, where the maps $T_{i}:\cM\to\cM$ are drawn randomly and independently of each other and of $x_0,\dots,x_{i-1}$ from a set of maps $\fT$ according to a distribution $\eta$. Now, suppose the initial point $x_0$ has random distribution~$\mu$: $\operatorname{Prob}(x_0\in E)=\mu(E)$, for all measurable sets $E\subset \cM$.  By stationarity we mean that $\operatorname{Prob}(x_n\in E)=\mu(E)$, for \emph{all}~$n\geq 0$, for all measurable sets $E\subset \cM$. This condition translates to $\mu(T^{-1}E)=\mu(E)$ in case (i) and to $\int_\fT \mu(T^{-1}E)\, d\eta(T) = \mu(E)$ in case (ii). In either case the measure $\mu$ is called invariant. The reader may verify that these definitions result, indeed, in a \emph{strictly} stationary process in that all finite dimensional distributions are shift-invariant: $\operatorname{Prob}(x_{k_1+n}\in E_1,\dots,x_{k_m+n}\in E_m)=\operatorname{Prob}(x_{k_1}\in E_1,\dots,x_{k_m}\in E_m)$, for all choices of the indices and of the measurable sets. Let $f$ be a measurable function on $\cM$, which represents a quantity whose observed values $f(x_n)$ at different times one is interested in. Given an invariant measure one may, for example, study the statistical behavior of the sums $\sum_{i=0}^{n-1}f(x_i)$ of observations, making use of the fact that $(f(x_n))_{n\geq 0}$ is a stationary sequence of random variables.

A key ingredient in obtaining advanced statistical results on interesting systems is chaos, that is to say the dynamical complexity due to sensitive dependence of the trajectories $(x_n)_{n\geq 0}$ on the initial point $x_0$. In this paper we initiate a program to free ourselves from the standard constraint of stationarity, advocating the following view:
\begin{quote}
\begin{it}
Much of the statistical theory of stationary dynamical systems can be carried over to sufficiently chaotic non-stationary systems.  
\end{it}
\end{quote}
The deliberately imprecise statement above is proposed as a guideline and challenge instead of a theorem. We believe that a result obtained for a strongly chaotic stationary system quite generically has a non-stationary counterpart if the corresponding non-stationary system continues to be sufficiently chaotic.

The inspiration for undertaking the program stems from non-equilibrium processes in nature where it is often unfounded or simply false to assume that an observed system is driven by stationary forces. For example, it is conceivable that an ambient system governed in principle by measure preserving dynamics is, for all practical time scales, in a non-equilibrium state, so that the subsystem actually being observed is better modeled separately in terms of non-stationary dynamical rules.
The remark is by no means limited to situations of physical interest alone, but seems to lend itself rather universally to applied sciences. Second, from a purely theoretical point of view it appears very restrictive to focus only on stationary dynamical models.  

In order to advance the program in a meaningful way, we need a concrete model to work with. Deferring technical definitions till later, let the state $x_n\in\cM$ of the system at time $n$ be determined by the action of the composition $T_n\circ \dots\circ T_1$ on the initial state $x_0\in\cM$, where each map $T_i:\cM\to\cM$ describes the dynamical rules at time $i$. 
For us, the constituent maps $T_i$ are topologically transitive Anosov diffeomorphisms on a 2-dimensional compact Riemannian manifold $\cM$, which form a prime class of nontrivial chaotic maps. It is clear that some additional control is needed; for instance, an alternating sequence of an Anosov diffeomorphism $T$ and its inverse $T^{-1}$ would yield $T_n\circ \dots\circ T_1=\id$ for even values of $n$, which does not result in chaotic dynamics. To that end, the maps $T_i$ are here assumed to evolve slowly with time $i$, but otherwise they may do so in a rather arbitrary fashion. 
We point out that the maps $T_i$ need not be randomly picked, there is no assumption of stationarity, and for large~$n$ the map~$T_n$ may be far from the map~$T_1$. Even if all the maps $T_i$ preserved the same initial measure, so that the random variables $x_n$ were identically distributed, the process $(x_n)_{n\geq 0}$ would typically fail to be  stationary.

We call such compositions \emph{non-stationary} and think of them as descriptions of \emph{dynamical systems out of equilibrium}.

We prove in this paper that the system at issue loses memory of its initial state exponentially. More accurately, assume $x_0$ has either distribution $\mu^1$ or $\mu^2$ and call $\mu^1_n$ and $\mu^2_n$, respectively, the corresponding distributions of $x_n$. Our main result states that if $\mu^1$ and $\mu^2$ are sufficiently regular, then the difference $ \int f\,d \mu^1_n - \int f\,d \mu^2_n$ tends to zero at an exponential rate with increasing $n$, provided $f$ is a suitable test function. This type of weak convergence is natural due to the invertibility of the dynamics: the supports of the measures $\mu^1_n$ and $\mu^2_n$ will never overlap unless they did so initially. Instead, they tend to concentrate increasingly on unstable manifolds due to the contracting direction of the maps $T_i$ and then wind wildly around the phase space $\cM$ due to the expansion on unstable manifolds. Hence, one cannot hope to identify ever-increasing portions of $\mu^1_n$ and $\mu^2_n$ unless one first integrates against a test function that  possesses some regularity along stable manifolds. In spite of the convergence of the difference $\mu_n^1 - \mu_n^2$ for arbitrary initial measures $\mu^1$ and $\mu^2$, in general the limit measures $\lim_{n\to\infty}\mu_n^i$ do not exist individually even in the weak sense. It is more appropriate to think that all regular measures are attracted by a moving target in the space of measures. 

Finally, let us point out that in the real world, where observations take place on finite time scales, one is not interested in the excessively distant future. To underline this, the results here are finite-time results, in which the sequence $T_1,\dots,T_n$ is assumed to be known only up to some finite value of $n$. The lack of infinite future leads to certain technical problems to be discussed and dealt with below.

In \cite{OttStenlundYoung} analogous results were obtained for uniformly expanding and piecewise expanding maps. The situation of the present paper is markedly more complicated because our Anosov diffeomorphisms have a contracting direction. Some steps in this direction were taken in \cite{AyyerStenlund}, where mixing for certain arbitrarily ordered compositions of finitely many toral automorphisms was established. 
There are other studies which contain at least some elements that in spirit are not very far from our setting. In \cite{Bakhtin1,Bakhtin2} compositions of hyperbolic maps --- all close to each other --- were studied and limit theorems proved.   An abstract operator theoretic approach for obtaining limit theorems was described in \cite{ConzeRaugi}, with applications to piecewise expanding interval maps. Moreover, symbolic dynamics of non-stationary subshifts of finite type was considered in \cite{ArnouxFisher}. An extensive literature on random compositions of maps exists. It will not be reviewed here, as the present paper concerns quite a different type of questions. Nevertheless, some of the techniques developed below should be useful in the context of random maps as well.

\subsection{Structure of the paper}
In Section~\ref{subsec:Anosov_comp} we describe the precise setting of the paper. In particular, we explain what kind of compositions of maps we are interested in and discuss our standing assumptions. After that, the main result of the paper, Theorem~\ref{thm:weak_conv}, is formulated. Section~\ref{subsec:standard} introduces some basic concepts needed throughout the paper. The Introduction ends with Section~\ref{subsec:contributions}, which discusses what the author perceives as the most important contributions of the paper, including a technical version of Theorem~\ref{thm:weak_conv}.

In Section~\ref{sec:foliations} we define finite-time stable and unstable distributions and stable foliations needed to keep track of the dynamics with appropriate accuracy. We also prove quantitative results concerning the distortion effects of the dynamics. Subsequently, we are able to define in a meaningful way finite-time holonomy maps which satisfy useful bounds.

In Section~\ref{sec:coupling} we formulate the central result of the paper --- the Coupling Lemma. It is then used to prove Theorem~\ref{thm:equidistr}, which subsequently implies Theorem~\ref{thm:weak_conv}.
The Coupling Lemma itself is proved in Section~\ref{sec:coupling_proof}, which is the most technical part of the paper. 

To maintain the flow of the discussion, some key technical facts have been separated from the main text and presented in the appendices. They are cited in the text as needed. Appendix~\ref{app:Holder} is of special interest; there we prove the uniform H\"older regularity of the finite-time stable and unstable distributions introduced in Section~\ref{sec:foliations}.

\subsection{Compositions of Anosov diffeomorphisms}\label{subsec:Anosov_comp}
Fix $Q\in\N$. For each $1\leq q\leq Q$, let $\widetilde T_q:\cM\to\cM$ be a topologically transitive $\cC^2$ Anosov diffeomorphism on the 2-dimensional compact Riemannian manifold $\cM$ with metric $d$ embedded in an ambient space $\bR^M$ \footnote{Such a diffeomorphism is topologically conjugate to an automorphism of the torus.}. The Riemannian volume is denoted by $m$. The map $\widetilde T_q$ admits an invariant Sinai--Ruelle--Bowen (SRB) measure, $\mu_q$, which is mixing; see for example~\cite{BressaudLiverani}. In general, such a measure is not absolutely continuous with respect to the Riemannian volume. Let $\cU_q = \bD(\widetilde T_q,\ve_q)$ be disk neighborhoods of small radii $\ve_q>0$ in the $\cC^2$ topology. Now, pick a finite sequence $(T_n)$ of Anosov diffeomorphisms such that 
\beq\label{eq:nhood}
T_n\in\cU_q\qquad \forall\,n\in I_q = (n_{q-1},n_q], 
\eeq
where $0 = n_0 < n_1 <\dots \leq n_Q$. For technical reasons, also set $T_n = \widetilde T_Q$ for all $n > n_Q$. We assume that the intervals $I_q$ are long enough:
\beq\label{eq:time}
|I_q| = n_q-n_{q-1} \geq N_q,
\eeq
where the numbers $N_q$, $1\leq q\leq Q$, will be assumed suitably large. 
We will be interested in the statistical properties of the compositions 
\beq\label{eq:composition}
\cT_n=T_n\circ\dots\circ T_1\qquad
n\leq n_Q.
\eeq
The maps $\widetilde T_q$ serve as successive guiding points which the sequence $(T_n)$ follows in the space of Anosov diffeomorphisms, spending a sufficiently long time $N_q$ in each neighborhood $\cU_q$ before moving on to $\cU_{q+1}$. We also write $ \cT_{n,m} = T_n \circ\dots\circ T_m$ for $m\leq n$.

Each $\widetilde T_q$ admits a unique continuous invariant splitting of the tangent bundle: for each $x\in\cM$, $T_x\cM = E^u_{q,x}\oplus E^s_{q,x}$, where the 1-dimensional linear spaces $E^{u,s}_{q,x}$ depend continuously on the base point $x$, $D_x \widetilde T_q E^u_{q,x} = E^u_{q,\widetilde T_qx}$ and $D_x \widetilde T_q E^s_{q,x} = E^s_{q,\widetilde T_q x}$. In fact, in our 2-dimensional setting, the dependence on the base point is $\cC^{1+\alpha}$ for some $\alpha>0$, because the so-called bunching conditions \cite{KatokHasselblatt} are satisfied. The families $E^u_q=\{E^u_{q,x}\}$ and $E^s_q=\{E^s_{q,x}\}$ are called the unstable and stable distributions of $\widetilde T_q$, respectively, and their integral curves are called unstable and stable manifolds of $\widetilde T_q$, respectively. By continuity, the angle between $E^u_q$ and $E^s_q$ at each point is uniformly bounded away from zero.
The maps also have continuous families of unstable cones, $\{\cC^u_{q,x}\}$, and stable cones, $\{\cC^s_{q,x}\}$. These can be defined by setting
\beqn
\begin{split}
\cC^u_{q,x} &= \{v^u+v^s\,:\,v^u\in E_{q,x}^u,\, v^s\in E_{q,x}^s,\, \norm{v^s} \leq a_q \norm{v^u}\}, \\
\cC^s_{q,x} &= \{v^u+v^s\,:\,v^u\in E_{q,x}^u,\, v^s\in E_{q,x}^s,\, \norm{v^u} \leq a_q \norm{v^s}\},
\end{split}
\eeqn
for some constants $a_q>0$ such that
\begin{enumerate}
\item[(C1)] $D_x \widetilde T_q^n  \cC^u_{q,x} \subset \{0\}\cup \operatorname{int}  \cC^u_{q,\widetilde T_q^n x}$ and $D_x \widetilde T_q^{-n} \cC^s_{q,x} \subset \{0\}\cup  \operatorname{int}  \cC^s_{q,\widetilde T_q^{-n}x}$ if $n
\geq p_q$,
\item[(C2)] $\norm{D_x \widetilde T_q^n v} \geq \widetilde C_q \widetilde \Lambda_q^n \norm{v}$ if $v\in  \cC^u_{q,x} $ and $\norm{D_x \widetilde T_q^{-n} v} \geq \widetilde C_q \widetilde\Lambda_q^n \norm{v}$ if $v\in  \cC^s_{q,x} $,
\end{enumerate}
for constants $p_q\geq 1$, $0<\widetilde C_q<1$, and $\widetilde \Lambda_q >1$.

We make the following standing assumptions:
\begin{itemize}
\item[\bf(A0)] $p_q = 1$ in condition (C1) above.
\item[\bf(A1)] $D_x T \cC^u_{q,x} \subset \{0\}\cup  \operatorname{int}  \cC^u_{q,Tx}$ and $D_x T^{-1} \cC^s_{q,x} \subset \{0\}\cup  \operatorname{int}  \cC^s_{q,T^{-1}x}$ for all $T\in \cU_q$.
\item[\bf(A2)] There exist constants $0<C_q<1$ and $\Lambda_q>1$ such that, if each $T_i\in \cU_q$ for a fixed $q$, $\norm{D_x \cT_n v} \geq  C_q  \Lambda_q^n \norm{v}$ if $v\in  \cC^u_{q,x} $ and $\norm{D_x \cT_n^{-1} v} \geq C_q \Lambda_q^n \norm{v}$ if $v\in  \cC^s_{q,x} $.
\item[\bf(A3)] $D_x T \cC^u_{q,x} \subset \{0\}\cup  \operatorname{int}  \cC^u_{q+1, Tx}$ if $T\in \cU_{q+1}$ and $D_x T^{-1} \cC^s_{q+1,x} \subset \{0\}\cup  \operatorname{int}  \cC^s_{q,T^{-1}x}$ if $T\in \cU_q$.
\item[\bf(A4)] The numbers $a_q$ can be assumed small. 
\end{itemize}

\begin{convention}
From now on we will assume that $Q$ reference Anosov diffeomorphisms $\widetilde T_1,\dots,\widetilde T_Q$ have been fixed. When we say that a result does not depend on the choice of the sequence $(T_i)$, we mean that the result holds true uniformly for all finite sequences $(T_i)_{i=1}^{n_Q}$ of any length $n_Q$, provided \eqref{eq:nhood} and Assumptions (A) are satisfied and the numbers $N_q$ appearing in \eqref{eq:time} are large enough.
\end{convention}

Given $0<\gamma<1$, we say that a function $f:\cM\to \bR$ is a $\gamma$-H\"older continuous observable, if
\beqn
{|f|}_\gamma \equiv \sup_{x\neq y}\frac{|f(x)-f(y)|}{d(x,y)^\gamma} < \infty.
\eeqn

We are now in position to state our main theorem, which is reminiscent of weak convergence of measures in  probability theory. 
\begin{thm}[Weak convergence]\label{thm:weak_conv}
There exist constants $0<\eta<1$ and $C>0$, for which the following statements hold.
Let $d\mu^i = \rho^i dm$ ($i=1,2$) be two probability measures, absolutely continuous with respect to the Riemannian volume $m$, such that $\rho^i$ are strictly positive and $\eta$-H\"older continuous on $\cM$. If $f$ is continuous, then
\beqn
\left|\int_{\cM} f\circ \cT_n\, d\mu^1 - \int_{\cM} f\circ \cT_n\, d\mu^2\right| \leq A_f(n),  \qquad n\leq n_Q,
\eeqn
where $A_f(n)=o(1)$. Given $0<\gamma<1$, there exist constants $0<\theta_\gamma<1$ and $C_\gamma=C_\gamma(\rho^1,\rho^2)>0$ such that, if $f$ is $\gamma$-H\"older, then $A_f(n)=C_\gamma B_f\theta_\gamma^n$ with
$
B_f = C(\sup f-\inf f) +  {|f|}_\gamma$. In either case, the various constants do not depend on the choice of the sequence $(T_i)$, in particular its length $n_Q$, as long as the earlier assumptions hold and the numbers $N_q$ appearing in \eqref{eq:time} are large enough. Among the constants only $C_\gamma$ depends on the densities $\rho^i$, and in fact it only depends on the H\"older constants of $\ln\rho^i$. 
\end{thm}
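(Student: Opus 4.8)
The plan is to reduce the statement to a coupling argument carried out on families of short unstable curves carrying H\"older densities (``standard families''), the relevant machinery being the stable foliations, distortion bounds and holonomies of Section~\ref{sec:foliations} together with the Coupling Lemma of Section~\ref{sec:coupling}. First I would realize each $\mu^i$ as such a family: foliate a set of full $m$-measure by short curves tangent to the unstable cone field $\cC^u_{1,\slot}$ (the cones governing $T_1,\dots,T_{n_1}$, which all lie in $\cU_1$), disintegrate $m$ along this foliation, and multiply the conditional measures by $\rho^i$. Since $\rho^i$ is strictly positive and H\"older, the conditional density on each leaf is, up to a smooth Jacobian factor coming from the foliation, again H\"older, with the H\"older norm of its logarithm bounded in terms of that of $\ln\rho^i$ plus a universal constant. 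This is the only point at which the densities enter, which accounts for $C_\gamma$ depending only on the H\"older constants of $\ln\rho^i$. Pushing forward by $\cT_n$ and controlling distortion and holonomy as in Section~\ref{sec:foliations}, the images $\mu^1_n,\mu^2_n$ remain standard families within the class to which the Coupling Lemma applies.

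The core input is the Coupling Lemma: given two standard families of equal total mass, there is a coupling $\Pi$ --- a probability measure on pairs of points lying on a common finite-time stable curve --- such that within a bounded recovery time a fixed fraction $1-\vartheta$ of the mass becomes coupled, the uncoupled part again splitting into standard families. Iterating over the successive blocks $I_q$ (here the largeness of the $N_q$ is used: each block must be long enough to perform at least a bounded number of coupling steps, each with a definite success fraction, \emph{uniformly} over the admissible maps in $\cU_q$), the mass left uncoupled at time $n$ decays geometrically at a rate equal to the maximum over $q=1,\dots,Q$ of the per-block rates. Because $Q$ and the reference maps $\widetilde T_1,\dots,\widetilde T_Q$ are fixed, this maximum is strictly less than $1$ and independent of the sequence $(T_i)$ and its length --- this is precisely where the convention of finitely many guiding diffeomorphisms is used.

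Granting this, the estimate for a $\gamma$-H\"older observable $f$ is book-keeping. Writing
\[
\int f\circ\cT_n\,d\mu^1-\int f\circ\cT_n\,d\mu^2=\int\bigl(f(\cT_n x)-f(\cT_n y)\bigr)\,d\Pi(x,y),
\]
the uncoupled pieces of the two marginals have equal mass and contribute at most (uncoupled mass)$\cdot(\sup f-\inf f)$, hence at most $C(\sup f-\inf f)\theta_\gamma^n$ after the geometric decay above. For the coupled part, split $\Pi$ according to the generation $k$ at which a pair was coupled: such a pair lies on a common stable curve whose $\cT_n$-image has length $\lesssim C_q\Lambda_q^{-k}$ by Assumption (A2), so $\abs{f(\cT_n x)-f(\cT_n y)}\lesssim{|f|}_\gamma\lambda^{\gamma k}$ for a uniform $\lambda<1$, while the mass of generation $k$ does not exceed the increment of mass coupled at time $n-k$, itself $\lesssim\vartheta^{\,n-k}$ in order. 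Summing the resulting double geometric series in $k$ yields a bound $C_\gamma\bigl(C(\sup f-\inf f)+{|f|}_\gamma\bigr)\theta_\gamma^n$ with $\theta_\gamma$ a suitable mixture of $\vartheta$ and $\lambda^\gamma$; for $n>n_Q$ only $\widetilde T_Q$ acts and the same bound persists with $q=Q$. This is the H\"older case, equivalently Theorem~\ref{thm:equidistr}, and hence the quantitative part of Theorem~\ref{thm:weak_conv}.

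For a merely continuous $f$, use that the $\gamma$-H\"older functions are dense in $C(\cM)$ in the uniform norm: given $\varepsilon>0$ pick a $\gamma$-H\"older $g$ with $\norm{f-g}_\infty<\varepsilon$; then the left-hand side is at most $2\varepsilon+C_\gamma B_g\theta_\gamma^n$, which is $<3\varepsilon$ once $n$ exceeds a threshold depending only on $\varepsilon$, $g$ and the $\rho^i$ --- in particular not on the sequence $(T_i)$. Hence $A_f(n)=o(1)$ with the stated uniformity. The genuine difficulty in the whole argument is not the estimates above but the Coupling Lemma itself: constructing stable foliations and holonomy maps from only finite-time information, and driving the coupling with a success fraction that is uniform over the perturbation neighborhoods $\cU_q$; this is the content of Sections~\ref{sec:coupling} and~\ref{sec:coupling_proof}.
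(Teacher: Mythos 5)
Your proposal is correct and follows essentially the same route as the paper: disintegrate $\mu^1,\mu^2$ into standard families on unstable curves, invoke the Coupling Lemma and split the error into an uncoupled part controlled by the tail bound times $\sup f-\inf f$ and a coupled part controlled by ${|f|}_\gamma$ times the exponentially shrinking stable distance, then handle continuous $f$ by H\"older approximation. The only cosmetic differences are that you sum over coupling generations where the paper splits at $\Upsilon\le n/2$, and that the paper makes explicit the finite waiting time $N(\rho^1,\rho^2)$ needed to regularize densities whose H\"older constant exceeds $C_{\mathrm{r}}$, which is exactly where $C_\gamma=\theta_\gamma^{-N}$ comes from.
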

In other words, if $f$ is continuous, the difference between the two integrals $\int_{\cM} f\circ \cT_n\, d\mu^i$ is eventually arbitrarily small, assuming there are sufficiently many maps in the finite sequence $(T_i)$ of $n_Q$ maps. The latter means that at least one of the intervals $I_q$ in \eqref{eq:time} is sufficiently long, and consequently $n_Q$ is large. What is more, the rate of convergence is exponential, if $f$ is H\"older continuous. By approximation, one can get an $o(1)$ estimate also for general continuous densities~$\rho^i$. 

Let us emphasize once more that despite such convergence or pairs of measures, it does not make any sense to speak of a limit measure, because the maps $T_n$ keep evolving with time --- possibly drifting very far from $T_1$. Furthermore, all observations in our theorems are restricted to times not exceeding (the arbitrarily large but finite) $n_Q$.

Theorem~\ref{thm:weak_conv} remains true for much more general, SRB-like, initial measures. It is enough that each measure $\mu^i$ can be disintegrated relative to a measurable partition $\cP^i$ such that the partition elements $W\in\cP^i$ are smooth unstable curves with respect to the cones $\{\cC_{1,x}^u\}$ with uniformly bounded curvatures and the conditional measures $\mu^i| W$ have regular densities. See below for details. 

In our formulation of the theorem, the convergence rate $\theta_\gamma$ is constant. The latter depends on the reference diffeomorphisms $\widetilde T_q$, $1\leq q\leq Q$. A sharper, variable, convergence rate that depends also on the time interval $I_q$ that $n$ belongs to, can be deduced from the proof.

We finish the section by discussing Assumptions (A) and how they could be relaxed.

Assumption (A0) is one of convenience; we could as well assume that $T_{p_q}\circ\dots\circ T_1$ is sufficiently close to $\widetilde T^{p_q}$, but have opted for a streamlined presentation. 
Assumptions (A1) and (A2) state that compositions of maps belonging to $\cU_q$ have similar hyperbolicity properties as powers of $\widetilde T_q$. The following lemma is proved after a few paragraphs: 
\begin{lem}\label{lem:ass}
Assumptions (A1) and (A2) are satisfied if $\ve_q$ is sufficiently small.
\end{lem}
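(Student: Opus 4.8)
The plan is to deduce Assumptions (A1) and (A2) from the hyperbolicity of the reference map $\widetilde T_q$, encoded in conditions (C1) and (C2), by a continuity/perturbation argument in the $\cC^2$ (and hence $\cC^1$) topology. The key quantitative input is that the cone conditions and the expansion estimates are \emph{open} conditions: they involve strict inclusions and strict inequalities, so they survive small perturbations of the derivative, uniformly over the compact manifold $\cM$.

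For (A1), first I would note that by (A0) we have $p_q=1$, so $D_x \widetilde T_q\, \cC^u_{q,x} \subset \{0\}\cup\operatorname{int}\cC^u_{q,\widetilde T_q x}$ and similarly for the stable cones of $\widetilde T_q^{-1}$. Since $\cM$ is compact and the distributions $E^{u,s}_q$ and hence the cone fields $\cC^{u,s}_q$ depend continuously on the base point, the "room" in the strict inclusion is uniform: there is $\delta_q>0$ such that $D_x\widetilde T_q$ maps $\cC^u_{q,x}$ into the cone obtained by shrinking $\cC^u_{q,\widetilde T_q x}$ by a factor related to $\delta_q$. Now if $T\in\cU_q = \bD(\widetilde T_q,\ve_q)$ with $\ve_q$ small, then $\|D_x T - D_x\widetilde T_q\|$ is uniformly small on $\cM$, and by continuity of $T$ the target point $Tx$ is close to $\widetilde T_q x$, so $\cC^u_{q,Tx}$ is close to $\cC^u_{q,\widetilde T_q x}$. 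Choosing $\ve_q$ small enough that both perturbations (of the derivative and of the target cone) are dominated by $\delta_q$ gives $D_x T\,\cC^u_{q,x}\subset\{0\}\cup\operatorname{int}\cC^u_{q,Tx}$; the stable statement is symmetric, applied to $T^{-1}$ and using that $T\mapsto T^{-1}$ is continuous in the $\cC^1$ topology.

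For (A2), the strategy is to iterate the one-step cone-invariance from (A1) and accumulate expansion. Fix $q$ and suppose each $T_i\in\cU_q$. By (A1), for $v\in\cC^u_{q,x}$ the vectors $D_x\cT_k v$ stay in the unstable cones $\cC^u_{q,\cT_k x}$ for all $k$, so at every step the expansion factor is controlled from below. Quantitatively, from (C2) with $n=1$, $\|D_y\widetilde T_q w\|\geq \widetilde C_q\widetilde\Lambda_q\|w\|$ for $w\in\cC^u_{q,y}$; if $\ve_q$ is small then $\|D_y T w\|\geq \widetilde C_q\widetilde\Lambda_q'\|w\|$ with $\widetilde\Lambda_q' $ slightly smaller than $\widetilde\Lambda_q$ but still $>1$, uniformly for $T\in\cU_q$ and $w$ in the cone. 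Composing $n$ such steps yields $\|D_x\cT_n v\|\geq (\widetilde\Lambda_q')^n\|v\|$ — no constant $\widetilde C_q$ is lost because we are multiplying single-step estimates — so one may take $C_q=1$ and $\Lambda_q=\widetilde\Lambda_q'$; similarly on the stable side via $\cT_n^{-1}=T_1^{-1}\circ\dots\circ T_n^{-1}$. One subtlety is that (C2) for $\widetilde T_q$ is stated with $n\geq 1$ arbitrary and carries the constant $\widetilde C_q<1$, which reflects possible initial non-expansion; to get the clean single-step bound used above I would instead invoke (C2) with $n=p_q$ iterates of $\widetilde T_q$ (or pass to an adapted metric making $\widetilde T_q$ strictly expanding on unstable cones in one step), and then perturb.

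The main obstacle is purely bookkeeping: making the two sources of perturbation in the (A1) argument — the change in $D_x T$ versus $D_x\widetilde T_q$, and the displacement of the base point $Tx$ versus $\widetilde T_q x$, which moves the target cone field — simultaneously uniform over $x\in\cM$, and then checking that the resulting single-step expansion constant stays strictly above $1$ after passing from $\widetilde T_q$ to arbitrary $T\in\cU_q$. Compactness of $\cM$ and continuity of the cone fields make all of this routine, so no genuine difficulty is expected; the lemma is essentially a robustness statement for uniform hyperbolicity under $\cC^1$-small perturbations.
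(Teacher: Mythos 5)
Your treatment of (A1) is fine and is essentially the paper's argument: the paper packages your two perturbation sources into one step by writing $D_xT\,\cC^u_{q,x}=D_{\widetilde T_q x}(T\widetilde T_q^{-1})\,D_x\widetilde T_q\,\cC^u_{q,x}$ with $D_{\widetilde T_q x}(T\widetilde T_q^{-1})=\one+\cO(\ve_q)$, and then uses strictness of the inclusion plus compactness, exactly as you propose.

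The (A2) part has a genuine gap. Condition (C2) at $n=1$ only gives $\norm{D_y\widetilde T_q w}\geq \widetilde C_q\widetilde\Lambda_q\norm{w}$ with $\widetilde C_q<1$, and the product $\widetilde C_q\widetilde\Lambda_q$ need not exceed $1$; there is no hypothesis giving one-step expansion of cone vectors in the given metric. So your central claim --- a uniform single-step bound $\norm{D_yTw}\geq\widetilde\Lambda_q'\norm{w}$ with $\widetilde\Lambda_q'>1$ for all $T\in\cU_q$, which you then compose "with no constant lost" to get $C_q=1$ --- does not follow from the stated assumptions. Your hedge does not repair this: invoking (C2) "with $n=p_q$" is a red herring, since $p_q$ is the cone-alignment time in (C1) (and equals $1$ by (A0)), not the time at which $\widetilde C_q\widetilde\Lambda_q^n$ exceeds $1$; and the adapted-metric route, while salvageable in principle, is not carried out (one must construct a norm uniformly adapted on the whole cone field, check the one-step expansion survives the $\cC^1$ perturbation, and then converting back to the original metric reintroduces a constant, so the conclusion $C_q=1$ in the given metric is false --- harmless for (A2), which allows $C_q<1$, but it shows the bookkeeping was not done). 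The paper's proof avoids all this with a block argument: fix $N=N(q)$ so large that $\tfrac12\widetilde C_q\widetilde\Lambda_q^N>1$, bound $\norm{D_x\cT_N-D_x\widetilde T_q^N}\leq C(N)\ve_q$ so that $\norm{D_x\cT_N v}\geq\tfrac12\widetilde C_q\widetilde\Lambda_q^N\norm{v}$ on the cone for $\ve_q$ small, set $\Lambda_q=(\tfrac12\widetilde C_q\widetilde\Lambda_q^N)^{1/N}$, and then write $n=kN+l$ with a uniform lower bound $c_q$ for the leftover $l<N$ steps, which is precisely where the constant $C_q=c_q/\Lambda_q^N<1$ in (A2) comes from. (Note the $N$-block comparison also implicitly uses (A1) to keep intermediate images in the cones when concatenating blocks, which you did state correctly.)
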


Assumption (A3) guarantees that hyperbolicity prevails when a transition from $\cU_q$ to $\cU_{q+1}$ occurs. The first part of (A3) could be relaxed by replacing the map $T\in \cU_{q+1}$ by sufficiently long compositions $\cT_n=T_n\circ\cdots\circ T_1$ of maps with each $T_i\in \cU_{q+1}$: given a sufficiently large $r_q>0$,  $D_x \cT_n \cC^u_{q,x} \subset \{0\}\cup  \operatorname{int}  \cC^u_{q+1, \cT_nx}$ if $n\geq r_q$ and $T_i\in \cU_{q+1}$ for $1\leq i \leq n$. The assumption is then satisfied, for example, if $\cU_q\cap\cU_{q+1}\neq\varnothing$ and if $\ve_q$ is small, for all $q$. However, $\cU_q$ and $\cU_{q+1}$ need not overlap or even be close to each other, as most vectors in the tangent space $T_x\cM$ get eventually mapped by $\cT_n=T_n\circ\dots\circ T_1$ into $\cC^u_{q+1,\cT_n x}$ if each $T_i\in \cU_{q+1}$ and if $\ve_{q+1}$ is small. Similar remarks hold for the second part of (A3). This way, the sequence $(T_n)$ used to build up the compositions \eqref{eq:composition} might, without affecting our analysis, involve occasional long jumps from one neighborhood $\cU_q$ to the next, as long as the number of steps $|I_q|$ spent in each neighborhood $\cU_q$, see \eqref{eq:time}, is sufficiently large.

Assumption (A4) means that the cones can be assumed narrow. This is not restrictive for our purposes either, as it follows from (C2) that arbitrarily narrow cones can be treated by considering sufficiently long compositions of maps in a given $\cU_q$ with a sufficiently small $\ve_q$.

Recapitulating, it would be adequate to assume that the properties above hold eventually, for sufficiently long compositions of maps, and in this case the assumptions are very natural and easily fulfilled. For technical convenience and notational ease, we assume from now on that all the nice properties hold immediately, after the application of just one map.

\begin{proof}[Proof of Lemma~\ref{lem:ass}]
We will prove the claims for unstable cones, going forward in time. Similar arguments work for the stable cones, by reversing time.

(A1): Suppose $T \in \cU_q$. By the chain rule $D_x T \cC_{q,x}^u = D_{\widetilde T_q x} (T \widetilde T_q^{-1}) D_x \widetilde T_q \, \cC_{q,x}^u$, where $D_x \widetilde T_q \cC_{q,x}^u\subset \{0\}\cup \operatorname{int} \cC_{q,\widetilde T_q x}^u$. By the continuity of the cones with respect to the base point and the fact that $D_{\widetilde T_q x} (T\widetilde T_q^{-1}) = \one + \cO(\ve_q)$ \footnote{Here it is understood that $\cM$ is embedded in the ambient space $\bR^M$ and that $D_{\widetilde T_q x} (T\widetilde T_q^{-1})$ acts between the linear subspaces $T_{\widetilde T_q x}\cM$ and $T_{T x}\cM$ of $\bR^M$.}, we have $D_x T \cC_{q,x}^u\subset \{0\}\cup \operatorname{int} \cC_{q,Tx}^u$, provided  $\ve_q$ is sufficiently small. Compactness guarantees that $\ve_q$ can be chosen independently of $x$.

(A2): For each $i$ and $x$, $D_xT_i = D_x\widetilde T_q + \cE_{i,x}$, where $\sup_{i,x}\norm{\cE_{i,x}} = \cO(\ve_q)$. We can bound $\norm{D_x \cT_N - D_x \widetilde T_q^N}\leq C(N) \ve_q$. If $\ve_q$ is sufficiently small, we have $\norm{D_x \cT_N v} \geq \norm{D_x \widetilde T_q^N v} - C(N)\ve_q \norm{v}\geq \frac{1}{2}\widetilde C_q\widetilde \Lambda_q^N\norm{v}$ for  $v\in \cC^u_{q,x}$. Now assume $N=N(q)$ is so large that $\frac{1}{2}\widetilde C_q\widetilde \Lambda_q^N>1$. Here $N$ depends neither on $x$, on $v$, nor on the choice of the maps $T_i \in \cU_q$. Let us set $\Lambda_q = (\frac{1}{2}\widetilde C_q\widetilde \Lambda_q^N)^{1/N}$. The uniform estimate $\norm{D_x \cT_n v} \geq  c_q \norm{v}$ holds with some $c_q=c_q(N)$ for $1\leq n < N$. Now, assume $n = kN+l$, $0\leq l <N$. Then $\norm{D_x \cT_n  v} \geq  c_q \norm{D_x\cT_{kN}  v} \geq c_q \Lambda_q^{kN} \norm{v}$. Thus, we can take $C_q = c_q/\Lambda_q^N$. 
\end{proof}

\subsection{Unstable curves with smooth measures}\label{subsec:standard}
We call a smooth curve $W\subset \cM$ unstable with respect to $\{\cC^u_{q,x}\}$ if its tangent space at each point $x\in W$ is contained in the unstable cone $\cC^u_{q,x}$, \ie, $T_xW\subset \cC^u_{q,x}$. Stable curves are defined similarly. Let $W(x,y)\subset W$ denote the subcurve of $W$ whose end points are $x,y\in W$. The length $|W|$ of a curve $W$ is given by
\beqn
|W| = \int_W dm_W,
\eeqn 
where $m_W$ stands for the measure $m_W$ on $W$ induced by the Riemannian metric. Also, let $\kappa(W)$ stand for the maximum curvature of $W$: if $u(x)$ is a unit tangent vector of $W$ at $x$ depending smoothly on $x$, then $\kappa(W)=\sup \|u\cdot\nabla u\|$.

It is convenient to consider curves of bounded length and curvature only. Hence, we introduce two length caps, $L$ and $\ell<L$, and a curvature cap $K$, and say that a smooth curve $W$ is \emph{standard}, if $\ell\leq |W|\leq L$ and if $\kappa(\cT_nW)\leq K$ for all $n\geq 0$. If $|W|>L$, we can always ``standardize'' it by cutting it into shorter subcurves. If an unstable curve is of length less than $\ell$, it will eventually grow under the application of the sequence $(T_i)$, such that $|\cT_n W|\geq \ell$ for sufficiently large $n$. The dynamics also flattens unstable curves, such that $\kappa(\cT_n W)\leq K$ for all sufficiently large $n$, even if $\kappa(W)>K$. We will confirm these last two facts in the following. Finally, there are no discontinuities which would introduce more short curves under the dynamics by cutting longer ones. Taking these considerations into account it is quite natural to commit to the mild constraint that all curves are standard curves to begin with. This will help keep the somewhat technical discussion as clear as possible.

A \emph{standard pair} $(W,\nu)$ (w.r.t.\ $\{\cC^u_{q,x}\}$) consist of a standard unstable curve (w.r.t.\ $\{\cC^u_{q,x}\}$), $W$, and a probability measure, $\nu$, on $W$. The measure $\nu$ is assumed absolutely continuous with respect to $m_W$ on $W$ with a density, $\rho$, that is regular in the following sense: for some global constants $C_\mathrm{r}>0$ and $\eta_\mathrm{r}\in(0,1]$ to be fixed later \footnote{$C_\mathrm{r}$ has to satisfy the condition in Lemma \ref{lem:standard_image} and $\eta_\mathrm{r}$ is determined in Lemma \ref{lem:tau_reg}. Both depend on the reference sequence $\widetilde T_1,\dots, \widetilde T_Q$, but not on the choice of $(T_i)$.},
\beq\label{eq:regular_density}
|\ln \rho(x) - \ln \rho(y)| \leq C_\mathrm{r} |W(x,y)|^{\eta_\mathrm{r}}
\eeq
for all $x,y\in W$.
In particular,
\beqn
\frac{\sup \rho}{\inf\rho} \leq e^{C_\mathrm{r}|W|^{\eta_\mathrm{r}}},
\eeqn
which implies $\inf\rho\geq \frac{1}{|W|}e^{-C_\mathrm{r}{L^{\eta_\mathrm{r}}}}\geq \frac{1}{L}e^{-C_\mathrm{r}{L^{\eta_\mathrm{r}}}}>0$, since $\int_W\rho\,dm_W =1$. Moreover, $\inf\rho\leq \frac{1}{|W|}$. If $W' \subset W$, we obtain by using the previous facts that
\beqn
e^{-C_\mathrm{r}L^{\eta_\mathrm{r}}} \leq \frac{|W|}{|W'|}\nu(W') \leq e^{C_\mathrm{r}L^{\eta_\mathrm{r}}} .
\eeqn
Hence, if $D=e^{2C_\mathrm{r}L^{\eta_\mathrm{r}}}$ and $W',W''\subset W$,
\beq\label{eq:density_comp}
D^{-1} \frac{\nu(W'')}{|W''|} \leq \frac{\nu(W')}{|W'|} \leq D\, \frac{\nu(W'')}{|W''|}.
\eeq

Formally, a standard family is a family  $\cG=\{(W_\alpha,\nu_\alpha)\}_{\alpha\in\fA}$ of standard pairs together with a probability factor measure $\lambda_\cG$ on the (possibly uncountable) index set $\fA$ and a probability measure $\mu_\cG$ satisfying
\beqn
\mu_\cG(B) = \int_{\fA} \nu_\alpha(B\cap W_\alpha) \,d\lambda_\cG(\alpha)
\eeqn
for each Borel measurable set $B\subset \cM$. The measure  $\mu_\cG$ is supported on $\cup_\alpha W_\alpha$ and
\beqn
\bE_{\cG}(f) = \int_{\cM}f \,d\mu_\cG = \int_\fA\int_{W_\alpha} f(x) \,d\nu_\alpha(x) d\lambda_\cG(\alpha)
\eeqn
for each Borel measurable function $f$ on $\cM$. In Theorem~\ref{thm:equidistr} we assume that a standard family is associated to a measurable partition.

A standard family can, for example, consist of just one standard pair $\{(W,\nu)\}$ and the Dirac point mass factor measure $\delta_{W}$. Another natural example of a standard family $\{(W_\alpha,\nu_\alpha)\}_{\alpha\in\fA}$ is obtained by considering an Anosov diffeomorphism and taking as $\{W_\alpha\}_{\alpha\in\fA}$ a measurable partition consisting of  unstable manifolds of bounded length and letting the Riemannian volume induce the factor measure and the conditional measures $\nu_\alpha$.

\subsection{Main contributions}\label{subsec:contributions}
A technical version of our main result is the following theorem. It states that for reasonable initial distributions $\mu_\cG$ and $\mu_\cE$, the images $\cT_n \mu_\cG$ and $\cT_n \mu_\cE$ converge exponentially in a weak sense.
\begin{thm}\label{thm:equidistr}
There exist constants $C>0$ and $0<\vartheta<1$, and $0<\lambda<1$, such that the following holds. For any standard families $\cG$ and $\cE$, any $\gamma>0$, and any $\gamma$-H\"older observable~$f$~\footnote{Given a sequence $(T_i)$, it is in fact enough to assume that $f$ is H\"older continuous along the finite-time stable leaves associated to that particular sequence; see Section~\ref{sec:foliations}.},
\beqn
\left|\int_{\cM} f\circ \cT_n\,d\mu_{\cG} - \int_{\cM} f\circ \cT_n\,d\mu_{\cE}\right| \leq B_f\theta_\gamma^n,  \qquad n\leq n_Q,
\eeqn
where, 
\beqn
B_f = C(\sup f-\inf f) +  {|f|}_\gamma \quad\text{and}\quad \theta_\gamma = \max(\vartheta, \lambda^\gamma)^{1/2}.
\eeqn
The various constants do not depend on the choice of the sequence $(T_i)$, in particular its length~$n_Q$, as long as the earlier assumptions hold and the numbers $N_q$ appearing in \eqref{eq:time} are large enough.
\end{thm}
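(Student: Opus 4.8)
The plan is to reduce Theorem~\ref{thm:equidistr} to the Coupling Lemma (to be proved later in Section~\ref{sec:coupling}, which we are free to assume). The central object is a coupling of two standard families $\cG$ and $\cE$: a way of pairing, after a controlled number of iterates, most of the mass of $\cT_n\mu_\cG$ with most of the mass of $\cT_n\mu_\cE$ along common finite-time stable leaves, so that along each such leaf the two pieces of mass sit at bounded distance and can be transported into each other via the finite-time holonomy maps of Section~\ref{sec:foliations}. The Coupling Lemma should provide exactly this: after pushing forward, there is a decomposition of (a large fraction of) $\mu_\cG$ and $\mu_\cE$ into conditional measures on matched stable curves, together with a ``tail'' or ``uncoupled'' remainder whose total mass decays exponentially, at a rate governed by $\vartheta$; and the matched pairs of points lie on the same stable leaf, hence their forward images under $\cT_{m,n+1}$ contract at a rate controlled by $\lambda$.

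The main steps, in order, are as follows. First I would set up the dictionary between a general standard family and the hypotheses of the Coupling Lemma, checking that the pushforward $\cT_k\cG$ of a standard family is again a standard family (this is the role of Lemma~\ref{lem:standard_image} and the distortion/growth estimates of Section~\ref{sec:foliations}: curves grow to length $\geq \ell$, get subdivided to stay $\leq L$, flatten to curvature $\leq K$, and densities stay regular in the sense of \eqref{eq:regular_density}). Second, I would invoke the Coupling Lemma to obtain, for each time $n_0$, a partition of $\mu_\cG$ and $\mu_\cE$ into a coupled part and an exceptional part of mass $O(\vartheta^{n_0})$, with the coupled part organized along matched finite-time stable leaves. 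Third, I would estimate
\beqn
\left|\int f\circ\cT_n\,d\mu_\cG - \int f\circ\cT_n\,d\mu_\cE\right|
\eeqn
by splitting at an intermediate time: for $n \geq 2n_0$ say, the exceptional mass contributes at most $2(\sup f - \inf f)\,O(\vartheta^{n_0})$, while on each matched pair of stable curves the two coupled points have been iterated $n - n_0$ times along a stable leaf, so their images are within distance $O(\lambda^{n-n_0})$, contributing at most ${|f|}_\gamma\,O(\lambda^{\gamma(n-n_0)})$ after integrating the $\gamma$-H\"older bound. Fourth, I would optimize over the split: choosing $n_0 \approx n/2$ balances $\vartheta^{n_0}$ against $\lambda^{\gamma(n-n_0)}$ and produces the stated rate $\theta_\gamma = \max(\vartheta,\lambda^\gamma)^{1/2}$ and the stated $B_f = C(\sup f - \inf f) + {|f|}_\gamma$; the constant $C$ absorbs the holonomy Jacobian bounds and the combinatorial constants from subdividing long curves. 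Since all the quantitative inputs ($\ell, L, K, C_\mathrm{r}, \eta_\mathrm{r}$, the contraction and distortion constants, and the constants in the Coupling Lemma) depend only on the reference diffeomorphisms $\widetilde T_1,\dots,\widetilde T_Q$ and hold uniformly once the $N_q$ are large, the conclusion is automatically uniform in the sequence $(T_i)$ and its length $n_Q$.

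One technical point deserving care throughout is the finite-time nature of the setup: the stable foliations and holonomies are only defined up to the horizon $n_Q$ (this is why $T_n$ is extended by $\widetilde T_Q$ beyond $n_Q$), so the splitting argument in the third step must be carried out with $n \leq n_Q$ fixed from the start, and the ``stable leaf'' through a coupled pair is really the finite-time stable leaf for the truncated sequence. I expect the genuinely hard part to be none of the above bookkeeping but rather the Coupling Lemma itself — constructing the coupling so that the uncoupled mass decays exponentially uniformly in $(T_i)$, which requires the growth-and-recovery mechanism for short unstable curves together with a non-stationary analogue of the ``magnet''/return argument; that is deferred to Section~\ref{sec:coupling_proof}, and here I would simply cite it.
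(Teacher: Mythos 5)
Your proposal is correct and follows essentially the same route as the paper: both reduce the theorem to the Coupling Lemma, split the integral at coupling time $\le n/2$ versus $>n/2$, bound the uncoupled part by $(\sup f-\inf f)$ times the exponential tail bound, and bound the coupled part via the $\gamma$-H\"older property of $f$ together with the exponential contraction $\ell_0\lambda^{\,n-\Upsilon}$ along the finite-time stable leaves, yielding $\theta_\gamma=\max(\vartheta,\lambda^\gamma)^{1/2}$. The only cosmetic difference is that you phrase the coupling as a coupled/exceptional mass decomposition at an intermediate time $n_0\approx n/2$, whereas the paper works directly with the coupling map $\Theta$ and the coupling time function $\Upsilon$; these are equivalent formulations.
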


As a consequence of Theorem~\ref{thm:equidistr}, we prove the earlier Theorem~\ref{thm:weak_conv}, which is stated in terms of less technical notions and is closer in spirit to the weak convergence of probability theory.

The proofs of Theorems~\ref{thm:equidistr} and~\ref{thm:weak_conv} rely on a coupling method that has its roots in probability theory. It was carried over to the study of dynamical systems by Lai-Sang Young \cite{Young,MasmoudiYoung} who used it to prove exponential decay of correlations for Sinai Billiards and uniqueness of invariant measures for randomly perturbed dissipative parabolic PDEs. Bressaud and Liverani \cite{BressaudLiverani} also used coupling to give explicit estimates on the decay of correlations for Anosov diffeomorphisms. The present paper takes advantage of a version of Young's coupling method introduced by Dmitry Dolgopyat and Nikolai Chernov \cite{ChernovDolgopyatBBM,Chernov-BilliardsCoupling}.

A considerable amount of work is devoted to obtaining \emph{uniform bounds}, which is more involved than in the case of iterating a single map. A central issue is that the finite sequences $(T_1,\dots,T_{n_Q})$ of maps that we consider do not possess stable and unstable manifolds, because defining such objects requires an infinite future and an infinite past, respectively. Thus, we have to resort to artificial, finite-time, foliations that describe the dynamics sufficiently faithfully but are by no means unique. Moreover, in the single map case the regularity properties of the foliations of the manifold into stable and unstable manifolds play an important role. Our construction should therefore also yield regular foliations. In addition, the amount of regularity must not depend on the choice of the sequence $(T_1,\dots,T_{n_Q})$ (as long as $Q\geq 1$ and the maps $\widetilde T_q$, $1\leq q\leq Q$, have been fixed and the earlier assumptions are satisfied), since the goal is to prove the uniform convergence result in Theorem~\ref{thm:equidistr}.  

At the heart of Dolgopyat's and Chernov's method lies the Coupling Lemma (corresponding to Lemma~\ref{lem:coupling}). In its proof, one constructs a special reference set called the magnet. By mixing, any standard pair will ultimately cross the magnet as if it was attracted by the latter. Once two standard pairs cross the magnet, parts of them can be coupled to each other using the stable foliation. In this paper, we generalize the idea by considering time-dependent magnets and time-dependent, finite-time, foliations for the coupling construction.


\section{Distortions and holonomy maps}\label{sec:foliations}
\subsection{Stable foliations $\cW^n$}
As pointed out above, there is no well-defined sequence of stable foliations associated to the finite sequence $(T_1,\dots,T_{n_Q})$ of maps. A way around this is to try to augment the sequence with a fake future consisting of infinitely many maps --- in our case $T_n = \widetilde T_Q$ for $n>n_Q$ --- and to consider the uniquely defined stable foliations of the resulting infinite sequence of maps. This sequence of stable foliations naturally depends on the chosen future and it is not \textit{a priori} clear whether they have very much to do with the finite-time dynamics ($1\leq n\leq n_Q$) which is the only thing we are interested in. 

For a sequence $(T_i)$ satisfying the earlier assumptions, we can define a sequence of stable distributions on the manifold $\cM$, by pulling back the stable distribution $E_{Q,x}^s$ of $\widetilde T_Q$. More precisely, let us first define $E_x^{n} = E_{Q,x}^s$ for $n\geq n_Q+1$ and then
\beqn
E_x^n = D_{T_{n+1} x} T_{n+1}^{-1}E_{T_{n+1} x}^{n+1},  \qquad 0\leq n\leq n_Q.
\eeqn
With this definition,
\beqn
D_x \cT_{n,m} E_x^{m-1} = E_{\cT_{n,m} x}^n,\qquad n\geq m\geq 1.
\eeqn
Assumptions (A1) and (A3) guarantee that $E_x^n \subset \{0\}\cup \operatorname{int} \cC^{s}_{q,x}$ for $n+1\in I_q$  and $1\leq q\leq Q$. By Assumption (A4), the angle between $E_x^n$ and $E_{q,x}^s$ can be assumed uniformly small,  for $n\in I_q$ and $1\leq q\leq Q$.

The distributions $E^n$ above are the tangent distributions to the stable foliations $\cW^n$ of the sequences $(T_i)_{i> n}$. If $\cW^s_{Q,x}$ is the stable leaf of $\widetilde T_Q$ at $x$, then $\cW^{n}_x = \cW^s_{Q,x}$ for $n\geq n_Q+1$ and 
\beqn
\cW^n_x = T_{n+1}^{-1} \cW^{n+1}_{T_{n+1} x},  \qquad 0\leq n\leq n_Q.
\eeqn
Notice that $y\in \cW^n_x$ if and only if $\lim_{N\to\infty}d(\cT_{N,n+1}x,\cT_{N,n+1}y)=0$. 

For technical reasons, we also define $F_x^n = E_{1,x}^u$ for $n\leq 0$ and then
\beq\label{eq:fake_unstable}
F_x^n = D_{T_n^{-1} x} T_n F^{n-1}_{T_n^{-1} x},  \qquad 1\leq n\leq n_Q.
\eeq
Assumptions (A1) and (A3) guarantee that $F_x^n \subset \{0\}\cup \operatorname{int} \cC^{u}_{q,x}$ for $n\in I_q$ and $1\leq q\leq Q$. By Assumption (A4), the angle between $F_x^n$ and $E_{q,x}^u$ can be assumed uniformly small,  for $n\in I_q$ and $1\leq q\leq Q$. The distributions $F^n$ are in fact the unstable distributions of the sequence $(T_i)$ augmented with the past $T_i=\widetilde T_1$ for $i\leq 0$. They serve as \emph{H\"older continuous} reference distributions that allow us to accurately compare different unstable vectors.

In Appendix~\ref{app:Holder} we show that the distributions $F^n$ and $E^n$ for all $n$ are uniformly H\"older continuous.


\subsection{Distortion}
It is necessary to control the distortion and growth of curves under maps $T$. Given a curve $W$ we denote by $\cJ_WT$ the Jacobian of the restriction of $T$ to $W$. If $v$ is any nonzero tangent vector of $W$ at $x$, then
\beqn
\cJ_WT(x) = \frac{\norm{D_x T v}}{\norm v} .
\eeqn
\begin{lem}[Growth of unstable curves]\label{lem:growth_loc}
Fix $1\leq q\leq Q$ and let $T_i\in \cU_q$ for each $i$. If $W$ is an unstable curve with respect to $\{\cC^u_{q,x}\}$, setting $\bar\Lambda_q = \sup_x \norm{D_x\widetilde T_q}+\ve_q$,
\beq\label{eq:growth_bound}
C_q\Lambda_q^n|W| \leq |\cT_n W| \leq
\bar\Lambda_q^n|W|.
\eeq
If $W,\cT_1W,\dots, \cT_nW$ are stable curves with respect to $\{\cC^s_{q,x}\}$, then 
\beq\label{eq:contraction_bound}
 |\cT_n W| \leq \frac{|W|}{C_q\Lambda_q^n}.
\eeq
\end{lem}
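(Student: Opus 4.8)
The plan is to estimate the Jacobian $\cJ_WT_i$ pointwise along the curve and then telescope over $i=1,\dots,n$. First I would fix $x\in W$ and a unit tangent vector $v\in T_xW\subset\cC^u_{q,x}$. Since the pushed-forward tangent vectors $D_x\cT_{i-1}v$ remain in the unstable cones $\cC^u_{q,\cT_{i-1}x}$ (this is Assumption (A1), applied iteratively, keeping us in the cone family of the same $q$ throughout, since all $T_i\in\cU_q$), the quantity $\cJ_{\cT_{i-1}W}T_i(\cT_{i-1}x)=\norm{D_{\cT_{i-1}x}T_i\,(D_x\cT_{i-1}v)}/\norm{D_x\cT_{i-1}v}$ is well defined and, by the chain rule,
\beqn
\cJ_W\cT_n(x)=\prod_{i=1}^n \cJ_{\cT_{i-1}W}T_i(\cT_{i-1}x)=\frac{\norm{D_x\cT_n v}}{\norm v}.
\eeqn
Then I would integrate over $W$: since $|\cT_nW|=\int_W \cJ_W\cT_n\,dm_W$, the bound \eqref{eq:growth_bound} follows as soon as I bound $\cJ_W\cT_n(x)$ above and below uniformly in $x$.

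For the lower bound $\cJ_W\cT_n(x)\geq C_q\Lambda_q^n$, I would invoke Assumption (A2) directly: $\norm{D_x\cT_n v}\geq C_q\Lambda_q^n\norm v$ for $v\in\cC^u_{q,x}$, which is exactly the hypothesis (with all $T_i\in\cU_q$ for the fixed $q$). For the upper bound I would use that, for each single map $T_i\in\cU_q=\bD(\widetilde T_q,\ve_q)$, one has $\norm{D_yT_i}\le\sup_z\norm{D_z\widetilde T_q}+\ve_q=\bar\Lambda_q$ for every $y$ (this is just the triangle inequality in the $\cC^1$ part of the $\cC^2$ metric, using compactness of $\cM$ to make the $\ve_q$-control uniform in the base point); multiplying these $n$ single-step bounds gives $\cJ_W\cT_n(x)\le\bar\Lambda_q^n$. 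Integrating both inequalities against $dm_W$ over $W$ yields \eqref{eq:growth_bound}.

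For the contraction estimate \eqref{eq:contraction_bound}, the hypothesis is that $W,\cT_1W,\dots,\cT_nW$ are all stable curves with respect to $\{\cC^s_{q,x}\}$, so a unit tangent vector $v$ of $W$ at $x$ lies in $\cC^s_{q,x}$, and its images $D_x\cT_iv$ are tangent to $\cT_iW$, hence lie in $\cC^s_{q,\cT_ix}$. Thus I may apply the second half of Assumption (A2): $\norm{D_y\cT_n^{-1}w}\ge C_q\Lambda_q^n\norm w$ for $w\in\cC^s_{q,y}$. Taking $y=\cT_nx$ and $w=D_x\cT_nv\in\cC^s_{q,\cT_nx}$, and noting $D_y\cT_n^{-1}w=v$, gives $\norm v\ge C_q\Lambda_q^n\norm{D_x\cT_nv}$, i.e. $\cJ_W\cT_n(x)=\norm{D_x\cT_nv}/\norm v\le (C_q\Lambda_q^n)^{-1}$; integrating over $W$ gives \eqref{eq:contraction_bound}.

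The only genuinely delicate point — the ``main obstacle,'' though it is more bookkeeping than difficulty — is making sure that the invariance-of-cones steps are legitimate: one must check that applying $T_i\in\cU_q$ keeps tangent vectors of the successively pushed curves inside the \emph{same} cone family $\cC^u_{q,\slot}$ (so that (A1) and (A2) both stay applicable for the full stretch $i=1,\dots,n$), and symmetrically that the stable cones $\cC^s_{q,\slot}$ are preserved under $T_i^{-1}$, which here we have assumed directly via the hypothesis that every $\cT_iW$ is a stable curve w.r.t. $\{\cC^s_{q,x}\}$. Uniformity in $x$ of the single-step bound $\bar\Lambda_q$ needs the compactness of $\cM$ and the $\cC^2$ (hence $\cC^1$) closeness of $T_i$ to $\widetilde T_q$; this is the same compactness argument used in the proof of Lemma~\ref{lem:ass}.
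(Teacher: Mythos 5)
Your proposal is correct and follows essentially the same route as the paper: write $|\cT_nW|=\int_W\norm{D_x\cT_n v_x}\,dm_W(x)$ for unit tangents $v_x$, then use Assumption (A2) for the lower bound and for the stable-case contraction, and the single-step bound $\norm{D_yT_i}\leq\bar\Lambda_q$ for the upper bound. The paper's proof is just a terser version of the same argument, so no further comment is needed.
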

\begin{proof}
Since $|\cT_n W| = \int_{\cT_n W}\,dm_{\cT_n W} = \int_W\cJ_W\cT_n\, dm_W = \int_W\norm{D_x\cT_n v_x}\, dm_W(x) $, where $v_x$ is a unit vector tangent to $W$ at $x$, it suffices to  observe that $C_q\Lambda_q^n\leq \norm{D_x\cT_n v_x}\leq \bar\Lambda_q^n$ in the ``unstable case'' and $\norm{D_x\cT_n v_x}\leq \frac{1}{C_q\Lambda_q^n}$ in the ``stable case".
\end{proof}

\begin{lem}[Curvature of unstable curves]\label{lem:curvature}
Fix $1\leq q\leq Q$ and let $T_i\in \cU_q$ for each $i$. There exist $K_1$ and, for any $K'>0$, $K_2(K')$ and $n_\kappa(K')$, such that
\beqn
\kappa(\cT_n W) \leq 
\begin{cases}
K_1, & n\geq n_\kappa, \\
K_2, & n\geq 0,
\end{cases}
\eeqn
holds if $W$ is an unstable curve with respect to $\{\cC^u_{q,x}\}$ and $\kappa(W)\leq K'$. Notice that  $K_1\leq K_2$ is independent of $K'$.
\end{lem}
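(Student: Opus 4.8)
\emph{Plan.} The idea is to obtain a one-step recursion for the curvature, iterate it over blocks of a suitably large fixed length, and then sum a geometric series. Note first that, by Assumption (A1), each $\cT_k W$ is again an unstable curve with respect to $\{\cC^u_{q,x}\}$, so all cone bounds remain available along the whole orbit. For a smooth curve $W$, a $\cC^2$ diffeomorphism $T$, and $y\in W$ with unit tangent $u$ and unit normal $\nu$, parametrizing $TW$ by arc length and differentiating twice at $Ty$ gives, using $\abs{\det(D_yTu,\,D_yT\nu)}=\abs{\det D_yT}$,
\[
\kappa(TW)(Ty)\ \le\ \frac{\norm{D^2_yT}}{\norm{D_yTu}^{2}}\ +\ \kappa(W)(y)\,\frac{\abs{\det D_yT}}{\norm{D_yTu}^{3}}.
\]
Applying this successively to $W,\cT_1W,\cT_2W,\dots$, the chain rule collapses the product of the multiplicative factors, so that for every $x\in W$ and all $k\ge 0$, $m\ge 1$,
\[
\kappa(\cT_{k+m}W)(\cT_{k+m}x)\ \le\ \Delta_{q,m}\ +\ \frac{\abs{\det D_{\cT_k x}\cT_{k+m,k+1}}}{\norm{D_{\cT_k x}\cT_{k+m,k+1}u_k}^{3}}\ \kappa(\cT_k W)(\cT_k x),
\]
where $u_i$ denotes the unit tangent to $\cT_i W$ at $\cT_i x$. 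The constant $\Delta_{q,m}$ gathers the at most $m$ additive terms; it is finite and uniform in $x$ and in the sequence because $\cU_q=\bD(\widetilde T_q,\ve_q)$ is compact in the $\cC^2$ topology (so $\sup_{T\in\cU_q}\norm{D^2T}<\infty$) and $\norm{D_xTv}$ for $T\in\cU_q$, $v\in\cC^u_{q,x}$, is bounded below by a positive constant.

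\emph{The key estimate.} Everything reduces to showing
\[
\frac{\abs{\det D_{\cT_k x}\cT_{k+m,k+1}}}{\norm{D_{\cT_k x}\cT_{k+m,k+1}u_k}^{3}}\ \le\ C^{*}_q\,\rho_q^{\,m},\qquad \rho_q\in(0,1),
\]
with $C^{*}_q,\rho_q$ independent of $k$, $x$, and the sequence. The denominator is at least $(C_q\Lambda_q^{m})^{3}$ by Assumption (A2). For the numerator I use the invariant splitting $E^u_q\oplus E^s_q$ of $\widetilde T_q$: writing $\cJ^u_q(x),\cJ^s_q(x)$ for the one-step expansion, resp.\ contraction, of $\widetilde T_q$ along $E^u_{q,x}$, resp.\ $E^s_{q,x}$, and $\phi(x)$ for the (uniformly positive) angle between them, an area computation gives $\abs{\det D_x\widetilde T_q}=\cJ^u_q(x)\cJ^s_q(x)\sin\phi(\widetilde T_q x)/\sin\phi(x)$, whence, since $D_xT=D_x\widetilde T_q+\cO(\ve_q)$, $d(Tx,\widetilde T_q x)=\cO(\ve_q)$, and $\sin\phi$ is H\"older and bounded away from $0$,
\[
\abs{\det D_xT}\ =\ \cJ^u_q(x)\,\cJ^s_q(x)\,\frac{\sin\phi(Tx)}{\sin\phi(x)}\,\bigl(1+\cO(\ve_q)\bigr),\qquad T\in\cU_q .
\]
Over the block $k+1,\dots,k+m$ the $\sin\phi$ factors telescope to a single bounded ratio; the product of the $\cJ^s_q$'s along the pseudo-orbit $(\cT_i x)$ is $\le\widetilde C_q^{-1}\widetilde\Lambda_q^{-m}e^{\cO(\ve_q)m}$, after replacing that pseudo-orbit by a genuine $\widetilde T_q$-orbit via the shadowing property of the Anosov map $\widetilde T_q$ (uniformly in the length); and, because the cones are \emph{narrow} (Assumption (A4)), $\norm{D_{\cT_i x}T_{i+1}u_i}=\cJ^u_q(\cT_i x)(1+\cO(\ve_q+a_q))$, so the product of the $\cJ^u_q$'s equals $\norm{D_{\cT_k x}\cT_{k+m,k+1}u_k}$ up to a factor $e^{\cO(\ve_q+a_q)m}$. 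Cancelling, two of the three powers of the expansion disappear and only one power remains, leaving a bound of the shape $(\widetilde C_q C_q^{2}\min_x\sin\phi(x))^{-1}\cdot e^{\cO(\ve_q+a_q)m}/(\widetilde\Lambda_q\Lambda_q^{2})^{m}$. Since $\widetilde\Lambda_q\Lambda_q^{2}>1$, shrinking $\ve_q$ and $a_q$ first (as permitted by the Convention) makes $\rho_q:=e^{\cO(\ve_q+a_q)}/(\widetilde\Lambda_q\Lambda_q^{2})<1$, which proves the claim.

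\emph{Conclusion.} With the key estimate in hand, fix $m=m_q$ so that $C^{*}_q\rho_q^{\,m_q}\le\tfrac12$. The two displays of the first paragraph then give $\kappa(\cT_{k+m_q}W)\le\Delta_{q,m_q}+\tfrac12\,\kappa(\cT_k W)$ for all $k\ge 0$; iterating from $k=0$ yields $\kappa(\cT_{j m_q}W)\le 2\Delta_{q,m_q}+2^{-j}\kappa(W)$. Filling in the fewer than $m_q$ intermediate steps, during which the curvature is multiplied by at most a fixed constant, gives $\kappa(\cT_n W)\le K_2$ for all $n\ge0$ with $K_2=K_2(K')$ affine in $K'=\kappa(W)$, and — once $j$ is large enough that $2^{-j}K'\le\Delta_{q,m_q}$ — gives $\kappa(\cT_n W)\le K_1$ for all $n\ge n_\kappa(K')$, where $K_1$ depends only on $\widetilde T_q$ and hence neither on $K'$ nor on the sequence; if necessary, one redefines $K_2(K'):=\max(K_1,K_2(K'))$ so that $K_1\le K_2$ always. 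The main obstacle is the key estimate, and within it the order of quantifiers: the per-step errors of size $\cO(\ve_q+a_q)$ compound to $e^{\cO(\ve_q+a_q)m}$, so $\ve_q$ and $a_q$ must be shrunk \emph{before} $m_q$ is selected, exploiting that the idealized contraction rate $\widetilde\Lambda_q^{-3}$ leaves room to spare; the mechanism that makes this succeed is that in $\abs{\det}$ the unstable Jacobian enters only once while it is cubed in the denominator, so after cancelling two powers the honest stable contraction $\prod\cJ^s_q\le\widetilde C_q^{-1}\widetilde\Lambda_q^{-m}$ is set against just one power of the expansion and wins.
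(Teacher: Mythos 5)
Your argument is correct in substance, but it reaches the key contraction by a genuinely different route than the paper. The paper differentiates the unit tangent of the image curve directly, so the coefficient multiplying the old curvature is $\norm{D\cT_n(u\cdot\nabla u)}/\norm{D\cT_n u}^2$, which Lemma~\ref{lem:exp_comp} bounds by $C_\#(C_q\Lambda_q^n)^{-1}\norm{u\cdot\nabla u}$: only one power of the expansion is needed, and neither determinants, nor H\"older regularity of the splitting, nor shadowing enter; the same block-recursion and geometric series then finish as in your conclusion. You instead take the one-step factor $\abs{\det DT}/\norm{DTu}^{3}$, expand the determinant as $\cJ^u_q\cJ^s_q$ times an angle ratio, and control $\prod\cJ^s_q$ along the orbit of the composition by shadowing it with a genuine $\widetilde T_q$-orbit plus H\"older continuity of $E^{s}_q$. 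This works, and the quantifier issue you flag (shrink $\ve_q$ and $a_q$ before fixing the block length $m_q$) is legitimately available via (A4), Lemma~\ref{lem:ass} and the Convention; but it imports heavier machinery and extra smallness requirements than the assumptions force. Note that you could have shortcut your own key estimate without shadowing: in two dimensions $\abs{\det D\cT_m}\le\norm{D\cT_m u}\,\norm{D\cT_m \nu}\le C_\#\norm{D\cT_m u}^{2}$ by Lemma~\ref{lem:exp_comp}, so the block factor is at most $C_\#/\norm{D\cT_m u}\le C_\#(C_q\Lambda_q^m)^{-1}$ --- exactly the paper's mechanism, with no appeal to $\widetilde\Lambda_q$ at all. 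Two cosmetic points: the $\cC^2$ ball $\cU_q$ is not compact (uniform boundedness of $\norm{D^2T}$ on it is all you need, and it is immediate); and since the paper's $\kappa$ is the ambient quantity $\norm{u\cdot\nabla u}$ for $\cM\subset\bR^M$ rather than the geodesic curvature appearing in your planar one-step formula, the bounded discrepancy (second fundamental form of $\cM$) should be absorbed into the additive constant $\Delta_{q,m}$; likewise the per-step H\"older errors are $\cO(\ve_q^{\alpha})$ rather than $\cO(\ve_q)$, which changes nothing.
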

\begin{remark}
We can now fix some $K'$ and set $K=K_2(K')$ in the definition of standard pairs. In particular, this means that any unstable curve $W$ with length between $\ell$ and $L$ and curvature $\kappa(W)\leq K'$ is a standard curve.
\end{remark}
\begin{proof}[Proof of Lemma~\ref{lem:curvature}]
Let $W$ be an unstable curve and $\gamma$ its parametrization by arc length, such that $u(x)=\dot\gamma(t)\in\cC^u_{q,x}$ with $x=\gamma(t)$. Note $\|u\|=1$. The curvature of $W$ at $x$ is the length of
\beqn
\ddot \gamma(t)= u(x)\cdot\nabla u(x).
\eeqn
Setting $V=D\cT_n u$, $v(y)=V(x) / \|V(x)\|$ is the unit tangent of $\cT_nW$ at $y=\cT_nx$. The curvature of $\cT_nW$ at $y$ is thus obtained from
\beqn
\begin{split}
v(y)\cdot \nabla v(y) &= Dv(y)v(y) = \|V(x)\|^{-1} Dv(y)D\cT_n(x)u(x) =  \|V(x)\|^{-1} D_x(v(y))u(x). 
\end{split}
\eeqn  
Here the chain rule $D_x(v(y)) = Dv(y)D\cT_n(x)$ was used. Now
\beqn
\begin{split}
 D_x(v(y)) u(x) &= D(\|V(x)\|^{-1}V(x)) u(x) = \|V(x)\|^{-1}DV(x) u(x) + V(x) D(\|V(x)\|^{-1}) u(x)
 \\
&= \|V(x)\|^{-1}DV(x) u(x) + V(x) \left( -\|V(x)\|^{-3}V(x)\cdot DV(x) \right) u(x)
 \\
&= \|V(x)\|^{-1}DV(x) u(x) - \|V(x)\|^{-1}v(y) \left(v(y)\cdot DV(x) u(x)\right),
 \end{split}
\eeqn
such that
\beqn
v(y)\cdot \nabla v(y) =  \|V(x)\|^{-2}\left( DV(x) u(x) - v(y) \left(v(y)\cdot DV(x) u(x)\right) \right),
\eeqn
or compactly
\beq\label{eq:curv_image}
v\cdot \nabla v =  \|V\|^{-2}\left( DV u - v \left(v\cdot DVu\right) \right).
\eeq
Notice that $DV u - v \left(v\cdot DVu\right)$ is the component of $DVu$ orthogonal to $v$ and hence $\| DV u - v \left(v\cdot DVu\right)  \| \leq \|DVu\|$. Furthermore, as $Du\,u = u\cdot\nabla u$, which we recognize to be the curvature of $W$ at $x$, we have
\beq\label{eq:DVu}
DVu =  D^2\cT_n(u,u)+D\cT_n(u\cdot\nabla u).
\eeq
Using Lemma~\ref{lem:exp_comp} and $\|D\cT_nu\|\geq C_q\Lambda_q^n\|u\|$, we see from \eqref{eq:curv_image} and \eqref{eq:DVu} that
\beq\label{eq:curvature_rec}
\begin{split}
\| v\cdot \nabla v \| & \leq  \|V\|^{-2} \| DVu \| \leq \frac{\|D^2\cT_n(u,u)\|}{\|D\cT_nu\|^2}+ \frac{\|D\cT_n(u\cdot\nabla u)\|}{\|D\cT_nu\|^2} 
\\
& \leq (C_q\Lambda_q^n)^{-2} {\norm{D^2\cT_n}}_\infty + (C_q\Lambda_q^n)^{-1} C_\#\|u\cdot\nabla u\|.
\end{split}
\eeq
Fix an $N$ such that $(C_q\Lambda_q^N)^{-1} C_\#<1$. Iterating \eqref{eq:curvature_rec},
\beqn
\kappa(\cT_{kN+l} W) \leq \frac{(C_q\Lambda_q^N)^{-2}{\sup_{(T_i)}\norm{D^2\cT_N}}_\infty}{1-(C_q\Lambda_q^N)^{-1} C_\#}  +((C_q\Lambda_q^N)^{-1} C_\#)^k  \kappa(\cT_l W).
\eeqn
A uniform bound $\max_{0\leq l<N}\kappa(\cT_l W)\leq a+b\cdot\kappa(W)$ is also obtained, so we are done. 
\end{proof}

If $W$ carries a measure $\nu$ with density $\rho$, then $\cT_n W$ carries the measure $\cT_n \nu$ whose density, which we denote $\cT_n\rho$, is
\beqn
(\cT_n\rho)(x) = \frac{\rho(\cT^{-1}_n x)}{\cJ_{W}\cT_n (\cT^{-1}_n x)} = \cJ_{\cT_nW} \cT^{-1}_n(x)\cdot\rho(\cT^{-1}_n x).
\eeqn
For controlling the regularity of such densities, we have the following result.
\begin{lem}[Distortion bound]\label{lem:distortion_loc}
Fix $1\leq q\leq Q$ and let $T_i\in \cU_q$ for each $i$. If $\cT_n^{-1}W$ is a standard unstable curve with respect to $\{\cC^u_{q,x}\}$ for all $0\leq n\leq N$ and if $x,y\in W$, then 
\beqn
\left |\ln\frac{\cJ_W \cT_n^{-1}(x)}{\cJ_W \cT_n^{-1}(y)} \right| \leq C_{\mathrm{d},q} \! \left| W(x,y)\right|, \qquad n\leq N.
\eeqn
Here $C_{\mathrm{d},q}>0$ is a constant that do not depend on $W$ or the choice of $(T_i)$.
\end{lem}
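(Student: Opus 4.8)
The plan is to expand $\cJ_W\cT_n^{-1}$ as a telescoping product of one-step Jacobians, bound the oscillation of each factor by the length of the corresponding sub-curve, and sum the resulting geometric series using the uniform expansion in Assumption (A2). I would fix $n\le N$, set $U=\cT_n^{-1}W$ (a standard unstable curve with respect to $\{\cC^u_{q,x}\}$ by hypothesis), and introduce the intermediate curves $W^{(k)}=\cT_k U$ together with the points $x^{(k)}=\cT_k\cT_n^{-1}x$, $y^{(k)}=\cT_k\cT_n^{-1}y$ for $0\le k\le n$, so that $W^{(0)}=U$, $W^{(n)}=W$, $x^{(n)}=x$, $y^{(n)}=y$. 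These are exactly the curves the chain rule for $\cT_n^{-1}=T_1^{-1}\circ\cdots\circ T_n^{-1}$ produces, since $W^{(k)}=\cT_{n,k+1}^{-1}W$; what matters is that each $W^{(k)}$ is an unstable curve with respect to $\{\cC^u_{q,x}\}$ with $\kappa(W^{(k)})\le K$ — the former because $U$ is unstable and (A1) propagates this forward under the maps $T_i\in\cU_q$, the latter because $U$ is \emph{standard}, so $\kappa(\cT_k U)\le K$ for every $k\ge 0$. The chain rule, together with $\cJ_{W^{(k)}}T_k^{-1}(z)=1/\cJ_{W^{(k-1)}}T_k(T_k^{-1}z)$ and $T_k^{-1}x^{(k)}=x^{(k-1)}$, yields
\beqn
\ln\frac{\cJ_W\cT_n^{-1}(x)}{\cJ_W\cT_n^{-1}(y)}=-\sum_{k=1}^n\ln\frac{\cJ_{W^{(k-1)}}T_k(x^{(k-1)})}{\cJ_{W^{(k-1)}}T_k(y^{(k-1)})}.
\eeqn

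The second ingredient is a routine one-step distortion estimate: there is a constant $C_\ast=C_\ast(q)>0$, independent of $(T_i)$, so that for every $T\in\cU_q$, every unstable curve $V$ with respect to $\{\cC^u_{q,x}\}$ with $\kappa(V)\le K$, and all $z,w\in V$,
\beqn
\left|\ln\frac{\cJ_V T(z)}{\cJ_V T(w)}\right|\le C_\ast\,\abs{V(z,w)}.
\eeqn
To see this, parametrize $V$ by arc length as $\xi(t)$ with $\dot\xi(t)\in\cC^u_{q,\xi(t)}$, $\norm{\dot\xi}\equiv1$; then $\cJ_V T(\xi(t))=\norm{D_{\xi(t)}T\dot\xi(t)}$ and $\tfrac{d}{dt}\bigl(D_{\xi(t)}T\dot\xi(t)\bigr)=D^2_{\xi(t)}T(\dot\xi,\dot\xi)+D_{\xi(t)}T\ddot\xi(t)$ with $\norm{\ddot\xi(t)}\le\kappa(V)\le K$. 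Since $\cU_q$ is a small $\cC^2$-neighborhood of $\widetilde T_q$, the norms $\norm{DT}_\infty$ and $\norm{D^2T}_\infty$ are bounded uniformly over $T\in\cU_q$, and by (A2) with $n=1$ one has $\norm{D_xTv}\ge C_q\Lambda_q\norm{v}$ for $v\in\cC^u_{q,x}$; hence $\bigl|\tfrac{d}{dt}\ln\cJ_V T(\xi(t))\bigr|$ is bounded by an explicit constant $C_\ast$, and integrating in $t$ gives the claim.

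Applying this with $V=W^{(k-1)}$, $z=x^{(k-1)}$, $w=y^{(k-1)}$ bounds the $k$-th summand by $C_\ast\abs{W^{(k-1)}(x^{(k-1)},y^{(k-1)})}$. Since $\cT_{n,k}=T_n\circ\cdots\circ T_k$, a composition of $n-k+1$ maps of $\cU_q$, carries the unstable sub-curve $W^{(k-1)}(x^{(k-1)},y^{(k-1)})$ onto $W(x,y)$, Lemma~\ref{lem:growth_loc} gives $\abs{W^{(k-1)}(x^{(k-1)},y^{(k-1)})}\le\abs{W(x,y)}/(C_q\Lambda_q^{\,n-k+1})$, whence
\beqn
\left|\ln\frac{\cJ_W\cT_n^{-1}(x)}{\cJ_W\cT_n^{-1}(y)}\right|\le C_\ast\abs{W(x,y)}\sum_{k=1}^n\frac{1}{C_q\Lambda_q^{\,n-k+1}}\le\frac{C_\ast}{C_q(\Lambda_q-1)}\,\abs{W(x,y)},
\eeqn
and the lemma follows with $C_{\mathrm d,q}=C_\ast/\bigl(C_q(\Lambda_q-1)\bigr)$, a constant depending only on $q$ (through $\widetilde T_q$ and $\cU_q$) and not on $W$ or $(T_i)$.

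I do not expect a genuine obstacle here; the one point that calls for care is the bookkeeping in the first paragraph. The distortion of step $k$ is governed by the curvature of $\cT_{n,k}^{-1}W=T_k^{-1}\circ\cdots\circ T_n^{-1}W$, not of the curves $\cT_m^{-1}W=T_m^{-1}\circ\cdots\circ T_1^{-1}W$ that the hypothesis is phrased in terms of. Recognizing that the former are precisely the forward iterates $\cT_k U$ of the \emph{single} standard curve $U=\cT_n^{-1}W$, and invoking (A1) to keep them unstable and standardness to keep their curvature $\le K$, is exactly what makes the lone hypothesis ``$\cT_n^{-1}W$ is standard'' sufficient for the $k$-th factor; everything else is a mean-value estimate and a geometric sum.
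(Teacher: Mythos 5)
Your proposal is correct and follows essentially the same route as the paper: a one-step distortion estimate (arc-length parametrization, uniform $\cC^2$ bounds over $\cU_q$, curvature cap $K$, and the one-step expansion lower bound), then telescoping over the intermediate curves and summing the geometric series via Lemma~\ref{lem:growth_loc}. The only cosmetic difference is that you bound the oscillation of the forward one-step Jacobians while the paper works directly with the inverse one-step Jacobians (and additionally notes that $\cJ_W T^{-1}$ is $\cO(\ve_q)$-close to $\cJ_W \widetilde T_q^{-1}$, which is not needed for the final bound); the bookkeeping point you flag about the curves $\cT_k(\cT_n^{-1}W)$ is handled identically in the paper.
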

\begin{proof}
We first prove that the distortion factor of any map $T\in\cU_q$ is close to that of $\widetilde T_q$. To this end, let $W$ be an unstable curve with respect to $\{\cC^u_{q,x}\}$, $x\in W$, and $v$ a unit vector tangent to $W$ at $x$. Then
\beqn
\begin{split}
\left| \cJ_W T^{-1}(x) - \cJ_W \widetilde T_q^{-1}(x) \right| &= \left| \norm{D_x T^{-1} v} - \norm{D_x \widetilde T_q^{-1} v}\right| \leq  \norm{(D_x T^{-1}  - D_x \widetilde T_q^{-1})v}
\\
&= \norm{D_x T^{-1} (D_{\widetilde T_q^{-1}x}\widetilde T_q-D_{T^{-1}x}T)D_x \widetilde T_q^{-1}v}
\\
&\leq C\norm{D_{\widetilde T_q^{-1}x}\widetilde T_q-D_{T^{-1}x}T}\cJ_W \widetilde T_q^{-1}(x) 
\leq C\ve_q\, \cJ_W \widetilde T_q^{-1}(x).
\end{split}
\eeqn
Next, let $\gamma$ parametrize $W(x,y)$ according to arc length. Because
\beqn
\begin{split}
\left| \frac{d}{dt}\cJ_W T^{-1} (\gamma(t))\right| &= \left| \frac{d}{dt}\! \left\| D_{\gamma(t)}T^{-1} \dot\gamma(t) \right\| \right| = \left| \frac{D_{\gamma(t)}T^{-1} \dot\gamma(t)}{\left\|D_{\gamma(t)}T^{-1} \dot\gamma(t)\right\|}\cdot \frac{d}{dt}\! \left( D_{\gamma(t)}T^{-1} \dot\gamma(t) \right) \right|
\\
& \leq \left\| \frac{d}{dt}\! \left( D_{\gamma(t)}T^{-1} \dot\gamma(t) \right) \right\| = \left\| D^2_{\gamma(t)}T^{-1} (\dot\gamma(t),\dot\gamma(t)) + D_{\gamma(t)}T^{-1} \ddot\gamma(t) \right\|
\\
& \leq \sup_x \! \left\| D_x^2 T^{-1} \right\| + \sup_x \! \left\| D_x T^{-1} \right\|\! \|\ddot\gamma(t)\|,
\end{split}
\eeqn
and because the curvature $\|\ddot\gamma\|\leq K$ for all standard unstable curves,
\beqn
\begin{split}
\left| \ln\cJ_W T^{-1}(x) - \ln\cJ_W T^{-1}(y) \right| &= \left| \int_0^{|W(x,y)|} \frac{d}{dt}\! \left(\ln  \cJ_W T^{-1} (\gamma(t)) \right) dt \right|
\\
& =  \left| \int_0^{|W(x,y)|}\! \frac{\frac{d}{dt}\cJ_W T^{-1} (\gamma(t))}{\cJ_W T^{-1} (\gamma(t))}\, dt \right|  \leq \widetilde C_{\mathrm{d},q} |W(x,y)| ,
\end{split}
\eeqn
where $\widetilde C_{\mathrm{d},q}$ is independent of the choice of $T$. The desired estimate follows. Indeed, writing $x^{-j} = (\cT_{n,n-j+1})^{-1}x$, $y^{-j} = (\cT_{n,n-j+1})^{-1}y$, and $W^{-j} = (\cT_{n,n-j+1})^{-1}W$ (with $\cT_{n,n+1}=\id$),
\beqn
\begin{split}
\left |\ln\frac{\cJ_W \cT_n^{-1}(x)}{\cJ_W \cT_n^{-1}(y)} \right| &\leq \sum_{j=0}^{n-1} \left |\ln\frac{\cJ_{W^{-j}} T_{n-j}^{-1}(x^{-j})}{\cJ_{W^{-j}} T_{n-j}^{-1}(y^{-j})} \right|  \leq \sum_{j=0}^{n-1} \widetilde C_{\mathrm{d},q} |W^{-j}(x^{-j},y^{-j})| .
\end{split}
\eeqn
Moreover, by \eqref{eq:growth_bound},
$ |W^{-j}(x^{-j},y^{-j})| \leq C_q^{-1}\Lambda_q^{-j} |W(x,y)|$.
\end{proof}

\subsection{Image of a standard family}\label{sec:standard_image}

\begin{lem}\label{lem:standard_image}
Fix $1\leq q\leq Q$ and let $T_i\in \cU_q$ for each $i$. Let $\cG=(W,\nu)$ be a standard pair with respect to $\{\cC^u_{q,x}\}$ and assume that $C_\mathrm{r}$ satisfies
\beqn
2C_\mathrm{d,q}L^{1-{\eta_\mathrm{r}}} \leq C_\mathrm{r}.
\eeqn
For $n\geq \ln\frac{2}{C_q}/\ln\Lambda_q$, denote by $W_i$ the (finitely many) standard pieces of the image $\cT_n W$ after it has been standardized by cutting into shorter pieces and split the image measure $\cT_n\nu$ into the sum $\sum_i c_i\nu_i$, where $\nu_i$ is a probability measure on $W_i$ and $\sum_i c_i=1$. 
Then each $(W_i,\nu_i)$ is a standard pair w.r.t.\ $\{\cC^u_{q,x}\}$.
\end{lem}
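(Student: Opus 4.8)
The plan is to verify, for each piece $(W_i,\nu_i)$, the three requirements of a standard pair: (i) the curve $W_i$ is unstable with respect to $\{\cC^u_{q,x}\}$; (ii) $\ell \leq |W_i| \leq L$ and $\kappa(\cT_m W_i)\leq K$ for all $m\geq 0$; and (iii) the density of $\nu_i$ with respect to $m_{W_i}$ satisfies the regularity bound \eqref{eq:regular_density} with the same constants $C_\mathrm{r}$ and $\eta_\mathrm{r}$. Requirement (i) is immediate from Assumption (A1): since $W$ is unstable with respect to $\{\cC^u_{q,x}\}$ and each $T_i\in\cU_q$, the image $\cT_n W$ has tangent spaces in the cones $\cC^u_{q,x}$, and this property is inherited by any subcurve. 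Requirement (ii) has two halves. The curvature bound $\kappa(\cT_m W_i)\leq K$ for all $m\geq 0$ follows from Lemma~\ref{lem:curvature}: $W$ is standard, hence has curvature $\leq K' $ along its whole forward orbit, so $\cT_n W$ has curvature $\leq K = K_2(K')$, and then any subcurve $W_i$ of $\cT_n W$ also has curvature $\leq K$ and continues to satisfy $\kappa(\cT_m W_i)\leq K$ for all $m\geq 0$ by the same lemma applied with $K' = K$ and using that $K_1\leq K_2=K$. The length upper bound $|W_i|\leq L$ is precisely what the standardization procedure enforces: we cut $\cT_n W$ into pieces of length between $\ell$ and $L$, which is possible as soon as $|\cT_n W|\geq \ell$; and by Lemma~\ref{lem:growth_loc}, $|\cT_n W|\geq C_q\Lambda_q^n|W|\geq C_q\Lambda_q^n\ell$, so the hypothesis $n\geq \ln(2/C_q)/\ln\Lambda_q$ gives $C_q\Lambda_q^n\geq 2$, hence $|\cT_n W|\geq 2\ell$, which leaves room to partition into pieces of length in $[\ell,L]$ (taking for granted $2\ell\leq L$, i.e. $\ell\leq L/2$, so that a valid partition exists — this is the standing relationship between the length caps).

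The main work — and the step I expect to be the main obstacle — is requirement (iii), propagating the density regularity \eqref{eq:regular_density} through both the pushforward by $\cT_n$ and the subsequent restriction-and-renormalization to each $W_i$. The density of $\cT_n\nu$ at a point $x\in\cT_n W$ is $(\cT_n\rho)(x) = \cJ_{\cT_n W}\cT_n^{-1}(x)\cdot\rho(\cT_n^{-1}x)$, so for $x,y$ on the same piece $W_i$,
\beqn
\left|\ln(\cT_n\rho)(x) - \ln(\cT_n\rho)(y)\right| \leq \left|\ln\frac{\cJ_{\cT_n W}\cT_n^{-1}(x)}{\cJ_{\cT_n W}\cT_n^{-1}(y)}\right| + \left|\ln\rho(\cT_n^{-1}x) - \ln\rho(\cT_n^{-1}y)\right|.
\eeqn
The first term is bounded by $C_{\mathrm{d},q}|(\cT_n W)(x,y)|$ using Lemma~\ref{lem:distortion_loc} (whose hypotheses hold: $\cT_n^{-1}(\cT_n W)$-preimages are standard unstable curves, being subcurves of the standard curve $W$ after appropriate cutting). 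The second term is bounded using \eqref{eq:regular_density} for $\rho$ together with the contraction estimate $|W^{-n}(x^{-n},y^{-n})| = |(\cT_n)^{-1}\text{-image}| \leq C_q^{-1}\Lambda_q^{-n}|(\cT_n W)(x,y)|$ from Lemma~\ref{lem:growth_loc}; since $\eta_\mathrm{r}\leq 1$ and the relevant curve has length at most $L$, this yields $C_\mathrm{r}(C_q^{-1}\Lambda_q^{-n})^{\eta_\mathrm{r}}|(\cT_n W)(x,y)|^{\eta_\mathrm{r}}$, with the prefactor $\leq C_\mathrm{r}$ once $n$ is large (as guaranteed, since $C_q^{-1}\Lambda_q^{-n}\leq 1/2<1$). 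Combining, and using $|(\cT_n W)(x,y)|\leq L$ so that $|(\cT_n W)(x,y)| = |(\cT_n W)(x,y)|^{1-\eta_\mathrm{r}}|(\cT_n W)(x,y)|^{\eta_\mathrm{r}} \leq L^{1-\eta_\mathrm{r}}|(\cT_n W)(x,y)|^{\eta_\mathrm{r}}$, we get
\beqn
\left|\ln(\cT_n\rho)(x) - \ln(\cT_n\rho)(y)\right| \leq \left(C_{\mathrm{d},q}L^{1-\eta_\mathrm{r}} + \tfrac12 C_\mathrm{r}\right)|(\cT_n W)(x,y)|^{\eta_\mathrm{r}} \leq C_\mathrm{r}\,|(\cT_n W)(x,y)|^{\eta_\mathrm{r}},
\eeqn
where the last inequality is exactly the hypothesis $2C_{\mathrm{d},q}L^{1-\eta_\mathrm{r}}\leq C_\mathrm{r}$, which is precisely why that condition on $C_\mathrm{r}$ was imposed. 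Finally, passing from $\cT_n\nu$ on $\cT_n W$ to the normalized restriction $\nu_i$ on $W_i\subset\cT_n W$ multiplies the density by the constant $c_i^{-1}$ and restricts the base points, neither of which affects the H\"older seminorm of $\ln$ of the density, and $|W_i(x,y)|\leq|(\cT_n W)(x,y)|$ for $x,y\in W_i$; hence \eqref{eq:regular_density} holds for each $\nu_i$ with the same constants, completing the verification that $(W_i,\nu_i)$ is a standard pair with respect to $\{\cC^u_{q,x}\}$.
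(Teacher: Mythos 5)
Your proof is correct and follows essentially the same route as the paper: the core is the identical decomposition of $|\ln\rho_i(x)-\ln\rho_i(y)|$ into the distortion term (Lemma~\ref{lem:distortion_loc}) and the pulled-back density term contracted via Lemma~\ref{lem:growth_loc}, with the $L^{1-\eta_\mathrm{r}}$ interpolation and the hypothesis $2C_{\mathrm{d},q}L^{1-\eta_\mathrm{r}}\leq C_\mathrm{r}$ closing the estimate exactly as in the paper (the paper simply declares the remaining standard-pair properties, which you verify explicitly, to be automatic). The only slack in your constants --- $(C_q\Lambda_q^n)^{-\eta_\mathrm{r}}\leq 2^{-\eta_\mathrm{r}}$ rather than $\tfrac12$ when $\eta_\mathrm{r}<1$ --- is present verbatim in the paper's own computation, so it is not a gap relative to the original argument.
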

\begin{proof} 
We only need to check that the density, $\rho_i$, of $\nu_i$ is regular. For $x\in W_i$, $\rho_i(x)=\cJ_{W_i}\cT_n^{-1}(x)\cdot \rho(\cT_n^{-1}x)/c_i$. Thus, for any pair $x,y\in W_i$,
\beqn
\begin{split}
|\ln\rho_i(x)-\ln\rho_i(y)| & \leq |\ln\rho(\cT_n^{-1}x)-\ln\rho(\cT_n^{-1}y)| + \left| \ln \frac{\cJ_{W_i}\cT_n^{-1}(x)}{\cJ_{W_i}\cT_n^{-1}(y)} \right| \\
& \leq C_\mathrm{r} |W(\cT_n^{-1}x,\cT_n^{-1}y)|^{\eta_\mathrm{r}} + C_\mathrm{d,q}|W_i(x,y)|\\
& \leq (C_\mathrm{r}C_q^{-{\eta_\mathrm{r}}}\Lambda_q^{-{\eta_\mathrm{r}}\cdot n} + C_\mathrm{d,q}L^{1-{\eta_\mathrm{r}}}) |W_i(x,y)|^{\eta_\mathrm{r}} \leq C_\mathrm{r} |W_i(x,y)|^{\eta_\mathrm{r}}.
\end{split}
\eeqn
We used Lemma~\ref{lem:distortion_loc} and also $W_i(x,y) = \cT_n( W(\cT_n^{-1} x, \cT_n^{-1} y) )$ together with Lemma~\ref{lem:growth_loc}.
\end{proof}
Thus, $\cG_n=\{(W_i,\nu_i)\}$ is a standard family equipped with the factor measure $\lambda_{\cG_n}(i)=c_i$. More generally, if $\cG$ is a standard family,  $\cG_n$ obtained by processing each standard pair in a similar fashion is a standard family.

\subsection{Holonomy maps}
A holonomy map is a device needed in the coupling construction for coupling some of the probability masses on different points. Let $W_1$ and $W_2$ be two unstable curves (w.r.t.\ $\{\cC^u_{1,x}\}$) connected by the stable foliation $\cW^0$. In other words, for each point $x\in W_1$ the leaf $\cW^0_x$ intersects $W_2$ and conversely for each point $y\in W_2$ the leaf $\cW^0_y$ intersects $W_1$. We assume that the curves $W_i$ are close enough and not too long, so that the connected pairs $(x,y)\in W_1\times W_2$ are uniquely defined by demanding that the connecting leaf be shorter than a small number $\ell_0<1$. Then the holonomy map $\bfh:W_1\to W_2$ is defined by sliding along the leaf: $\bfh x = y$. Since the images $\cT_n W_i$ are connected by the stable foliation $\cW^n$, one can define the holonomy map $\bfh_n=\cT_n\circ \bfh \circ \cT_n^{-1}:\cT_nW_1\to \cT_nW_2$.

\begin{remark}
Notice that if the curves $W_i$ carry measures $\nu_i$ that are compatible in the sense that $\nu_2 = \bfh\nu_1$, then the images $\cT_n W_i$ carry compatible measures: $\cT_n\nu_2 = \bfh_n \cT_n\nu_1$. This will guarantee in the following that once some of the masses on two points have been coupled to each other, they remain coupled.
\end{remark}

The holonomy map $\bfh$ is said to be absolutely continuous, if the measure $\bfh^{-1}m_{W_2}$ is absolutely continuous with respect to the measure $m_{W_1}$. In this case the Jacobian, which measures distortion under the holonomy map, is defined as the Radon--Nikodym derivative $\cJ \bfh= \frac{d (\bfh^{-1}m_{W_2})}{dm_{W_1}}$. The  change-of-variables formula for integrals is $dm_{W_2}(y) = \cJ\bfh(x)dm_{W_1}(x)$ with $y=\bfh x$ \footnote{Given a Borel set $A\subset W_1$, we have $\int_{\bfh A}dm_{W_2}=(\bfh^{-1}m_{W_2})(A) = \int_A \frac{d (\bfh^{-1}m_{W_2})}{dm_{W_1}}\,dm_{W_1}$.}. For any $x\in W_1$, we have $\bfh x=\cT_n^{-1}\bfh_n\cT_n x$. It is elementary to check that if $\bfh_n$ is absolutely continuous, then $\bfh$ inherits this property via the identity
\beq\label{eq:Jac_decomp}
\begin{split}
\cJ \bfh(x) = \cJ_{\bfh_n\cT_nW_1}\cT_n^{-1}(\bfh_n \cT_n x)\cdot\cJ \bfh_n(\cT_n x) \cdot \cJ_{W_1}\cT_n(x)
= \frac{ \cJ_{W_1}\cT_n(x)}{\cJ_{W_2}\cT_n(\bfh x)} \cdot\cJ \bfh_n(\cT_n x). 
\end{split}
\eeq
By reversing the argument, we see that if $\bfh_m$ is absolutely continuous for some $m$, then $\bfh_n$ is absolutely continuous and \eqref{eq:Jac_decomp} holds for all values of $n\geq 0$.

\begin{lem}[Absolute continuity of the holonomy map]\label{lem:holonomy_Jac_bound}
Let $\bfh$ be as above. It is absolutely continuous. Moreover, there exist constants $c_1\geq 1$ and $0<\mu<1$, independent of the curves $W_1$ and $W_2$ and the choice of the sequence $(T_i)_{i=1}^{n_Q}$, such that
\beq\label{eq:holonomy_Jac_bound}
 \left|\ln \cJ \bfh_n(\cT_n x)  \right|  \leq c_1\mu^n
\eeq
holds for $x\in W_1$ and $0\leq n\leq n_Q$. In particular, $e^{-c_1}\leq \cJ\bfh\leq e^{c_1}$.
\end{lem}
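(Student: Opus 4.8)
The key identity is \eqref{eq:Jac_decomp}, which expresses $\cJ\bfh(x)$ in terms of $\cJ\bfh_n(\cT_n x)$ and the ratio of Jacobians of $\cT_n$ along the two curves at $\bfh$-related points. The strategy is therefore to establish \eqref{eq:holonomy_Jac_bound} first for the ``limiting'' holonomy along the genuine stable foliation $\cW^s_{Q,\slot}$ of the single map $\widetilde T_Q$ — which holds for $n$ in the fake future $n\geq n_Q+1$, where everything reduces to the classical absolute-continuity estimate for Anosov diffeomorphisms (see e.g.\ \cite{KatokHasselblatt}) — and then to propagate the bound \emph{backwards} from time $n_Q$ down to time $0$ using \eqref{eq:Jac_decomp} together with the contraction of stable distances. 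Concretely, I would do the following.

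\textbf{Step 1: absolute continuity and the base estimate for $n\geq n_Q+1$.} For $n\geq n_Q+1$ all maps in the future are $\widetilde T_Q$, the leaves $\cW^n_x=\cW^s_{Q,x}$ are honest stable manifolds of $\widetilde T_Q$, and $\bfh_n=\widetilde T_Q^{\,n-n_Q}\circ\bfh_{n_Q}\circ \widetilde T_Q^{-(n-n_Q)}$ when restricted appropriately. The classical argument (telescoping $\cJ\bfh$ along the forward orbit, comparing the stable Jacobians of $\widetilde T_Q$ at $\cT_k x$ and $\cT_k\bfh x$, which differ by $\cO(d(\cT_k x,\cT_k\bfh x))$, a quantity summable by uniform contraction on stable leaves) shows $\bfh$ is absolutely continuous with $\cJ\bfh$ bounded and H\"older along $W_1$, and gives $|\ln\cJ\bfh_n(\cT_n x)|\leq c\,\mu_0^{\,n-n_Q}$ for some $\mu_0\in(0,1)$ and all $n\geq n_Q$; in particular $\bfh_{n_Q}$ is absolutely continuous. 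This is where I would invoke that the angle between the leaves and the cones is uniformly controlled (Assumption (A4) and the remarks following \eqref{eq:fake_unstable}) and that curvatures stay bounded (Lemma~\ref{lem:curvature}), so the constants are uniform in the sequence.

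\textbf{Step 2: propagating down from $n_Q$ to $0$.} By the paragraph after \eqref{eq:Jac_decomp}, once $\bfh_{n_Q}$ is absolutely continuous, so is $\bfh_n$ for every $n\geq 0$, and \eqref{eq:Jac_decomp} holds. Rewrite \eqref{eq:Jac_decomp} in the form
\beqn
\ln\cJ\bfh_n(\cT_n x) = \ln\cJ\bfh(x) - \ln\frac{\cJ_{W_1}\cT_n(x)}{\cJ_{W_2}\cT_n(\bfh x)}.
\eeqn
Applying this at $n=0$ identifies $\ln\cJ\bfh_0=\ln\cJ\bfh$; so it suffices to bound the ratio term. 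Split the product $\cT_n$ over the windows $I_q$ and use, on the $q$-th window, the distortion-type estimate: the stable Jacobian $\cJ_{W}T$ of a single map $T\in\cU_q$ differs from that of $\widetilde T_q$ by $\cO(\ve_q)$, and more importantly, comparing $\cJ_{W_1}T_k$ at $\cT_{k-1}x$ with $\cJ_{W_2}T_k$ at $\cT_{k-1}\bfh x$ produces an error $\cO$ of the stable distance $d(\cT_{k-1}x,\cT_{k-1}\bfh x)$ plus an error $\cO$ of the angle between the tangent directions of $\cT_{k-1}W_1$ and $\cT_{k-1}W_2$ at these two points. The first is controlled because $\cT_{k-1}x$ and $\cT_{k-1}\bfh x$ lie on a common leaf $\cW^{k-1}_{\slot}$ whose length contracts: combining \eqref{eq:contraction_bound} with the fact that, restricted to the future $(T_i)_{i>0}$ augmented by $\widetilde T_Q$, all stable leaves contract at a uniform exponential rate, we get $d(\cT_k x,\cT_k\bfh x)\leq c\,\mu^{k}$ for a uniform $\mu\in(0,1)$ and $0\le k\le n_Q$ (this uses that the leaf connecting $x$ to $\bfh x$ has length $<\ell_0$). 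The angle error is handled by the uniform H\"older regularity of the distributions $F^n$ (stated in Appendix~\ref{app:Holder}): the tangent of $\cT_k W_i$ is within $\cO(\kappa\cdot d)$ of $F^{k}_{\cT_k x}$ resp.\ $F^{k}_{\cT_k\bfh x}$, and $|F^{k}_{\cT_k x}-F^{k}_{\cT_k\bfh x}|\lesssim d(\cT_k x,\cT_k\bfh x)^{\beta}$ for a uniform H\"older exponent $\beta$, again summable. Summing the per-step errors over $k=n, n+1,\dots$ gives a geometric tail $\leq c\,\mu^{n}$, which is exactly \eqref{eq:holonomy_Jac_bound}; taking $n=0$ yields $|\ln\cJ\bfh|\le c_1$.

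\textbf{Main obstacle.} The delicate point is Step 2's comparison of the two forward orbits $(\cT_k x)$ and $(\cT_k \bfh x)$ along a \emph{non-stationary, finite-time} stable foliation that is not dynamically canonical: one must show the connecting-leaf length contracts uniformly (rate independent of the sequence and of $n_Q$) despite the windows $I_q$ changing the hyperbolicity constants, and one must bound the difference of tangent directions of the \emph{images} $\cT_k W_i$ — curves that are only ``unstable w.r.t.\ the cones'', not unstable manifolds — using only the H\"older control of the reference distributions $F^n$ from Appendix~\ref{app:Holder} and the curvature bounds of Lemma~\ref{lem:curvature}. Making all constants $c_1,\mu$ uniform in the sequence $(T_i)_{i=1}^{n_Q}$, as required, is the crux; the single-map input of Step 1 is standard.
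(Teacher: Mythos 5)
Your overall skeleton coincides with the paper's: anchor the construction with the fake future $T_n=\widetilde T_Q$ for $n>n_Q$, where classical Anosov theory gives absolute continuity of $\bfh_{n_Q}$ (hence, via \eqref{eq:Jac_decomp}, of every $\bfh_n$ and of $\bfh$) together with $\cJ\bfh_n(\cT_n x)\to 1$; then represent $\ln\cJ\bfh_n(\cT_n x)$ as the tail sum over $k\geq n$ of the one-step comparisons $\ln\bigl(\cJ_{\cT_kW_1}T_{k+1}(\cT_k x)/\cJ_{\cT_kW_2}T_{k+1}(\cT_k\bfh x)\bigr)$, bound each term geometrically using the contraction of the connecting leaf of $\cW^k$ and an angle comparison of the tangents of the image curves, and sum. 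This is precisely the paper's proof, whose per-step input is Lemma~\ref{lem:coalesce}. (Your intermediate phrase ``it suffices to bound the ratio term'' is imprecise --- the exponential decay comes from the \emph{tail} representation, which needs $\cJ\bfh_n\to1$ from your Step~1 --- but your final summation over $k\geq n$ shows you are using the correct identity.)

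The genuine gap is in the per-step angle control, which you yourself flag as the crux. You claim the tangent of $\cT_kW_i$ at the relevant point is within $\cO(\kappa\cdot d)$ of the reference direction $F^k$ there; this is false. The curvature cap only controls how the tangent varies \emph{along a single curve}, and at $k=0$ (or any fixed $k$) the tangent of $W_1$ is merely some vector in the unstable cone, so its angle to $F^0_x=E^u_{1,x}$ can be of the order of the cone opening, with no relation to $d(\cT_kx,\cT_k\bfh x)$ or to $\kappa$. With only this plus the H\"older continuity of $F^n$, your per-step error is $O(1)$ and the tail is not geometric. The missing ingredient is the inclination-lemma-type alignment: under the forward compositions, any tangent vector in the unstable cone aligns exponentially fast with the pushed-forward reference direction (the paper's Lemma~\ref{lem:exp_comp}, giving an angle bound of order $\Lambda_q^{-2k}$), and it is this, combined with the H\"older continuity of the reference distributions (Lemma~\ref{lem:distr_Holder}) applied to base points at distance $\leq\delta\lambda^k$, and with Assumption (A3) to pass between the windows $I_q$, that produces the geometric per-step estimate of Lemma~\ref{lem:coalesce}. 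Once that mechanism is supplied, the remainder of your argument --- uniform contraction of the connecting stable leaf and geometric summation of the tail --- goes through and matches the paper's proof.
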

As a curiosity, \eqref{eq:holonomy_Jac_bound} continues to hold for $n>n_Q$ since $T_n = \widetilde T_Q$. In particular, the precise value of $\cJ\bfh(x)$ could be obtained as the limit $\lim_{n\to\infty}\frac{ \cJ_{W_1}\cT_n(x)}{\cJ_{W_2}\cT_n(\bfh x)}$. However, we only care about $n\leq n_Q$. What is important above is that $c_1$ and $\mu$ do not change when the lengths of the intervals $I_q$ in \eqref{eq:time} and hence the value of $n_Q$ are increased arbitrarily.
\begin{proof}[Proof of Lemma~\ref{lem:holonomy_Jac_bound}]
Denote $x^n = \cT_n x$ and $y=\bfh x$ for $x\in W_1$, and $W_i^n = \cT_n W_i$ for $i=1,2$.  We also write $y^n=\cT_n y=\bfh_n x^n$. 
Since $T_n = \widetilde T_Q$ and $\cW^{n-1}_x=\cW^s_{Q,x}$ for all $x$, for all $n>n_Q$, we know the following: $\bfh$ inherits absolute continuity from $\bfh_{n_Q}$, \eqref{eq:Jac_decomp} holds for all $n\geq 0$ as explained above, and $\lim_{n\to\infty}\cJ\bfh_n(x^n) = 1$. Therefore, with the aid of the chain rule $\cJ_W \cT_{n}(x) = \cJ_{W^{n-1}} T_n(x^{n-1})\cdots \cJ_{W^0} T_1(x^0)$, we conclude that
\beqn
\cJ \bfh_m(x^m) =  \prod_{n\geq m} \frac{\cJ_{W_1^n} T_{n+1} (x^n)}{\cJ_{W_2^n}T_{n+1}(y^n) } .
\eeqn
By Assumption (A3), we may use Lemma~\ref{lem:coalesce} on each of the time intervals $I_q$. Since $|\ln z| \leq \max(z-1,z^{-1}-1)$ for all $z>0$, we see that \eqref{eq:coalesce} implies \eqref{eq:holonomy_Jac_bound}:
\beqn
\begin{split}
 \left|\ln \cJ \bfh_m(x^m)  \right|  \leq \sum_{n\geq m}\left| \ln \frac{\cJ_{W_1^n} T_{n+1} (x^n)}{\cJ_{W_2^n}T_{n+1}(y^n) } \right| 
\leq C' \frac{\mu^m}{1-\mu} = c_1\mu^m.
\end{split}
\eeqn
\end{proof}

\begin{lem}[Regularity of the holonomy map]\label{lem:holonomy_reg} 
There exist $0<\eta_\bfh<1$ and $C_\bfh>0$, such that the following holds. Let $W_1$ and $W_2$ be standard unstable curves connected by the stable foliation $
\cW^0$ as above. For $x_1,x_2\in W_1$ such that $|W_1(x_1,x_2)|\leq 1$,
\beqn
|\ln \cJ \bfh(x_1) - \ln \cJ \bfh(x_2)| \leq C_\bfh |W_1(x_1,x_2)|^{\eta_\bfh}.
\eeqn
\end{lem}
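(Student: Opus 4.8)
The plan is to use the decomposition identity \eqref{eq:Jac_decomp} at a well-chosen time~$n$. For $x_1,x_2\in W_1$ with $y_i=\bfh x_i$, we have
\beqn
\ln\cJ\bfh(x_1)-\ln\cJ\bfh(x_2) = \left[\ln\cJ_{W_1}\cT_n(x_1) - \ln\cJ_{W_1}\cT_n(x_2)\right] - \left[\ln\cJ_{W_2}\cT_n(y_1) - \ln\cJ_{W_2}\cT_n(y_2)\right] + \left[\ln\cJ\bfh_n(\cT_n x_1) - \ln\cJ\bfh_n(\cT_n x_2)\right].
\eeqn
The first two bracketed terms are distortion differences over the curves $W_1,W_2$, which by Lemma~\ref{lem:distortion_loc} (applied on each time window~$I_q$ and summed using the contraction $|W^{-j}|\leq C_q^{-1}\Lambda_q^{-j}|W|$, exactly as at the end of the proof of that lemma, but now for the forward Jacobian of $\cT_n$ read as the inverse Jacobian of $\cT_n^{-1}$ along the images) are bounded by a constant multiple of $|W_1(x_1,x_2)|$ and $|W_2(y_1,y_2)|$ respectively. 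Since $\bfh$ has Jacobian bounded between $e^{-c_1}$ and $e^{c_1}$ by Lemma~\ref{lem:holonomy_Jac_bound}, and $\bfh$ slides along short stable leaves, $|W_2(y_1,y_2)|\leq C|W_1(x_1,x_2)|$, so the first two terms together contribute at most $C|W_1(x_1,x_2)|$, which is $\leq C|W_1(x_1,x_2)|^{\eta_\bfh}$ for any $\eta_\bfh\leq 1$ once $|W_1(x_1,x_2)|\leq 1$.

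The third term is the one that must be made small by choosing $n$ large: by Lemma~\ref{lem:holonomy_Jac_bound} each of $|\ln\cJ\bfh_n(\cT_n x_i)|\leq c_1\mu^n$, so the third bracket is at most $2c_1\mu^n$. Thus for every $n\leq n_Q$,
\beqn
|\ln\cJ\bfh(x_1)-\ln\cJ\bfh(x_2)| \leq C\,|W_1(x_1,x_2)| + 2c_1\mu^n.
\eeqn
Now I optimize over $n$. The images $\cT_n W_1$ grow, and the separation $|\cT_n W_1(\cT_n x_1,\cT_n x_2)|$ is at least $C_1\Lambda_1^n|W_1(x_1,x_2)|$ on the first window (and keeps expanding at a definite exponential rate thereafter, by Lemma~\ref{lem:growth_loc} applied successively on each $I_q$, with the transition estimate of Assumption (A3) ensuring the product of expansion factors stays exponential in~$n$ with a uniform base $\Lambda>1$ and constant~$C>0$). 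Conversely $|\cT_n W_1(\cT_n x_1,\cT_n x_2)|$ must stay bounded by the length cap~$L$ as long as the image curve has been standardized, or at any rate by $\bar\Lambda^n|W_1(x_1,x_2)|$ by the upper bound in \eqref{eq:growth_bound} — but the key direction is the lower bound. Choosing $n$ to be the largest integer with $C\Lambda^n|W_1(x_1,x_2)|\leq 1$ (which is $\geq 0$ since $|W_1(x_1,x_2)|\leq 1$, after possibly enlarging the constant, and $\leq n_Q$ is automatic for $|W_1(x_1,x_2)|$ not too small — and the remaining range is handled by the crude bound $|\ln\cJ\bfh(x_1)-\ln\cJ\bfh(x_2)|\leq 2c_1$ combined with $n=n_Q$), we get $\Lambda^{-n}\leq C|W_1(x_1,x_2)|$, hence $\mu^n = (\mu^{1/\ln\Lambda})^{n\ln\Lambda}\leq C|W_1(x_1,x_2)|^{-\ln\mu/\ln\Lambda}$. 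Setting $\eta_\bfh = \min\!\left(1, \dfrac{\ln(1/\mu)}{\ln\Lambda}\right)$ then yields both terms bounded by $C_\bfh|W_1(x_1,x_2)|^{\eta_\bfh}$, as claimed.

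The main obstacle I expect is bookkeeping the uniformity across the changing neighborhoods $\cU_q$: the constants $C_{\mathrm{d},q}$, $C_q$, $\Lambda_q$, $\bar\Lambda_q$ are all $q$-dependent, so the distortion sum for $\cT_n$ spanning several windows $I_q$ must be controlled by a single geometric series with a uniform ratio, and likewise the composite expansion factor over several windows must remain exponential with a uniform base — this is precisely where Assumption (A3) and the transition Lemma (\texttt{lem:coalesce}, used in the proof of Lemma~\ref{lem:holonomy_Jac_bound}) are needed, and one must check these interface estimates do not degrade as $Q$ is kept fixed but $n_Q$ grows. A secondary point to verify carefully is that the distortion bound Lemma~\ref{lem:distortion_loc}, stated for $\cT_n^{-1}$ acting on a curve whose preimages are standard, applies to our situation: here $W_1$ itself is standard and by definition $\cT_n W_1$ standardizes into standard pieces, so reading the identity backwards (the forward Jacobian of $\cT_n$ at $x\in W_1$ equals the reciprocal of the inverse Jacobian of $\cT_n^{-1}$ at $\cT_n x$ on the image curve) puts us in the hypotheses of that lemma on each standardized piece containing $\cT_n x_1$ and $\cT_n x_2$ — with the minor caveat that $x_1,x_2$ might land in different pieces, in which case one inserts intermediate points at the cut locations and sums, using that the total image length is still $O(\bar\Lambda^n|W_1(x_1,x_2)|)$.
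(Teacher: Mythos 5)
Your overall strategy is the paper's: split $\ln\cJ\bfh(x_1)-\ln\cJ\bfh(x_2)$ via \eqref{eq:Jac_decomp}, kill the $\bfh_n$-terms with Lemma~\ref{lem:holonomy_Jac_bound}, control the two forward-distortion terms, and optimize over $n$. But there is a genuine gap in the distortion step. Lemma~\ref{lem:distortion_loc} controls distortion by the length of the curve segment at the \emph{final} time of the composition; read for the forward Jacobian of $\cT_n$ along $W_1$, the bound is $C_{\mathrm{d}}\,|\cT_nW_1(\cT_nx_1,\cT_nx_2)|$, i.e.\ the \emph{image} segment length, which grows like $\bar\Lambda^{\,n}|W_1(x_1,x_2)|$ (this is exactly the chain of inequalities in the paper's proof, ending in $C\bar\Lambda^{m}|W_1(x_1,x_2)|$). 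Your claim that the first two bracketed terms are $\leq C|W_1(x_1,x_2)|$ \emph{uniformly in $n$} is therefore false, and the whole optimization changes: one must balance the growing term $c_2\bar\Lambda^{n}|W_1(x_1,x_2)|$ against the decaying term $2c_1\mu^{n}$, which is what the paper does with $m=\ln|W_1(x_1,x_2)|/\ln\mu$, yielding the exponent $\eta_\bfh=1-\ln\bar\Lambda/|\ln\mu|$ (or, balancing symmetrically, $|\ln\mu|/(\ln\bar\Lambda+|\ln\mu|)$), not your $\min(1,\ln(1/\mu)/\ln\Lambda)$. Your choice of $n$ as the largest integer with $C\Lambda^{n}|W_1(x_1,x_2)|\leq1$ does not rescue the distortion term either, since the image length is controlled only by the \emph{upper} rate $\bar\Lambda>\Lambda$: with that choice $\bar\Lambda^{n}|W_1(x_1,x_2)|\approx(\bar\Lambda/\Lambda)^{n}$, which diverges as $|W_1(x_1,x_2)|\to0$.

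A second, smaller defect is the cap $n\leq n_Q$ together with the fallback bound $2c_1$ for very small $|W_1(x_1,x_2)|$: a constant does not produce a modulus of the form $C_\bfh|W_1(x_1,x_2)|^{\eta_\bfh}$ as $|W_1(x_1,x_2)|\to0$, so the Hölder estimate fails precisely in the regime where it matters. The paper avoids this because the holonomy and the foliations are defined via the augmented future $T_n=\widetilde T_Q$ for $n>n_Q$; consequently \eqref{eq:Jac_decomp}, Lemma~\ref{lem:holonomy_Jac_bound}, and the distortion estimates hold for \emph{all} $n\geq0$, and the optimizing time $m$ may exceed $n_Q$ freely. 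With these two corrections your argument collapses onto the paper's proof.
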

\begin{proof}
Denote $x^n = \cT_n x$ for all $x$ in $W_1^n = \cT_n W_1$. We also set $y_i=\bfh x_i$, $y_i^n = \cT_n y_i$, and $W_2^n = \cT_n W_2$. By \eqref{eq:Jac_decomp},
\beqn
\begin{split}
|\ln \cJ \bfh(x_1) - \ln \cJ \bfh(x_2) |
\leq  \left| \ln \frac{\cJ_{W_1}\cT_m(x_1)}{\cJ_{W_1}\cT_m(x_2)} \right| + \left| \ln \frac{\cJ_{W_2}\cT_m(y_2)}{\cJ_{W_2}\cT_m(y_1)} \right|
+ \sum_{i=1,2} \! \left|\ln \cJ \bfh_m(x_i^m) \right|,
\end{split}
\eeqn
for all $m$. The last sum can be bounded with the aid of \eqref{eq:holonomy_Jac_bound}. Using the chain rule
$
\cJ_W \cT_{n}(x) = \cJ_{W^{n-1}} T_n(x^{n-1})\cdots \cJ_{W^0} T_1(x^0),
$
Lemma~\ref{lem:distortion_loc} and the bounds \eqref{eq:growth_bound},
\beqn
\begin{split}
\left| \ln \frac{\cJ_{W_1}\cT_{n_Q}(x_1)}{\cJ_{W_1}\cT_{n_Q}(x_2)} \right| & \leq \sum_{0\leq q\leq Q-1} \left| \ln \frac{\cJ_{W_1^{n_q}}\cT_{n_{q+1},n_{q}+1}(x_1^{n_q})}{\cJ_{W_1^{n_q}}\cT_{n_{q+1},n_{q}+1}(x_2^{n_q})} \right| 
\\
& \leq \sum_{0\leq q\leq Q-1} C_{\mathrm{d},q+1} \! \left| \cT_{n_{q+1},n_{q}+1} \!\left(W_1^{n_q}(x_1^{n_q},x_2^{n_q})\right) \right| 
\\
& \leq \sum_{0\leq q\leq Q-1} C_{\mathrm{d},q+1} \bar\Lambda_{q+1}^{n_{q+1}-n_q} \left |W_1^{n_q}(x_1^{n_q},x_2^{n_q})\right|
\\
& \leq \sum_{0\leq q\leq Q-1} C_{\mathrm{d},q+1} \bar\Lambda_{q+1}^{n_{q+1}-n_q}\cdots \bar\Lambda_2^{n_2-n_1}\bar \Lambda_1^{n_1}  |W_1(x_1,x_2)|
\\
& \leq
\left(\max_{1\leq q\leq Q} C_{\mathrm{d},q}\right) \! |W_1(x_1,x_2)| \sum_{1\leq q\leq Q}\left(\max_{1\leq q\leq Q} \bar\Lambda_{q}\right)^{n_q}
\\
& \leq
\frac{\max_{1\leq q\leq Q} C_{\mathrm{d},q}}{1-\left(\max_{1\leq q\leq Q} \bar\Lambda_{q}\right)^{-1}} \left(\max_{1\leq q\leq Q} \bar\Lambda_{q}\right)^{n_Q} |W_1(x_1,x_2)| 
\\
& = C\bar\Lambda^{n_Q} |W_1(x_1,x_2)|.
\end{split}
\eeqn
Similarly, for any $m$,
\beqn
\left| \ln \frac{\cJ_{W_1}\cT_{m}(x_1)}{\cJ_{W_1}\cT_{m}(x_2)} \right| \leq  C\bar\Lambda^{m} |W_1(x_1,x_2)|
\quad\text{and}\quad
 \left| \ln \frac{\cJ_{W_2}\cT_{m}(y_2)}{\cJ_{W_2}\cT_{m}(y_1)} \right| \leq C\bar\Lambda^{m} |W_2(y_1,y_2)|.
\eeqn
Notice from the definition of $C$ and $\bar\Lambda$ that they are independent of the curves $W_i$ and of the sequence $(T_i)$. Because $| W_2 (y_1,y_2) | \leq \sup_{W_1 (x_1,x_2)}\cJ \bfh \cdot |W_1 (x_1,x_2)|$, 
\beqn
\left| \ln \frac{\cJ_{W_1}\cT_{m}(x_1)}{\cJ_{W_1}\cT_{m}(x_2)} \right| +  \left| \ln \frac{\cJ_{W_2}\cT_{m}(y_2)}{\cJ_{W_2}\cT_{m}(y_1)} \right|  \leq C\bar\Lambda^{m} ( 1+ e^{c_1}) |W_1(x_1,x_2)|  = c_2 \bar\Lambda^m  |W_1(x_1,x_2)|.
\eeqn

Finally, choose $m = \frac{\ln|W_1(x_1,x_2)|}{\ln\mu}$. Then $\mu^m = |W_1(x_1,x_2)|$, $\bar\Lambda^m = |W_1(x_1,x_2)|^{\ln\bar\Lambda/\ln\mu}$, and
\beqn
|\ln \cJ \bfh(x_1) - \ln \cJ \bfh(x_2) | \leq  2c_1\mu^m + c_2 \bar\Lambda^m  |W_1(x_1,x_2)| \leq (2c_1+c_2) |W_1(x_1,x_2)|^{1-\ln\bar\Lambda/|\ln\mu |}.
\eeqn
\end{proof}


\section{Coupling Lemma and the proof of Theorems~\ref{thm:equidistr} and~\ref{thm:weak_conv}}\label{sec:coupling}
Let $(W,\nu)$ be a standard pair and $d\nu=\rho \,dm_W$. We will be interested in densities of the form $\tau \rho$ where $\tau:W\to[0,1]$ is a function. These can be considered as  portions of the measure $\nu$. In practice, we will replace $W$ by the rectangle $\hat W = W\times [0,1]$ with base $W$ and $d\nu$ by the measure $d\hat\nu=d\nu\otimes dt$, where $dt$ denotes the Lebesgue measure on $[0,1]$, and look at the subdomain $\{(x,t)\in \hat W\,:\,0\leq t\leq \tau(x)\}$ of $\hat W$. Introducing the rectangle facilitates bookkeeping.

A standard family $\cG=\{(W_\alpha,\nu_\alpha)\}_{\alpha\in\fA}$ can similarly be replaced by $\hat\cG=\{(\hat W_\alpha,\hat \nu_\alpha)\}_{\alpha\in\fA}$. The measure $\mu_\cG$ induces canonically a measure $\hat\mu_\cG$ on $\cup_\alpha \hat W_\alpha$. A map $T$ on $\cM$ extends to a map on $\cM\times [0,1]$ by setting $T(x,t) \equiv (T(x),t)$ and all observables on $\cM$ extend to observables on $\cM\times [0,1]$ by setting $f(x,t) \equiv f(x)$.

We are now in position to state the following key result. 
\begin{lem}[Coupling Lemma]\label{lem:coupling}
Consider two standard families $\cG=\{(W_\alpha,\nu_\alpha)\}_{\alpha\in\fA}$ and $\cE=\{(W_\beta,\nu_\beta)\}_{\beta\in\fB}$. There exist an almost everywhere defined bijective map $\Theta:\cup_\alpha \hat W_\alpha \to \cup_\beta \hat W_\beta$, called the coupling map, that preserves measure, \ie, $\Theta(\hat\mu_\cG)=\hat\mu_\cE$, and an almost everywhere defined function $\Upsilon:\cup_\alpha \hat W_\alpha\to\bN$, called the coupling time, both depending on the sequence $(T_i)$, such that the following hold:
\begin{enumerate}
\item Let $(x,t)\in \hat W_\alpha$, $\alpha\in\fA$, and $\Theta(x,t)=(y,s)\in \hat W_\beta$, $\beta\in\fB$. Then the points $x$ and $y$ lie on the same leaf, say $W$, of the stable foliation $\cW^0$. If $n\geq \Upsilon(x,t)$, then the distance of the points $\cT_n x$ and $\cT_n y$ along the leaf $\cT_n W$ of the stable foliation $\cW^n$ satisfies $|\cT_nW(\cT_n x,\cT_n y)|<\ell_0\lambda^{n-\Upsilon(x,t)}$. Here $\ell_0>0$ has been introduced earlier and $\lambda=\max_{1\leq q\leq Q}\Lambda_q^{-1} <1$.
\item The exponential tail bound
\beq\label{eq:tailbound}
\hat\mu_{\cG}(\Upsilon>n) \leq C_\Upsilon\vartheta_\Upsilon^n
\eeq
holds for uniform constants $C_\Upsilon>0$ and $\vartheta_\Upsilon\in(0,1)$. 
\end{enumerate}
\end{lem}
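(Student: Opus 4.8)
The plan is to carry over the Dolgopyat--Chernov coupling scheme to the non-stationary, finite-time setting, the essential new feature being that the reference set along which one couples -- the \emph{magnet} -- and the foliation used for the coupling must both be allowed to depend on the current time $n$. \textbf{Magnets.} For each $1\le q\le Q$ I would fix, using the mixing of $\widetilde T_q$ and the $\cC^2$-closeness of the maps in $\cU_q$ to $\widetilde T_q$, a rectangle $R_q\subset\cM$ which is a union of finitely many standard unstable curves (w.r.t.\ $\{\cC^u_{q,x}\}$) together with a ``stable thickness'' bounded by a fixed $\ell_0'\le\ell_0$ measured along the leaves of the finite-time stable foliation $\cW^n$ for $n\in I_q$; this is legitimate because, by Assumption~(A4) and the construction in Section~\ref{sec:foliations}, these leaves are $\cC^1$-close to the stable leaves of $\widetilde T_q$. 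Each $R_q$ is equipped with a reference standard family, and since there are only $Q$ reference maps all constants attached to the $R_q$ are automatically uniform.

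\textbf{Falling into the magnet.} The core estimate I would establish is: there are an integer $\Delta$ and a constant $\delta_0>0$, depending only on $\widetilde T_1,\dots,\widetilde T_Q$, such that for every standard family $\cG$ and every time $n$ with $(n,n+\Delta]\subset I_q$, the standard family obtained from $\cG$ by processing it through $T_{n+1},\dots,T_{n+\Delta}$ in the sense of Lemma~\ref{lem:standard_image} carries $\mu_\cG$-mass at least $\delta_0$ on unstable curves that completely cross $R_q$. This combines: Lemma~\ref{lem:growth_loc}, by which a short standard curve grows to length $\ge\ell$ in a bounded number of steps and -- since the $T_i$ are diffeomorphisms -- standardization only ever cuts overlong curves, so only boundedly many short pieces are produced and no mass leaks to fragmentation; Lemmas~\ref{lem:distortion_loc} and~\ref{lem:standard_image}, by which the images stay standard with uniformly regular densities; and topological transitivity and mixing of $\widetilde T_q$, which survive the $\cC^2$ perturbation and give that a uniformly positive fraction of the length of the image of any long standard curve lies in a sub-curve crossing $R_q$, a uniform lower bound on its mass then following from the density comparison \eqref{eq:density_comp}.

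\textbf{One coupling step and iteration.} Work along epochs spaced $\Delta$ apart, discarding the at most $Q-1$ epochs straddling an interval endpoint $n_q$ (this only inflates constants). At the first epoch, at time $m$ say, at which the current uncoupled parts of both $\cG$ and $\cE$ have sub-families crossing a common $R_q$, match the crossing curves of the two families by sliding along the $\cW^m$-leaves through $R_q$ and, on matched curves, couple at each point the pointwise minimum of the two landed conditional densities. The holonomy Jacobian bound of Lemma~\ref{lem:holonomy_Jac_bound} (constant $c_1$), its regularity (Lemma~\ref{lem:holonomy_reg}), and the density regularity \eqref{eq:regular_density} make this minimum comparable to either density, so a fraction at least $\delta_1=\delta_1(\delta_0,c_1,C_\mathrm{r})>0$ of the remaining mass gets coupled. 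The coupling map $\Theta$ is defined on the corresponding sub-rectangles of the $\hat W_\alpha$ by sending $(x,t)$, for $t$ in the interval whose length is (coupled density)/(total density), to $(\bfh x,\text{rescaled }t)$; the auxiliary coordinate $t\in[0,1]$ is exactly what makes $\Theta$ bijective and measure preserving. By the compatibility remark following the definition of $\bfh_n$, the coupled masses stay matched thereafter, and on this sub-rectangle we set $\Upsilon=m$. The uncoupled remainder is, by Lemma~\ref{lem:standard_image}, again a standard family, so the construction repeats. Since a coupling of fraction $\ge\delta_1$ occurs within every block of $2\Delta$ steps, the uncoupled mass after time $n$ is at most $(1-\delta_1)^{\lfloor n/2\Delta\rfloor}$, which is property~(2) with $\vartheta_\Upsilon=(1-\delta_1)^{1/(2\Delta)}$.

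For property~(1): at the coupling time $m$ the coupled points $\cT_m x$ and $\cT_m y$ lie on a common leaf of $\cW^m$ of length $<\ell_0'\le\ell_0$, namely a leaf threading $R_q$; pulling this leaf back by $\cT_m^{-1}$ shows (using $\cW^0_x=\cT_m^{-1}\cW^m_{\cT_m x}$) that $x$ and $y$ lie on a common leaf $W$ of $\cW^0$, and pushing the time-$m$ segment forward and applying the contraction estimate \eqref{eq:contraction_bound} on each interval $I_q$ together with Assumption~(A3) gives $|\cT_nW(\cT_nx,\cT_ny)|<\ell_0\lambda^{n-m}$ for $n\ge m$, with $\lambda=\max_q\Lambda_q^{-1}$ (the constants $C_q<1$ only help here). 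The main obstacle is the ``falling into the magnet'' estimate: one must show, with all constants uniform in $q$ and in the choice of $(T_i)$ -- in particular independent of the total length $n_Q$ -- that an arbitrary standard family deposits a uniformly positive fraction of its mass onto $R_q$ in a bounded number of steps. This demands a quantitative form of mixing for $\widetilde T_q$ that is robust under replacing $\widetilde T_q^{\,\Delta}$ by an arbitrary composition of maps in $\cU_q$, combined with careful control that standardization and the passage through the endpoints $n_q$ never fragment or leak the mass; this is the technical heart of Section~\ref{sec:coupling_proof}.
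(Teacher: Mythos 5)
Your outline follows the same high-level strategy as the paper (Chernov--Dolgopyat coupling with time-dependent magnets and the time-dependent foliations $\cW^n$), and your treatment of property~(1) via $\cW^0_x=\cT_m^{-1}\cW^m_{\cT_m x}$ and \eqref{eq:contraction_bound} is essentially the paper's. But there are two genuine gaps at exactly the places where the real work lies. First, the ``falling into the magnet'' estimate is asserted, not proved: you appeal to ``topological transitivity and mixing of $\widetilde T_q$, which survive the $\cC^2$ perturbation,'' and then concede at the end that one needs a quantitative mixing statement robust under replacing $\widetilde T_q^{\Delta}$ by an arbitrary composition of maps in $\cU_q$. No such statement is available --- the composition $\cT_n$ has no invariant measure and no mixing property to perturb --- and the paper's proof is built precisely to avoid needing one: it proves the crossing estimate for the \emph{unperturbed} map $\widetilde T_q$ alone (Lemma~\ref{lem:sp_crossing}, using mixing of the SRB measure $\mu_q$, absolute continuity of the holonomies of $\widetilde T_q$, and the compactness construction of Lemma~\ref{lem:rectangles}), with \emph{super-proper} crossings providing a margin, and then transfers it to $\cT_n$ by finite-time shadowing $d(\cT_n x,\widetilde T_q^n x)\leq C(n)\ve_q$ with $\ve_q$ small depending on the fixed mixing time $s''$ (Lemma~\ref{lem:super_to_proper}), so that super-proper crossings of $\widetilde T_q^{s''}W$ become proper crossings of the true image. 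This super-proper/proper device, and the uniformity bookkeeping in Lemma~\ref{lem:sf_crossing}, constitute the argument you have left out.

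Second, your coupling step via the pointwise minimum of the two landed densities does not support the recursion as claimed. After subtracting the minimum, the leftover density $\rho_1-\min(\rho_1,\tilde\rho_2)$ can vanish on part of a curve and in general has no log-H\"older control, so the uncoupled remainder is \emph{not} ``by Lemma~\ref{lem:standard_image} again a standard family'' --- that lemma concerns images of standard pairs under $\cT_n$, not post-coupling leftovers. The paper avoids this by coupling only a constant fraction $\tau_\alpha\leq 1/2$ of the $\cG$-density against a variable fraction $\tau_{\beta,i}(y)$ of the $\cE$-density determined by the consistency rule $\tau_{\beta,i}(y)\rho_\beta(y)=\tau_\alpha\rho_\alpha(x)/\cJ\bfh(x)$; Lemma~\ref{lem:tau_reg} then shows $\sup\tau_{\beta,i}\leq 3/4$ (so the rule is well defined) and that the leftover densities $(1-\tau_\alpha)\rho_\alpha$ and $(1-\tau_{\beta,i})\rho_\beta$ satisfy a H\"older bound with enlarged constant which regularizes after an explicit recovery time $r_0$ --- this uses the holonomy regularity Lemma~\ref{lem:holonomy_reg} and the deliberate choice $\eta_\mathrm{r}=\eta_\bfh$. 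Your proposal contains neither the boundedness-away-from-$1$ of the coupled fraction nor any recovery analysis for the leftover densities and the short leftover pieces, so the claimed geometric decay of the uncoupled mass is not yet justified by your construction.
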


\begin{proof}[Proof of Theorem~\ref{thm:equidistr}]
We use the coupling between $\cG$ and $\cE$ given in the Coupling Lemma:
\beqn
\begin{split}
& \int_{\cM} f\circ  \cT_n\,d\mu_{\cG} - \int_{\cM} f\circ  \cT_n\,d\mu_{\cE} \\
& \qquad\qquad =  \int_{\cM\times [0,1]} (f\circ  \cT_n)(x,t)\,d\hat\mu_{\cG}(x,t) - \int_{\cM\times [0,1]} (f\circ  \cT_n)(y,s)\,d\hat\mu_{\cE}(y,s) \\
& \qquad\qquad= \int_{\cM\times [0,1]} (f\circ  \cT_n)(x,t)\,d\hat\mu_{\cG}(x,t) - \int_{\cM\times [0,1]} (f\circ  \cT_n\circ \Theta)(x,t)\,d\hat \mu_{\cG}(x,t) \\
& \qquad\qquad= \int_{\cM\times [0,1]} \left( f\circ  \cT_n-f\circ  \cT_n\circ \Theta\right)\,d\hat \mu_{\cG} \\
& \qquad\qquad= \int_{\Upsilon\leq n/2} \left( f\circ  \cT_n-f\circ  \cT_n\circ \Theta\right)\,d\hat \mu_{\cG} + \int_{\Upsilon> n/2} \left( f\circ  \cT_n-f\circ  \cT_n\circ \Theta\right)\,d\hat \mu_{\cG}.
\end{split}
\eeqn
By \eqref{eq:tailbound},
\beqn
\left|\int_{\Upsilon> n/2} \left( f\circ  \cT_n-f\circ  \cT_n\circ \Theta\right)\,d\hat \mu_{\cG}\right| \leq C_\Upsilon(\sup f-\inf f) \vartheta_\Upsilon^{n/2} .
\eeqn
On the other hand, assume $\Upsilon(x,t)\leq n/2$. Then $|(f\circ \cT_n-f\circ \cT_n\circ \Theta)(x,t)| \leq {{|f|}_\gamma} (\ell_0\lambda^{n-\Upsilon(x,t)})^\gamma$, by the Coupling Lemma, such that
\beqn
\left|\int_{\Upsilon\leq n/2} \left( f\circ \cT_n-f\circ \cT_n\circ \Theta\right)\,d\hat \mu_{\cG}\right| \leq  \ell_0^\gamma {|f|}_\gamma \lambda^{\gamma n/2}.
\eeqn
Since $\ell_0<1$, the proof is complete.
\end{proof}

\begin{proof}[Proof of Theorem~\ref{thm:weak_conv}]
First notice that both of the measures $\mu^i$ can be disintegrated using a suitable measurable partition of $\cM$ so that we almost obtain two standard families, with the nuisance that the H\"older constants of the logarithms of the conditional measures possibly exceed $C_{\mathrm{r}}$ in \eqref{eq:regular_density}. In the latter case we need a finite waiting time $N=N(\rho^1,\rho^2)$, depending on the H\"older constants of $\ln\rho^i$, until the densities regularize and yield true standard families; see the proof of Lemma~\ref{lem:standard_image}.  For $\gamma$-H\"older observables the result then follows immediately from Theorem~\ref{thm:equidistr}, with the above waiting time giving the constant $C_\gamma(\rho^1,\rho^2)=\theta_\gamma^{-N}$. If $f$ is only continuous, we fix an arbitrarily small $\ve>0$ and, by Stone--Weierstrass theorem, pick a $\gamma$-H\"older $f_\ve$ such that $\|f-f_\ve\|_\infty<\ve$. Then $\left|\int_{\cM} f\circ \cT_n\, d\mu^1 - \int_{\cM} f\circ \cT_n\, d\mu^2\right| < C_\gamma(\rho^1,\rho^2)B_{f_\ve}\theta_\gamma^n+2\ve<3\ve$ if $n>\ln(\ve/C_\gamma(\rho^1,\rho^2)B_{f_\ve})/\ln\theta_\gamma$.
\end{proof}

\section{Proof of the Coupling Lemma}\label{sec:coupling_proof}

\subsection{Outline of the proof}
The idea of the proof is to construct special tiny rectangles, called magnets, which can be thought to attract unstable curves. Mixing guarantees that a small fraction, say 1 percent, of any high enough iterate of any unstable curve will ultimately lie on a magnet. Once two unstable curves from two different standard families cross a magnet, we are able to couple a fraction of their masses by connecting some of their points lying on the magnet with very short stable manifolds. This has to be done with due care, because the resulting coupling has to be measure preserving. 

The process is then repeated recursively, and so the construction of the coupling map $\Theta$ and the coupling time function $\Upsilon$ is recursive. It can be shown that after a fixed finite number of iterates a fixed fraction of the remaining masses can always be coupled, so that the measures on the unstable curves can be `drained' at an exponential rate. 

Since we are dealing with compositions of diffeomorphisms from the sequence $(T_i)$ rather than iterates of a single diffeomorphism, we need to use time-dependent magnets. For $n\in I_q$, $T_n\in \cU_q$, and the magnet to be used should reflect the structure of the reference diffeomorphism $\widetilde T_q$. In our time-dependent, finite time, setting it is not even \emph{a priori} clear what coupling should mean. We choose to construct a coupling via the stable foliations $\cW^n$ that vary from one point in time to the next. As mentioned earlier, these foliations are artificial in the sense that they depend on the artificial future $T_n = \widetilde T_Q$ for times $n> n_Q$ although in reality we only consider the compositions $\cT_n=T_n\circ\dots\circ T_1$ for $n\leq n_Q$. We then have to pay special attention to uniformity: our convergence rates, \textit{etc.}, should depend neither on the particular value of $n_Q$ nor on the particular finite sequence $(T_1,\dots,T_{n_Q})$ as long as the reference automorphisms $(\widetilde T_1,\dots, \widetilde T_Q)$ have been chosen and the earlier assumptions on $(T_1,\dots,T_{n_Q})$ are being respected. 

\subsection{Magnets and crossings} 
In this subsection $1\leq q\leq Q$ is fixed for good. Unstable curves and standard pairs are to be understood as being defined with respect to the cone family  $\{\cC^u_{q,x}\}$ with $q$ fixed.

Consider the Anosov diffeomorphism $\widetilde T_q$.
A `rectangle', $\fR\subset \cM$, is a closed and connected region bounded by two stable manifolds and two unstable manifolds of $\widetilde T_q$. These are called the \emph{s- and u-sides} of the rectangle, respectively. Recalling that $\cW^s_{q,x}$ denotes the stable leaf of $\widetilde T_q$ at~$x$, we also assume that the size of the rectangle in the stable direction satisfies $|\cW^s_{q,x}\cap\fR|\ll \ell_0$.   

We say that an unstable curve $W$ crosses the rectangle \emph{properly}, if
\begin{itemize}
\item[(P1)] $W$ crosses $\fR$ completely, \ie, $W\cap \fR$ contains a connected curve $W'$ connecting the two s-sides of the rectangle, and
\item[(P2)] both components of $W\setminus W'$ are of length strictly greater than $\ell/10$,
\end{itemize}
both hold. In other words, a crossing is proper if the curve crosses the rectangle completely and there is a guaranteed amount of excess length beyond each s-side of the rectangle. Here $\ell$ is the lower bound on the length of a standard curve. 
Finally, an unstable curve $W$ crosses the rectangle \emph{super-properly}, if (P1),
\begin{itemize}
\item[(P2')] both components of $W\setminus W'$ are of length strictly greater than $\ell/5$, and
\item[(P3)] each $x\in W\cap \fR$ divides the curve $\cW^s_{q,x}\cap \fR$ in a ratio strictly between $1/10$ and $9/10$.
\end{itemize}
all hold. Thus, in a super-proper crossing there is more guaranteed excess length than in a proper crossing and the curve also stays well clear of the u-sides. 
\begin{lem}\label{lem:rectangles} 
There exists a finite set of rectangles, $\{\fR^k\,:\,1\leq k\leq k_0\}$, such that each standard unstable curve crosses at least one of the rectangles super-properly.
\end{lem}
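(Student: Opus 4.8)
The plan is to prove this by a compactness argument resting on the uniform local product structure of $\widetilde T_q$; notably, topological transitivity of $\widetilde T_q$ plays no role here, the statement being purely one of local geometry (the dynamics, via mixing, will only enter later when the attractiveness of the magnets is established). First I would record the uniform geometric data for the fixed map $\widetilde T_q$: by compactness of $\cM$ and continuity of the splitting $E^u_q\oplus E^s_q$, there are constants $\rho_0>0$ and $C_0\geq 1$ such that through every $p\in\cM$ pass a local unstable manifold $\cW^u_{q,p}$ and a local stable manifold $\cW^s_{q,p}$, each of length at least $\rho_0$ and of curvature at most $C_0$, and the local product map $[\slot,\slot]$ is defined and $C_0$-bi-Lipschitz to a Euclidean rectangle on any product box of sidelengths $\leq\rho_0$. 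I would also recall that the aperture $a_q$ of the cones $\cC^u_{q,x}$ is small by Assumption (A4), that $\cC^u_{q,x}$ is transverse to the stable foliation with angle bounded away from $0$, and hence that the ``unstable coordinate'' is strictly monotone along any unstable curve with respect to $\{\cC^u_{q,x}\}$.

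Next I would fix the dimensions of a single model rectangle, choosing the parameters in a definite order. Choose $b>0$ with $C_0 b\ll\ell_0$ and $b<\rho_0$; then choose $a>0$ with $a<\min(\ell/(100 C_0),\rho_0)$ and small enough that $C_0(a_q+a+K)\,a<b/20$ (the left-hand side dominating the transverse displacement of a standard unstable curve relative to an $E^u_q$-leaf over arc length $\leq a$, which comes from the cone aperture $a_q$, the curvature $\leq C_0$ of unstable manifolds of $\widetilde T_q$, and the curvature $\leq K$ of standard curves); finally choose $\delta_0>0$ with $\delta_0<\min(a,b)/(20 C_0)$. For each $p\in\cM$ let $\fR_p$ be the rectangle centered at $p$ whose $u$-sides are the sub-arcs of unstable manifolds of $\widetilde T_q$ over the endpoints of the length-$b$ centered sub-arc of $\cW^s_{q,p}$, and whose $s$-sides are the sub-arcs of stable manifolds of $\widetilde T_q$ over the endpoints of the length-$a$ centered sub-arc of $\cW^u_{q,p}$; then $|\cW^s_{q,x}\cap\fR_p|\leq C_0 b\ll\ell_0$, as required of a rectangle.

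The core claim is then: if $W$ is a standard unstable curve with respect to $\{\cC^u_{q,x}\}$ and $x_0\in W$ satisfies $d(x_0,p)<\delta_0$ and lies at arc-length distance at least $\ell/3$ from both endpoints of $W$, then $W$ crosses $\fR_p$ super-properly. In product coordinates $x_0$ lies within $C_0\delta_0$ of the center of $\fR_p$; since $C_0 a\ll\ell$ and the unstable coordinate increases strictly along $W$, the connected sub-arc $W'$ of $W\cap\fR_p$ runs from one $s$-side to the other, which is (P1). Along $W'$ the stable coordinate drifts from that of $x_0$ by at most $b/20$, so $W'$ stays at stable-coordinate distance $<b/20+C_0\delta_0<b/10$ from the center; hence $W'$ never meets the $u$-sides, and each $x\in W\cap\fR_p$ divides $\cW^s_{q,x}\cap\fR_p$ in a ratio safely inside $(1/10,9/10)$ even after the bi-Lipschitz distortion, which is (P3). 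Finally, each of the two components of $W\setminus W'$ has length at least $\ell/3-C_0 a>\ell/5$ by the choice of $a$, which is (P2'); so the crossing is super-proper.

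To finish, I would pick a finite $\delta_0$-net $\{p_1,\dots,p_{k_0}\}$ of $\cM$, which exists by compactness, and set $\fR^k=\fR_{p_k}$. Any standard unstable curve $W$ has its arc-length midpoint at distance $|W|/2\geq\ell/2\geq\ell/3$ from both endpoints and within $\delta_0$ of some $p_k$, so by the core claim $W$ crosses $\fR^k$ super-properly. The main obstacle is precisely the core claim, and within it the step of arranging the hierarchy of small parameters ($b\ll\ell_0$ first, then $a$ small relative to $\ell$ and to $b/(a_q+\cdots)$, then $\delta_0$ smallest) so that a curve merely passing \emph{near the center} of $\fR_p$ — rather than through any prescribed point of it — is nonetheless forced out through the two $s$-sides with the prescribed excess length while staying quantitatively clear of the $u$-sides. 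This is where the narrowness of the cones (A4), the uniform curvature bound on standard curves, and the uniform local product structure of $\widetilde T_q$ are all used; the rest is bookkeeping between the Riemannian metric and the product coordinates plus a routine covering argument.
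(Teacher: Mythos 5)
Your strategy (explicit rectangles of a fixed uniform size around the points of a $\delta_0$-net, plus a quantitative ``pass near the center $\Rightarrow$ super-proper crossing'' claim) is genuinely different from the paper's proof, which is a soft compactness argument: for each closed standard curve one finds \emph{some} rectangle (adapted to that curve) crossed super-properly, observes that super-proper crossing of a fixed rectangle is an open condition in the Hausdorff metric, and extracts a finite subcover from the compact space of closed standard curves. However, your core claim has a genuine gap at condition (P3). As defined, (P3) quantifies over \emph{every} point of $W\cap\fR$, not just over the crossing component $W'$: your drift estimate controls only the component of $W\cap\fR_p$ through the near-center point $x_0$, and the sentence ``each $x\in W\cap\fR_p$ divides $\cW^s_{q,x}\cap\fR_p$ in a ratio safely inside $(1/10,9/10)$'' does not follow from it. Since your rectangles have a size fixed in advance (depending only on $\ell$, $\ell_0$, $a_q$, $K$ and the local product structure of $\widetilde T_q$, not on the curve), a standard curve of length up to $L$ can leave the product chart and re-enter the tiny rectangle a second time at a stable coordinate arbitrarily close to $\pm b/2$, i.e.\ hugging a u-side; nothing in the definition of a standard unstable curve excludes this (think of a long, nearly parallel return of an unstable segment on the torus). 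For such a curve your $\fR_{p_k}$ is \emph{not} crossed super-properly, so the covering argument does not close.

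This is exactly the issue the paper's choice of proof sidesteps: there the rectangle is chosen \emph{after} the curve, and since the rest of the curve is a compact set at positive distance from the chosen central point, the rectangle can be shrunk (curve by curve) so that only the central arc meets it, making (P3) automatic; uniformity is then recovered by Hausdorff-compactness of the space of standard curves rather than by a uniform rectangle size. To repair your argument you would need an additional ingredient excluding stray returns, e.g.\ an assumption tying the length cap $L$ to the local product structure scale $\rho_0$ of $\widetilde T_q$ (so that the whole curve stays in one chart, where monotonicity of the unstable coordinate forbids re-entry) --- a relation the paper never imposes and you neither state nor justify --- or a finite family of rectangles of several stable heights per net point together with a bound on the number and location of extra passes. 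As written, the net-plus-fixed-rectangle construction proves (P1) and (P2') but not (P3), so the lemma is not established.
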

\begin{proof}
Every closed standard curve crosses some rectangle $\fR$ super-properly. Since crossing a rectangle $\fR$ super-properly is an open condition in the Hausdorff metric, the set $\cU_\fR$ of all closed standard curves crossing $\fR$ super-properly is an open set. The collection formed by all the sets $\cU_\fR$ is an open cover of the space of closed standard curves equipped with the Hausdorff metric. The latter space is compact. We can therefore pick a finite subcover and the corresponding rectangles. 
\end{proof}
We now pick arbitrarily one of the rectangles $\fR^k$. This special rectangle, that we will denote by $\fR_q$, will be called a \emph{magnet}. It will serve as a reference set on which points will be coupled. 

\begin{lem}\label{lem:super_to_proper}
Fix $n\geq 1$. By taking $\ve_q$ sufficiently small (depending on $n$) the following holds. If $W$ is an unstable curve and $\widetilde T_q^n W$ crosses $\fR_q$ super-properly, then $\cT_n W$ crosses $\fR_q$ properly, provided each $T_i\in \cU_q$.
\end{lem}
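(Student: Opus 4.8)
The plan is to show that the map $\cT_n$ is a uniformly small $\cC^1$ perturbation of $\widetilde T_q^n$ (for fixed $n$, once $\ve_q$ is small) and that each of the three defining properties of a super-proper crossing is stable under such perturbations, with enough margin to degrade only to a proper crossing. Concretely, write $\cT_n = T_n\circ\cdots\circ T_1$ with each $T_i\in\cU_q=\bD(\widetilde T_q,\ve_q)$; as in the proof of Lemma~\ref{lem:ass}, iterating the bound $\norm{D_x T_i - D_x\widetilde T_q}=\cO(\ve_q)$ yields $\norm{\cT_n - \widetilde T_q^n}_{\cC^1}\leq C(n)\ve_q$ uniformly in the choice of the $T_i$, where $C(n)$ depends only on $n$ and on the reference map $\widetilde T_q$. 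Since $n$ is fixed in the statement, this is genuinely small for $\ve_q$ small.

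First I would set $W'\subset W\cap\widetilde T_q^n$-preimage-of-magnet-data carefully: let $\widetilde W = \widetilde T_q^n W$ and let $\widetilde W'\subset \widetilde W$ be the connected subcurve realizing the super-proper crossing of $\fR_q$, so by (P1) it connects the two s-sides, by (P2') each component of $\widetilde W\setminus\widetilde W'$ has length $>\ell/5$, and by (P3) each point of $\widetilde W\cap\fR_q$ divides its local stable fiber in a ratio strictly between $1/10$ and $9/10$. Now $\cT_n W$ is $\cC^1$-close to $\widetilde W$. For the completeness property (P1) of the \emph{proper} crossing: the s-sides of $\fR_q$ are fixed compact curves, $\widetilde W'$ crosses from one to the other transversally (unstable curves are uniformly transverse to stable manifolds by the uniform angle bound), and a small $\cC^1$ perturbation of a curve crossing a fixed transversal region still crosses it completely — this is an elementary transversality/continuity argument, so $\cT_n W$ contains a connected piece $(\cT_n W)'$ joining the two s-sides. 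For the excess-length property: lengths of curves change continuously in the $\cC^1$ topology (as in Lemma~\ref{lem:growth_loc}, $|\cT_n W| = \int_W \cJ_W\cT_n$), so the two components of $\cT_n W\setminus(\cT_n W)'$ each have length $>\ell/5 - o(1) > \ell/10$ once $\ve_q$ is small enough — note we deliberately only need to beat the weaker bound (P2), which gives us the $\ell/10$ slack. We do not need to preserve (P3) at all, since proper crossings make no reference to the u-sides.

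The one genuine subtlety — and the step I expect to be the main obstacle — is that ``properly crossing $\fR_q$'' is defined using the stable manifolds of $\widetilde T_q$ that bound $\fR_q$, whereas $\cT_n W$ is merely an honest smooth curve; one must make sure the perturbed curve's intersection with the \emph{fixed} rectangle $\fR_q$ really does contain a connected subcurve spanning the two s-sides, rather than, say, entering and leaving through a u-side. This is exactly what property (P3) of the super-proper crossing buys us: in the unperturbed picture $\widetilde W$ stays a definite distance (in the stable direction, a fraction between $1/10$ and $9/10$ of the fiber length, hence bounded below by a constant $\times |\cW^s_{q,x}\cap\fR_q|$) away from both u-sides, so a $\cC^1$-perturbation of size $\ll |\cW^s_{q,x}\cap\fR_q|$ cannot push the relevant arc across a u-side, and the spanning subcurve persists. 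Quantitatively one chooses $\ve_q$ small enough that $C(n)\ve_q$ is smaller than a fixed fraction of $\min(\ell, |\cW^s_{q,x}\cap\fR_q|, \text{(angle gap)})$, all of which are constants determined by $\widetilde T_q$ and the magnet $\fR_q$ alone.

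Finally I would remark that $\ve_q$ is allowed to depend on $n$ here (as the statement explicitly permits), and that the construction is uniform over the choice of the maps $T_i\in\cU_q$ because the perturbation bound $C(n)\ve_q$ was, so the conclusion holds for every admissible sequence simultaneously. Assembling the three verified properties (P1), (P2), and completeness through the s-sides gives that $\cT_n W$ crosses $\fR_q$ properly, which is the claim.
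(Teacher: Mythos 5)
Your proposal is correct and is essentially the paper's argument: the paper's proof consists of the single observation $d(\cT_n x,\widetilde T_q^n x)\leq C(n)\ve_q$, leaving implicit exactly the stability considerations you spell out (the $\ell/5$ versus $\ell/10$ length margin from (P2') and the distance from the u-sides provided by (P3)). Your write-up simply makes explicit what the paper takes as evident from the built-in slack between super-proper and proper crossings.
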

\begin{proof}
For any point $x$, we have the bound $d(\cT_n x,\widetilde T_q^n x)\leq C(n)\ve_q$.
\end{proof}

If $W$ is an unstable curve and $n$ is fixed, let $W^q_{n,i}$, $i\in\fI$, be the connected components of $\widetilde T_q^n W \cap \fR_q$, that correspond to super-proper crossings. That is, each $W^q_{n,i}$ is a subset of a longer curve $\widetilde W^q_{n,i}\subset \widetilde T_q^n W$ which crosses $\fR_q$ super-properly and $\widetilde W^q_{n,i}\cap \fR_q = W^q_{n,i}$.

\begin{lem}\label{lem:rect_intersect_rect}
There exist a subrectangle $\fB_q\subset \fR_q$ and a number $s'\geq 1$ such that the following holds. Assume that $W$ is an unstable curve that crosses a rectangle $\fR^k$ properly, $n\geq s'$, and $\widetilde T_q^n \fR^k\cap \fB_q\neq \varnothing$. Every component of $\widetilde T_q^n \fR^k\cap\fR_q$ that intersects $\fB_q$ intersects $W^q_{n,i}$ for precisely one value of the index $i\in \fI$.
\end{lem}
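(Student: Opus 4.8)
The plan is to pin down the geometry of $\widetilde T_q^n\fR^k$ for $n$ large, using only the hyperbolicity of the single map $\widetilde T_q$, so that Lemma~\ref{lem:growth_loc} applies with all $T_i=\widetilde T_q\in\cU_q$. Since $\widetilde T_q^n$ stretches unstable lengths and contracts stable lengths by a factor of at least $C_q\Lambda_q^n$, the image $\widetilde T_q^n\fR^k$ is a thin ``tube'': its two short sides (images of the s-sides of $\fR^k$) have length at most some $\delta_n\to 0$, its two long sides are unstable manifolds of length at least $c\,\Lambda_q^n$ for some $c>0$, and it is foliated by unstable manifolds of $\widetilde T_q$ running from one short side to the other; for $n$ large the cross-sections are single arcs and the whole tube lies within $\delta_n$ of its core leaf. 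Writing $W'=W\cap\fR^k$ (connected, joining the two s-sides of $\fR^k$, since an unstable curve cannot transversally cross a u-side of a rectangle once the cones are narrow, (A4)), the image $\widetilde T_q^nW'=\widetilde T_q^nW\cap\widetilde T_q^n\fR^k$ runs the full length of the tube, while by the proper-crossing condition (P2) and Lemma~\ref{lem:growth_loc} the curve $\widetilde T_q^nW$ overshoots each endpoint of $\widetilde T_q^nW'$ by at least $e_n\defas C_q\Lambda_q^n\,\ell/10$.

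I would then fix $\fB_q\subset\fR_q$ to be a concentric subrectangle with sides on stable and unstable manifolds of $\widetilde T_q$, lying at a fixed positive distance from all four sides of $\fR_q$ --- small enough that, via the uniform bound $e^{\pm c_1}$ on the Jacobian of the stable holonomy (Lemma~\ref{lem:holonomy_Jac_bound}), every point lying in $\fB_q$, or within a fixed small neighborhood of $\fB_q$ inside $\fR_q$, divides its stable leaf in $\fR_q$ in a ratio strictly between $1/10$ and $9/10$. Next pick $s'$ so large that for all $n\geq s'$: $\delta_n$ is smaller than the distances just fixed; $e_n$ exceeds both $\ell/5$ and the bounded maximal length $r$ of an unstable-curve arc contained in $\fR_q$; and the tangent lines of $\widetilde T_q^nW$ are within $a_q$ of the unstable subbundle $E^u_q$ of $\widetilde T_q$, by cone contraction. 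These choices are uniform in $W$, in $n\geq s'$, and over the finitely many $\fR^k$.

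Now fix any component $C$ of $\widetilde T_q^n\fR^k\cap\fR_q$ with $C\cap\fB_q\neq\varnothing$. The structural observation is that $C\cap\widetilde T_q^nW=C\cap\widetilde T_q^nW'$ is a single arc $\sigma_0$: at the unstable position of a point of $C\cap\fB_q$ the whole tube cross-section (of width $\leq\delta_n$) lies in $\fR_q$, hence in $C$, and it contains a point of $\widetilde T_q^nW'$; and since $\widetilde T_q^nW'$ traverses the thin tube once, its trace in $C$ is one subarc. Let $\sigma$ be the component of $\widetilde T_q^nW\cap\fR_q$ (the sojourn) containing $\sigma_0$, and let $\widetilde\sigma$ be $\sigma$ extended along $\widetilde T_q^nW$ by length $\ell/4$ on each side, which is possible since beyond each end of $\sigma$ the curve $\widetilde T_q^nW$ continues for length at least $e_n-r$. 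I claim $\widetilde\sigma$ crosses $\fR_q$ super-properly, so $W^q_{n,i}=\sigma$ for some $i$. For (P1): $\sigma$ has length $\leq r<e_n$, while from $\sigma_0\subseteq\widetilde T_q^nW'$ the curve $\widetilde T_q^nW$ extends by at least $e_n$ on each side before terminating, so $\sigma$ ends on each side by leaving $\fR_q$ through an s-side (narrow cones forbid a transversal crossing of a u-side), hence joins the two s-sides. For (P2'): the two arcs of $\widetilde\sigma\setminus\sigma$ have length $\ell/4>\ell/5$. For (P3): $\sigma_0$ passes within $\delta_n$ of $\fB_q$, and $\sigma$ is within $a_q$ of an unstable leaf --- which has locally constant stable coordinate --- so $\sigma$ stays in the middle zone of $\fR_q$ fixed above; the pieces of $\widetilde\sigma\setminus\sigma$ that re-enter $\fR_q$ (if any) do so within curve-distance $\ell/4$, hence at essentially the same stable coordinate, still inside that zone. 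Therefore $C$ meets $W^q_{n,i}=\sigma\supseteq\sigma_0$, and since $C\cap\widetilde T_q^nW=\sigma_0\subseteq\sigma$ no other $W^q_{n,j}$ meets $C$.

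I expect the genuinely delicate point to be the verification of (P3) (together with the well-definedness and the (P2') property of $\widetilde\sigma$): one has to argue that the short extensions of $\sigma$ along $\widetilde T_q^nW$ either stay outside $\fR_q$ or re-enter only in the middle zone, which is where the smallness of the length cap $\ell$, of the cone width $a_q$, and of $\delta_n$, together with the local product structure around $\fR_q$, must be used in concert. The remaining ingredients --- the thin-tube picture, the single-arc claim, and the uniqueness --- are routine given Lemma~\ref{lem:growth_loc} and Lemma~\ref{lem:holonomy_Jac_bound}.
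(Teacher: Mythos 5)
Your argument is essentially the paper's own proof, fleshed out: the paper likewise assumes the magnet small so the unstable foliation is nearly parallel on $\fR_q$, chooses $\fB_q$ a safe distance from the u-sides (via a ratio condition on $\cW^s_{q,x}\cap\fR_q$), takes $s'$ large so the components of $\widetilde T_q^n\fR^k\cap\fR_q$ are thin strips aligned with the unstable direction, and observes that each such strip meeting $\fB_q$ contains a piece of $\widetilde T_q^nW$ which, thanks to the proper crossing of $\fR^k$ and the stretched excess length, extends to a super-proper crossing of $\fR_q$. Your additional bookkeeping (the single-arc claim, uniqueness via $C\cap\widetilde T_q^nW=\sigma_0$, and the explicit extension $\widetilde\sigma$) is correct and merely makes explicit what the paper leaves terse; the detour through Lemma~\ref{lem:holonomy_Jac_bound} to position $\fB_q$ is unnecessary but harmless.
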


In words, each intersection of $\widetilde T_q^n \fR^k$ with $\fB_q$ yields a super-proper crossing of $\widetilde T_q^n W$, as long as $n$ is large enough.
\begin{proof}
We  assume that the magnet $\fR_q$ is so small that the leaves of the unstable foliation of $\widetilde T_q$ are almost parallel lines on $\fR_q$. This can be guaranteed by considering only sufficiently small rectangles in the proof of Lemma~\ref{lem:rectangles}. Now, choose $\fB_q\subset \fR_q$ to be a rectangle whose distance to the u-sides of $\fR_q$ is sufficiently large; say each $x\in \fB_q$ divides the curve $\cW^s_{q,x}\cap \fR_q$ in a ratio between $1/5$ and $4/5$. As $\widetilde T_q$ is one-to-one, the components of $\widetilde T_q^n \fR^k\cap \fR_q$ are disjoint. Assuming $s'$ is large, these components are very thin strips, almost aligned with the unstable foliation. Pick such a component and assume that it intersects $\fB_q$. It is a safe distance away from the u-sides of the magnet. Inside this component lies a piece $V$ of the curve $\widetilde T_q^n W$. The piece $V$ has to extend to a super-proper crossing of $\fR_q$, because $W$ crosses $\fR^k$ properly and because $n_1$ is large. Thus, $V$ is actually a subcurve of one of the $W^q_{n,i}$.
\end{proof}

\begin{lem}\label{lem:sp_crossing}
There exist numbers $d''>0$ and $s''\geq 1$ such that if $(W,\nu)$ is a standard pair and $n\geq s''$, then $\nu(\widetilde T_q^{-n}(\cup_{i\in\fI} W^q_{n,i}))\geq d''$.
\end{lem}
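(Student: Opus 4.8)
The plan is to combine the covering result of Lemma~\ref{lem:rectangles} with a mixing property of the single Anosov diffeomorphism $\widetilde T_q$ to produce, for each standard pair $(W,\nu)$, a definite amount of $\nu$-mass whose image under $\widetilde T_q^n$ realizes super-proper crossings of the magnet $\fR_q$. First I would fix, using Lemma~\ref{lem:rectangles}, the finite family of rectangles $\{\fR^k\}_{k=1}^{k_0}$ and the subrectangle $\fB_q\subset\fR_q$ and threshold $s'$ coming from Lemma~\ref{lem:rect_intersect_rect}. Since $\widetilde T_q$ is topologically transitive and mixing with respect to its SRB measure $\mu_q$, and $\fB_q$ has nonempty interior, there is an $s_0\geq s'$ such that for every $k$ and every $n\geq s_0$ the image $\widetilde T_q^n\fR^k$ meets $\fB_q$; indeed $\widetilde T_q^n\fR^k$ contains ever-longer unstable curves which, by topological mixing on the connected manifold $\cM$, eventually intersect any fixed open set.

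The next step is to extract the quantitative $\nu$-mass bound. Given a standard pair $(W,\nu)$, Lemma~\ref{lem:rectangles} supplies a rectangle $\fR^k$ that $W$ crosses super-properly — in particular properly. Let $n\geq s''\defas s_0$. By the previous paragraph, $\widetilde T_q^n\fR^k\cap\fB_q\neq\varnothing$, so Lemma~\ref{lem:rect_intersect_rect} applies: every component of $\widetilde T_q^n\fR^k\cap\fR_q$ meeting $\fB_q$ contains a piece of $\widetilde T_q^n W$ that is a subcurve of one of the $W^q_{n,i}$. Fix one such component; it determines a subcurve $U\subset W$ with $\widetilde T_q^n U\subset W^q_{n,i}$ for some $i\in\fI$. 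It remains to bound $\nu(\widetilde T_q^{-n}(\cup_i W^q_{n,i}))\geq\nu(U)$ from below, uniformly in $W$ and the sequence $(T_i)$. Here I would use the density-comparison inequality \eqref{eq:density_comp}: since $\nu$ has regular density, $\nu(U)\geq D^{-1}\,|U|/|W|\cdot\nu(W)=D^{-1}|U|/|W|$. Then $|W|\leq L$, and I need a lower bound on $|U|$. The curve $\widetilde T_q^n U$ crosses $\fR_q$ completely (it runs across the full $\fB_q$-portion, which is a fixed fraction of $\fR_q$ in the unstable direction), so $|\widetilde T_q^n U|\geq c_0$ for a constant $c_0>0$ depending only on $\fR_q$; pulling back by Lemma~\ref{lem:growth_loc} (applied to $\widetilde T_q$, whose contraction constants are the fixed $\widetilde C_q,\widetilde\Lambda_q$), $|U|\geq c_0/\bar\Lambda_q^{\,n}$. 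This is not yet uniform in $n$, but $n$ ranges only over $n\geq s''$ with $s''$ fixed, and in the intended application one takes $n$ of a fixed size comparable to $s''$; so one sets $d''\defas D^{-1}c_0/(L\bar\Lambda_q^{\,n})$ for that fixed $n$, or more carefully reworks the estimate using that $W^q_{n,i}$ has length bounded below by a constant times $|\widetilde T_q^n W|\geq C_q\Lambda_q^n|W|$, giving $|U|/|W|\geq$ const independent of $n$. Either way the bound is uniform in $W$ and in $(T_i)$ because all rectangles, $\fB_q$, $s''$, $c_0$, $D$, $L$ depend only on the fixed reference map $\widetilde T_q$.

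The main obstacle is the uniformity in $n$: a naive pullback of a fixed-length crossing of $\fR_q$ by $\widetilde T_q^{-n}$ loses a factor $\bar\Lambda_q^{-n}$, which degrades the mass bound as $n$ grows. The clean way around this, which I expect the paper to use, is to estimate the $\nu$-mass of $U$ directly in terms of the relative length of $\widetilde T_q^n U$ inside $\widetilde T_q^n W$ rather than in absolute terms: by the distortion bound (Lemma~\ref{lem:distortion_loc}) the density of $\widetilde T_q^n\nu$ on $\widetilde T_q^n W$ is comparable to uniform, so $\nu(U)=(\widetilde T_q^n\nu)(\widetilde T_q^n U)\geq D'^{-1}|\widetilde T_q^n U|/|\widetilde T_q^n W|$; then $|\widetilde T_q^n U|\geq c_0$ is fixed while $|\widetilde T_q^n W|$ could a priori be large, but after cutting $W$ into standard pieces (or using that we may take $W$ itself standard, hence $|\widetilde T_q^n W|\leq\bar\Lambda_q^n L$) one still faces the same issue — so the genuinely robust statement is that among the (many) super-proper crossings $W^q_{n,i}$ their total relative length in $\widetilde T_q^n W$ is bounded below, which follows from $W$ crossing $\fR^k$ properly together with Lemma~\ref{lem:rect_intersect_rect} giving a crossing for \emph{each} intersection of $\widetilde T_q^n\fR^k$ with $\fB_q$, and these intersections fill a fixed proportion of $\widetilde T_q^n W$. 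Formalizing "fixed proportion" is the delicate point; everything else is bookkeeping with the already-established distortion and growth lemmas.
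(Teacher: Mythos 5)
There is a genuine gap here, and you have in fact located it yourself: your argument produces only \emph{one} super-proper crossing (coming from the single nonempty intersection $\widetilde T_q^n\fR^k\cap\fB_q$ that topological mixing provides), and the $\nu$-mass of its preimage decays like $\bar\Lambda_q^{-n}$, so no $n$-independent $d''$ comes out. Neither of your repairs closes this. Restricting to a fixed $n$ comparable to $s''$ proves a strictly weaker statement than the lemma, which asserts a uniform bound for \emph{all} $n\geq s''$. The alternative claim that $|W^q_{n,i}|$ is bounded below by a constant times $|\widetilde T_q^n W|$ is false: each $W^q_{n,i}$ lies inside the magnet, so its length is at most $\operatorname{diam}\fR_q$, while $|\widetilde T_q^n W|$ grows like $\Lambda_q^n$. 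Finally, your "robust statement" --- that the crossings fill a fixed proportion of $\widetilde T_q^n W$ --- is precisely the quantitative fact that needs proof; you leave it as "the delicate point" with no argument, and Lemmas~\ref{lem:rectangles} and \ref{lem:rect_intersect_rect} cannot supply it, since they only convert intersections with $\fB_q$ into crossings and neither count nor measure them.

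The missing idea, which is the paper's actual mechanism, is to use mixing of the SRB measure $\mu_q$ \emph{quantitatively} and to compare $\nu$ with $\mu_q$ rather than with arclength. For each component $\fR^{k,q}_{n,i}$ of $\widetilde T_q^n\fR^k\cap\fR_q$ that meets $\fB_q$ (and is therefore crossed by exactly one $W^q_{n,i}$, by Lemma~\ref{lem:rect_intersect_rect}), one shows $\nu\bigl(\widetilde T_q^{-n}(W^q_{n,i}\cap\fR^{k,q}_{n,i})\bigr)\geq c\,\mu_q\bigl(\widetilde T_q^{-n}\fR^{k,q}_{n,i}\bigr)$ with a uniform $c$, using the absolute continuity with bounded Jacobians of the holonomy maps of $\widetilde T_q$, the regularity \eqref{eq:regular_density} of the density of $\nu$, and the regularity of the conditional measures of $\mu_q$ on unstable leaves; in other words, inside each thin strip the curve measure is traded for the SRB measure. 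Invariance of $\mu_q$ then removes $\widetilde T_q^{-n}$, and summing over the strips gives $\nu\bigl(\widetilde T_q^{-n}(\cup_{i\in\fI} W^q_{n,i})\bigr)\geq c\,\mu_q(\widetilde T_q^n\fR^k\cap\fB_q)\geq \tfrac{c}{2}\,\mu_q(\fR^k)\,\mu_q(\fB_q)$ for all $n\geq s''$, by mixing of $\mu_q$ applied to the finitely many pairs $(\fR^k,\fB_q)$. This is exactly what makes $d''$ independent of $n$: the exponential contraction you lose when pulling back lengths is compensated because one pulls back an invariant measure instead. Without this step --- effectively an equidistribution statement for $\widetilde T_q^n W$ --- the length-and-distortion bookkeeping in your proposal cannot yield the lemma.
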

In other words, the fraction of $W$ that will cross the magnet $\fR_q$ super-properly after $n$ steps is at least $d''$.

\begin{proof}
Fix a $k$ such that $W$ crosses $\fR^k$ properly. This is possible by Lemma~\ref{lem:rectangles}. By Lemma~\ref{lem:rect_intersect_rect}, if a component of $\widetilde T_q^n \fR^k\cap\fR_q$ intersects the subrectangle $\fB_q$, it is crossed by the curve component $W^q_{n,i}$ for precisely one value of the index $i\in\fI$. In this case let $\fR^{k,q}_{n,i}$ denote the former component of $\widetilde T_q^n \fR^k\cap \fR_q$. Thus  $\fR^{k,q}_{n,i}$ is only defined for a subset $\fI_{\fB_q}\subset \fI$ of indices. We have $\nu(\widetilde T_q^{-n} (\cup_{i\in\fI} W^q_{n,i})) \geq\nu(\widetilde T_q^{-n} (\cup_{i\in\fI_{\fB_q}} W^q_{n,i}))  = \nu(\widetilde T_q^{-n} (\cup_{i\in\fI_{\fB_q}} W^q_{n,i} \cap \fR^{k,q}_{n,i}))  \geq c\mu_q(\widetilde T_q^{-n} (\cup_{i\in\fI_{\fB_q}} \fR^{k,q}_{n,i})) = c\mu_q (\cup_{i\in\fI_{\fB_q}} \fR^{k,q}_{n,i}) \geq c\mu_q (\cup_{i\in\fI_{\fB_q}} \fR^{k,q}_{n,i}\cap \fB_q) = c\mu_q (\widetilde T_q^n \fR^k\cap \fB_q)  \geq \frac{c}{2} \mu_q (\fR^k)\mu_q (\fB_q) $ if $n\geq s''$ and $s''$ is large. The last step in the estimate follows from mixing of the invariant measure $\mu_q$. The third step relies on the absolute continuity with bounded Jacobians of the holonomy maps of $\widetilde T_q$ as well as on the regularity of $\nu$ and of the conditional measures of $\mu_q$ on the unstable leaves of $\widetilde T_q$. To finish, notice that $d''=\mu_q (\fR^k)\mu_q (\fB_q)>0$, since the interiors of $\fR^k$ and $\fB_q$ are nonempty.
\end{proof}

For an unstable curve $W$, let $W_{n,i}$ now be the connected components of $\cT_n W \cap \fR_q$, labeled by $i$, that correspond to proper crossings. That is, each $W_{n,i}$ is a subset of a longer curve $\widetilde W_{n,i}\subset \cT_n W$ which crosses $\fR_q$ properly and $\widetilde W_{n,i}\cap \fR_q = W_{n,i}$.

\begin{cor}\label{cor:sp_crossing}
There exist numbers $d'_0>0$ and $s_0'\geq 1$ such that the following holds. Let $T_i\in \cU_q$ for each $i$ and $(W,\nu)$ be a standard pair. If  $n\geq s_0'$, then $\nu(\cT_n^{-1}(\cup_i W_{n,i}))\geq d'_0$.
\end{cor}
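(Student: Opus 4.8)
\emph{Proof proposal.} The plan is to bootstrap from Lemma~\ref{lem:sp_crossing} (the ``reference'' statement, for iterates of $\widetilde T_q$) to the genuine compositions $\cT_n$ by means of the perturbative Lemma~\ref{lem:super_to_proper}. The one real subtlety is that the smallness of $\ve_q$ demanded by Lemma~\ref{lem:super_to_proper} depends on the number of iterates, so it cannot be invoked directly for large $n$; we circumvent this with a ``restart'' at a fixed time. Set $s_0'=s''$, enlarging $s''$ if necessary so that in addition $s''\geq\ln(2/C_q)/\ln\Lambda_q$ (needed for Lemma~\ref{lem:standard_image} below); enlarging $s''$ does not spoil Lemma~\ref{lem:sp_crossing}. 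We shall obtain $d_0'=d''/2$.

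\emph{Step 1: the finite window $s''\leq m<2s''$.} Here $m$ takes finitely many values, so we may fix $\ve_q$ once and for all so small that the conclusion of Lemma~\ref{lem:super_to_proper} holds for every such $m$ --- a legitimate (finite) requirement, since $s''$ depends only on the reference maps $\widetilde T_1,\dots,\widetilde T_Q$. Let $(V,\omega)$ be an arbitrary standard pair and $(S_i)_{i\geq1}$ an arbitrary sequence in $\cU_q$, with $\cS_m=S_m\circ\dots\circ S_1$. By Lemma~\ref{lem:sp_crossing}, the portion of $V$ carried by $\widetilde T_q^m$ onto super-proper crossings of $\fR_q$ has $\omega$-measure at least $d''$. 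If $\widetilde V^q_{m,i}=\widetilde T_q^m(U_i)$ is such a super-proper crossing, Lemma~\ref{lem:super_to_proper} shows that $\cS_m(U_i)$ crosses $\fR_q$ properly; since $\cS_m(U_i)\subset\cS_m V$, the associated crossing is contained in a proper crossing component $V_{m,j}$ of $\cS_m V\cap\fR_q$. For $x\in U_i$ with $\widetilde T_q^m x\in\fR_q$ one has $d(\cS_m x,\widetilde T_q^m x)\leq C(m)\ve_q$ (as in the proof of Lemma~\ref{lem:super_to_proper}), hence $\cS_m x\in\bigcup_j V_{m,j}$ unless $\widetilde T_q^m x$ happens to lie within $C(m)\ve_q$ of an s-side of $\fR_q$. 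The $\omega$-measure of this exceptional set is $\cO(\ve_q)$: each super-proper crossing has unstable length bounded below by the (fixed) width of $\fR_q$, the strips of width $\leq 2C(m)\ve_q$ near the two s-sides therefore have small relative length, and the density of $\widetilde T_q^m\omega$ along the crossing pieces is at most a uniform multiple of its average by \eqref{eq:density_comp} and the distortion bound of Lemma~\ref{lem:distortion_loc}. Shrinking $\ve_q$ further makes this loss $\leq d''/2$, so $\omega\bigl(\cS_m^{-1}(\bigcup_j V_{m,j})\bigr)\geq d''/2$.

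\emph{Step 2: arbitrary $n\geq s_0'$.} Write $n=(k-1)s''+m$ with $k\geq1$ and $s''\leq m<2s''$. If $k\geq2$, standardize $\cT_{(k-1)s''}W$ by Lemma~\ref{lem:standard_image} into standard pairs $(W_\alpha,\nu_\alpha)$ with factor weights $c_\alpha$, so that $\cT_{(k-1)s''}\nu=\sum_\alpha c_\alpha\nu_\alpha$; if $k=1$, take the single standard pair $(W,\nu)$. Apply Step 1 to each $(W_\alpha,\nu_\alpha)$ with the time-shifted sequence $(T_{(k-1)s''+1},T_{(k-1)s''+2},\dots)$ in $\cU_q$ and chunk length $m$; since $\cT_{n,(k-1)s''+1}\circ\cT_{(k-1)s''}=\cT_n$, this yields a set $A_\alpha\subset W_\alpha$ with $\nu_\alpha(A_\alpha)\geq d''/2$ all of whose points $\cT_{n,(k-1)s''+1}$ sends into a proper crossing of $\cT_{n,(k-1)s''+1}(W_\alpha)\cap\fR_q$. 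Because $\cT_{n,(k-1)s''+1}(W_\alpha)$ is a sub-curve of $\cT_n W$, every such crossing is contained in a proper crossing component $W_{n,j}$ of $\cT_n W\cap\fR_q$. Pulling back through the bijection $\cT_{(k-1)s''}$ of $\cM$ and summing over $\alpha$ against the factor measure gives $\nu\bigl(\cT_n^{-1}(\bigcup_j W_{n,j})\bigr)\geq\sum_\alpha c_\alpha\nu_\alpha(A_\alpha)\geq d''/2=:d_0'$.

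The main obstacle is exactly the $n$-dependence of the threshold for $\ve_q$ in Lemma~\ref{lem:super_to_proper}: the restart at the fixed time $(k-1)s''$ reduces every $n$ to a remainder $m$ in the finite window $[s'',2s'')$, for which a single $\ve_q$ works, at the modest cost of the $\cO(\ve_q)$ mass shed near the s-sides of the magnet, which is harmless by the distortion estimates. Uniformity in the sequence $(T_i)$ is automatic, since $s''$, $d''$ and the constant in $d(\cS_m x,\widetilde T_q^m x)\leq C(m)\ve_q$ depend only on $\widetilde T_1,\dots,\widetilde T_Q$.
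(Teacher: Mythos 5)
Your proof is correct, and its skeleton is the same as the paper's: standardize the image at an intermediate time via Lemma~\ref{lem:standard_image}, then run Lemma~\ref{lem:sp_crossing} for the reference map $\widetilde T_q$ and convert super-proper into proper crossings via Lemma~\ref{lem:super_to_proper} over a final stretch of bounded length, so that the smallness required of $\ve_q$ depends only on reference data. The differences lie in the bookkeeping of the last step. The paper standardizes at time $n-s''$ and uses exactly $s''$ final steps (so its $s_0'$ is $s''$ plus the threshold $\ln\frac{2}{C_q}/\ln\Lambda_q$ of Lemma~\ref{lem:standard_image}), and it handles the mismatch between $\widetilde T_q^{s''}$-crossings and $\cT$-crossings by truncating the \emph{minimal} super-properly crossing subcurves $V_{j,k}$ to subcurves $\tilde V_{j,k}$ whose images are exactly the pieces inside $\fR_q$, retaining a constant fraction of the mass by the length bounds and \eqref{eq:density_comp}; this needs no smallness of $\ve_q$ beyond what Lemma~\ref{lem:super_to_proper} already demands and yields $d_0'=\alpha d''$ with $\alpha$ a distortion-type constant. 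You instead restart at the multiple $(k-1)s''$ with remainder $m\in[s'',2s'')$ (so Lemma~\ref{lem:super_to_proper} must be secured for the whole finite window rather than just for $s''$ --- still a reference-data requirement), and rather than truncating you keep the crossing pieces whole and discard the points whose $\widetilde T_q^m$-images fall in the $C(m)\ve_q$-strips along the s-sides, bounding that loss by $\cO(\ve_q)$ via bounded distortion and shrinking $\ve_q$ once more; this buys the cleaner value $d_0'=d''/2$ at the cost of one extra (legitimate, since it depends only on $s''$, $d''$ and the magnet) smallness condition on $\ve_q$. One point worth making explicit in your Step 1: to conclude that $\cS_m x$ lies on a \emph{proper-crossing component} of $\cS_m V\cap\fR_q$, and not merely in $\fR_q$, you need (P3) (super-proper crossings stay a fixed distance from the u-sides, so only the s-side strips matter) together with the observation that the connected piece of $\cS_m U_i$ through $\cS_m x$ inside $\fR_q$ must join the two s-sides, since an unstable curve crosses each stable leaf of the rectangle at most once; this is the same level of geometric detail the paper itself leaves implicit, so it is not a gap, but it is where (P3) actually enters your argument.
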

\begin{proof}
Fix $m\geq \ln\frac{2}{C_q}/\ln\Lambda_q$. By Lemma~\ref{lem:standard_image}, $(\cT_m W,\cT_m \nu)$ can be broken into a finite collection of standard pairs $(W_j,\nu_j)$ such that $\cT_m W=\cup_j W_j$ and $\cT_m \nu=\sum_j c_j\nu_j$, where $0<\nu_j<1$ and $\sum_j c_j=1$. For each $j$, by Lemma~\ref{lem:sp_crossing}, there is a finite collection of disjoint (minimal) subcurves $V_{j,k}\subset W_j$ such that $\widetilde T_q^{s''}V_{j,k}$ crosses the magnet $\fR_q$ super-properly. Moreover, $\sum_k\nu_j(V_{j,k})\geq d''$ and, by Lemma~\ref{lem:super_to_proper}, the images $\cT_{m+s'',m+1}V_{j,k}$ cross the magnet $\fR_q$ properly. We also have $\cT_m\nu(\cup_j\cup_k V_{j,k}) = \sum_jc_j\sum_k \nu_j(V_{j,k})\geq d''$. Let us relabel the collection of the subcurves $\cT_m^{-1}V_{j,k}\subset W$ by $U_l$. We have so far shown that $\nu(\cup_l  U_l)\geq d''$ and that $\cT_{m+s''} U_l$ crosses the magnet $\fR_q$ properly. 

We are almost done, but we still need to truncate each $U_l$ to a subcurve $\tilde U_l$ so that $\cT_{m+s''} \tilde U_l = \cT_{m+s''} U_l\cap \fR_q$ and argue that $\nu(\cup_l  \tilde U_l)\geq \alpha\nu(\cup_l U_l)$ for some constant $\alpha>0$. Such a truncation amounts to choosing the subcurve $\tilde V_{j,k}=\cT_m\tilde U_l$ of $V_{j,k}\subset W_j$ so that $\cT_{m+s'',m+1}\tilde V_{j,k} = \cT_{m+s'',m+1}V_{j,k}\cap \fR_q$. Now $\nu_j(\tilde V_{j,k})\geq \alpha\nu_j(V_{j,k})$ follows from two observation: 
\begin{itemize}
\item $|\tilde V_{j,k}|$ is bounded uniformly away from zero, because $\cT_{m+s'',m+1}\tilde V_{j,k}$ crosses $\fR_q$ completely.
\item $|\cT_{m+s'',m+1}V_{j,k}\setminus \fR_q|$ is bounded uniformly from above, because $|\widetilde T_q^{s''}V_{j,k}\setminus \fR_q|$ was assumed to be as small as possible (for a super-properly crossing curve). Therefore $|V_{j,k}\setminus \tilde V_{j,k}|=|V_{j,k}\setminus \cT_{m+s'',m+1}^{-1}\fR_q |$ is bounded uniformly from above.
\end{itemize}
Indeed, it is implied that $|\tilde V_{j,k}|/|V_{j,k}|\geq\alpha'$ for some $\alpha'\in(0,1]$ so that, by estimate \eqref{eq:density_comp}, $\nu_j(\tilde V_{j,k})/\nu_j(V_{j,k}) \geq D^{-1}\alpha'$ for all $j,k$.
 
Notice that only $s''$ affected the size of $\ve_q$. This happened when Lemma~\ref{lem:super_to_proper} was used. 
\end{proof}

\subsection{Time-dependent magnets}
For the rest of the section, let $(T_i)$ be a sequence of the form described in the Introduction, which is not confined to a neighborhood $\cU_q$ of any one map $\widetilde T_q$.

The Coupling Lemma needs to hold for the compositions in \eqref{eq:composition}. For this reason we cannot use the same magnet for all times. Moreover, the stable foliation $\cW^n$ that we will use to couple points (more correctly some of the probability masses carried by these points) on the magnets changes with time. This will guarantee that what has aleady been coupled will always remain coupled.  Therefore, we need to introduce the following time-dependent magnets:
For every $q\in\{1,\dots,Q\}$ and every $n\in I_q$, define  
\beqn
\fM_n = \{ x\in\fR_q \,:\; \text{$\cW^n_x\cap\fR_q$ connects the u-sides of $\fR_q$ and has only one component} \}.
\eeqn
In other words, $\fM_n$ consists of those leaves of $\cW^n$ that connect the u-sides of $\fR_q$ \emph{and} are entirely inside $\fR_q$. 

We say that an unstable curve $W$ crosses $\fM_n$ properly if  $n\in I_q$ and $W$ crosses $\fR_q$ properly. Assuming that the cones $\cC^s_{q,x}$ are narrow enough, $\fM_n$ is close to $\fR_q$, and we have $|W\cap\fM_n|\geq \frac34|W\cap\fR_q|$. 

\begin{lem}\label{lem:mini_magnet}
Assume that $W$ crosses the magnet $\fR_q$ properly and that $W$ carries the measure $d\nu = \rho\,dm_W$, where $\rho$ satisfies \eqref{eq:regular_density}. Then, for all $n\in I_q$,
\beqn
\nu(W\cap\fM_n)\geq \frac12\nu(W\cap\fR_q).
\eeqn
\end{lem}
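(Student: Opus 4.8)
The plan is to reduce the asserted bound on $\nu$ to a bound on lengths and then feed it into the density comparison \eqref{eq:density_comp}; write $D=e^{2C_\mathrm{r}L^{\eta_\mathrm{r}}}$ for the constant there. Since $\fM_n\subseteq\fR_q$, it suffices to show $\nu(W\cap(\fR_q\setminus\fM_n))\leq\tfrac12\nu(W\cap\fR_q)$, and applying \eqref{eq:density_comp} with $W'=W\cap(\fR_q\setminus\fM_n)$ and $W''=W\cap\fR_q$ gives
\beqn
\nu(W\cap(\fR_q\setminus\fM_n))\leq D\,\frac{|W\cap(\fR_q\setminus\fM_n)|}{|W\cap\fR_q|}\,\nu(W\cap\fR_q),
\eeqn
so the whole lemma reduces to the purely geometric estimate $|W\cap(\fR_q\setminus\fM_n)|\leq\tfrac1{2D}\,|W\cap\fR_q|$.

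To prove that estimate I would argue quantitatively along the lines sketched before the lemma. Recall that $\fR_q$ is bounded by two stable manifolds of $\widetilde T_q$ (its s-sides) and two unstable manifolds (its u-sides), that its stable-direction extent $h\defas\sup_x|\cW^s_{q,x}\cap\fR_q|$ is $\ll\ell_0$, and that $|W\cap\fR_q|\geq|W'|\geq\delta_q$ for a fixed $\delta_q>0$ (a geometric feature of $\fR_q$ only), since $W$ crosses $\fR_q$ properly. For $x\in\fR_q$ the leaf $\cW^n_x$ is everywhere tangent to the distribution $E^n$, which by Assumption (A4) makes a uniformly small angle $\theta$ with the stable distribution $E^s_q$, where $\theta$ is as small as we please once the cones $\cC^s_{q,x}$ are narrow. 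Hence inside $\fR_q$ — across which a leaf travels a stable-direction distance at most $h$ — the leaf $\cW^n_x$ stays within distance $\leq C(\theta h+h^2)$ of a genuine stable manifold of $\widetilde T_q$. Consequently, if $x$ lies farther than $C(\theta h+h^2)$ from both s-sides, the component of $\cW^n_x\cap\fR_q$ through $x$ reaches neither s-side, hence joins the two u-sides, and narrowness of the cones also rules out $\cW^n_x$ leaving and re-entering $\fR_q$ to form a second component; thus $x\in\fM_n$. Therefore $\fR_q\setminus\fM_n$ lies in a $C(\theta h+h^2)$-neighborhood of the s-sides, and since $W$ meets each s-side transversally with an angle bounded away from $0$ and has curvature $\leq K$, this gives $|W\cap(\fR_q\setminus\fM_n)|\leq C'(\theta h+h^2)$. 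Choosing the stable width $h$ small (when picking the magnet) and the cones narrow enough makes $C'(\theta h+h^2)\leq\tfrac1{2D}\delta_q$, which is what is needed.

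With the geometric estimate in hand the conclusion is immediate: the display above yields $\nu(W\cap(\fR_q\setminus\fM_n))\leq\tfrac12\nu(W\cap\fR_q)$, hence $\nu(W\cap\fM_n)=\nu(W\cap\fR_q)-\nu(W\cap(\fR_q\setminus\fM_n))\geq\tfrac12\nu(W\cap\fR_q)$. I expect the geometric step to be the only real work: one must control, on the scale of the minuscule magnet, both clauses in the definition of $\fM_n$ — that $\cW^n_x\cap\fR_q$ joins the u-sides and is a single component — and this rests on the uniform smallness of the angle between $E^n$ and $E^s_q$ granted by Assumption (A4) together with the fact that a crossing leaf only traverses the tiny stable width $h\ll\ell_0$ of $\fR_q$. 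Everything after that is routine bookkeeping with \eqref{eq:density_comp}.
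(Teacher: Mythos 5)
Your proof is correct, and its skeleton is the same as the paper's: use the regularity of $\rho$ to convert the mass estimate into a length estimate, and get the length estimate from the fact that, with narrow cones, the leaves of $\cW^n$ inside the tiny magnet stay so close to genuine stable manifolds of $\widetilde T_q$ that $\fR_q\setminus\fM_n$ is a thin strip along the s-sides, which $W$ meets transversally. The difference is in how the two ingredients are balanced. The paper applies \eqref{eq:regular_density} \emph{locally}, only to points of $W\cap\fR_q$: since the magnet has small diameter, the Hölder factor $e^{-C_\mathrm{r}|W\cap\fR_q|^{\eta_\mathrm{r}}}$ is close to $1$, so the mild bound $|W\cap\fM_n|\geq\frac34|W\cap\fR_q|$ (asserted just before the lemma from narrowness of the cones) already yields the factor $\frac12$. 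You instead invoke the global comparison \eqref{eq:density_comp} with the fixed constant $D=e^{2C_\mathrm{r}L^{\eta_\mathrm{r}}}$, and must therefore prove the stronger, $D$-dependent bound $|W\cap(\fR_q\setminus\fM_n)|\leq\frac1{2D}|W\cap\fR_q|$; this ties the magnet's stable width $h$ and the cone aperture to $\delta_q/(2D)$, an extra smallness condition that the paper's local use of \eqref{eq:regular_density} renders unnecessary --- harmless, though, since $D$, the curvature cap and the transversality constants are fixed before the magnets are chosen, and one may shrink the stable width while keeping the unstable extent $\delta_q$ fixed. (Strictly, \eqref{eq:density_comp} is derived for standard pairs, but the one-sided comparison you need follows directly from \eqref{eq:regular_density} with constant $e^{C_\mathrm{r}|W|^{\eta_\mathrm{r}}}\leq D$, so this is immaterial.) On the plus side, your quantitative strip argument actually proves the geometric fact that the paper only asserts; like the paper, you should read $\cW^n_x$ in the definition of $\fM_n$ as a local leaf, since for the full leaf the one-component condition could fail by recurrence no matter how narrow the cones are.
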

\begin{proof}
By \eqref{eq:regular_density}, $\rho(y)\geq \rho(x)e^{-C_\mathrm{r}|W\cap\fR_q|^{\eta_\mathrm{r}}}$ for any $x,y\in W\cap\fR_q$. In particular, we can average the left side over $W\cap\fM_n$ and the right side over $W\cap\fR_q$, obtaining $\nu(W\cap\fM_n) \geq \frac{|W\cap\fM_n|}{|W\cap\fR_q|} e^{-C_\mathrm{r}|W\cap\fR_q|^{\eta_\mathrm{r}}} \nu(W\cap\fR_q)$. Since $\fR_q$ has small diameter, the exponential factor is close to $1$. For  $n\in I_q$, we also have $|W\cap\fM_n|\geq \frac34|W\cap\fR_q|$.
\end{proof}

For a standard family with respect to $\{\cC^u_{1,x}\}$, $\cG=\{(W_\alpha,\nu_\alpha)\}_{\alpha\in\fA}$, let $W_{\alpha,n,i}$ be the connected components of $\cT_n W_\alpha \cap \fM_n$ that correspond to proper crossings, and introduce the notation 
\beqn
W_{\alpha,n,\star}=\cT_n^{-1}(\cup_i W_{\alpha,n,i}).
\eeqn

Next, we generalize Corollary~\ref{cor:sp_crossing} of Lemma~\ref{lem:sp_crossing}.
\begin{lem}\label{lem:sf_crossing}
There exist numbers $s_0\geq 1$ and $d_0>0$, such that the following holds.
If $(T_i)$ is a sequence of the general form described in the Introduction and $\cG=\{(W_\alpha,\nu_\alpha)\}_{\alpha\in\fA}$ a standard family with respect to $\{\cC^u_{1,x}\}$ then, for $1\leq q\leq Q$ and $n_{q-1}+s_0\leq n\leq n_q$,
\beqn
\mu_\cG(\cup_\alpha W_{\alpha,n,\star})=\int_\fA \nu_\alpha(W_{\alpha,n,\star}) \,d\lambda_\cG(\alpha)\geq d_0.
\eeqn 
\end{lem}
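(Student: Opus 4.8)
The plan is to reduce to a single standard pair, transport it into a standard family adapted to the cone family $\{\cC^u_{q,x}\}$ of the neighborhood $\cU_q$ containing the time $n$, and then quote the single-neighborhood estimate Corollary~\ref{cor:sp_crossing} together with the mini-magnet estimate Lemma~\ref{lem:mini_magnet}. Since $\mu_\cG(\cup_\alpha W_{\alpha,n,\star})=\int_\fA\nu_\alpha(W_{\alpha,n,\star})\,d\lambda_\cG(\alpha)$ and $\lambda_\cG$ is a probability measure, it suffices to prove a lower bound $\nu(W_{n,\star})\ge d_0$, uniform over standard pairs $(W,\nu)$ with respect to $\{\cC^u_{1,x}\}$, where $W_{n,\star}=\cT_n^{-1}(\cup_i W_{n,i})$ and the $W_{n,i}$ are the components of $\cT_n W\cap\fM_n$ coming from subcurves $\widetilde W_{n,i}\subset\cT_n W$ that cross $\fR_q$ (hence $\fM_n$) properly. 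Fix such a pair.

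Step one is to reach a standard family inside $\cU_q$. Arguing by induction on $q$ and processing $(W,\nu)$ through $\cU_1,\dots,\cU_{q-1}$ --- inside each $\cU_p$ via Lemma~\ref{lem:standard_image} and the remark following it, with Lemma~\ref{lem:curvature} flattening curvature below $K$, and across each transition $\cU_p\to\cU_{p+1}$ via Assumption~(A3) (the distortion input Lemma~\ref{lem:distortion_loc} being insensitive to a single change of neighborhood, as it uses only the caps $\ell,L,K$) --- one shows that $\cT_{n_{q-1}}$ pushes $(W,\nu)$ onto a standard family with respect to $\{\cC^u_{q-1,x}\}$; this needs each $N_p$ large, which is allowed and uniform since $Q$ is fixed. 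Applying $T_{n_{q-1}+1}\in\cU_q$ carries the constituent curves into $\{\cC^u_{q,x}\}$ by (A3), and a further $\lceil\ln(2/C_q)/\ln\Lambda_q\rceil$ maps of $\cU_q$ followed by Lemma~\ref{lem:standard_image} re-standardize; thus there is a constant $m_q$, bounded uniformly in $q$, with $\cT_{n_{q-1}+m_q}\nu=\sum_j c_j\nu_j$, each $(W_j,\nu_j)$ a standard pair with respect to $\{\cC^u_{q,x}\}$ and $\sum_j c_j=1$. (For $q=1$ one may take $m_1=0$.)

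Step two applies the single-neighborhood results. All of $T_{n_{q-1}+m_q+1},\dots,T_n$ lie in $\cU_q$, so once $n-n_{q-1}-m_q\ge s_0'$ Corollary~\ref{cor:sp_crossing} applies to each $(W_j,\nu_j)$ and, summed against $c_j$, gives $\cT_n\nu\bigl(\cup_i(\widetilde W_{n,i}\cap\fR_q)\bigr)\ge d_0'$ --- here each $\widetilde W_{n,i}$, being a proper crossing of a subcurve $\cT_{n,n_{q-1}+m_q+1}W_j\subset\cT_n W$, is in particular a proper crossing of $\cT_n W$, so this mass is carried by proper crossings of $\cT_n W$ itself. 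Taking the magnet $\fR_q$ small enough that every such proper crossing lies inside one standard curve of the standard-family decomposition of $\cT_n W$, each $\widetilde W_{n,i}$ carries a density regular in the sense of \eqref{eq:regular_density} with constant $C_\mathrm{r}$, so Lemma~\ref{lem:mini_magnet} yields $\cT_n\nu(\widetilde W_{n,i}\cap\fM_n)\ge\frac12\cT_n\nu(\widetilde W_{n,i}\cap\fR_q)$. Summing over $i$ and pulling back by $\cT_n$ gives $\nu(W_{n,\star})\ge\frac12 d_0'$, so the lemma holds with $d_0=\frac12 d_0'$ and $s_0=\max_{1\le q\le Q}(m_q+s_0')$.

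The main obstacle is the transition bookkeeping of the first step: checking that a standard family adapted to $\{\cC^u_{p,x}\}$ is carried, through the change of cone family governed by (A3) and a bounded number of subsequent $\cU_{p+1}$-maps, onto a standard family adapted to $\{\cC^u_{p+1,x}\}$ with the \emph{same} universal constants $\ell,L,K,C_\mathrm{r}$ --- in particular that the curvature bound and the density-regularity bound \eqref{eq:regular_density} survive the change of cones --- and, relatedly, the verification that proper crossings of the magnet can be taken to sit inside single standard pieces, which is what allows Lemma~\ref{lem:mini_magnet} to be invoked.
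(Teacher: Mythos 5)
Your proposal is correct and follows essentially the same route as the paper's proof: wait a uniformly bounded number of steps into $I_q$ so that the push-forward becomes a standard family with respect to $\{\cC^u_{q,x}\}$ (the paper phrases this as applying $\cT_{n_{q-1}+s_0-s_0'}$ with $s_0$ taken sufficiently larger than $\max_q s_0'$), then apply Corollary~\ref{cor:sp_crossing} to the remaining composition of $\cU_q$-maps and convert proper crossings of $\fR_q$ into mass on $\fM_n$ via density regularity and Lemma~\ref{lem:mini_magnet}. The only cosmetic difference is your concern about crossings being severed by the standardizing cuts, which is not needed: the regularity \eqref{eq:regular_density} required by Lemma~\ref{lem:mini_magnet} already holds on the uncut image curves by Lemma~\ref{lem:distortion_loc} (equivalently, the proof of Lemma~\ref{lem:standard_image}), exactly as the paper invokes it.
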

In other words, there are time windows for  $n$, such that a significant fraction of the image under $\cT_n$ of the standard family $\cG$ lies on the magnet $\fM_{n}$, ready to be coupled. 
\begin{remark}
Fixing some $1\leq q\leq Q$ and $k\in I_q$, Lemma~\ref{lem:sf_crossing} can be applied to the shifted sequence $(T_i)_{i\geq k}$ and $\cG=\{(W_\alpha,\nu_\alpha)\}_{\alpha\in\fA}$ a standard family with respect to $\{\cC^u_{q,x}\}$. Then, if $k-1+s_0\leq n\leq n_{q}$, as well as if $n_{q'-1}+s_0\leq n\leq n_{q'}$ for some $q'\in\{q+1,\dots, Q\}$, at least a $d_0$-fraction of its image under $\cT_{n,k}$ lies on the magnet $\fM_n$ as a result of proper crossings. 
\end{remark}
\begin{proof}[Proof of Lemma~\ref{lem:sf_crossing}]
The $s_0'$ in Corollary~\ref{cor:sp_crossing} depends on $q$. We take $s_0$ larger than the maximum of these numbers over $1\leq q\leq Q$. If $n_{q-1}+s_0\leq n\leq n_q$, and if $s_0$ is taken sufficiently larger than $s_0'$, then the image of $\cG$ under $\cT_{n_{q-1}+s_0-s_0'}$ after standardizing the curves becomes a standard family with respect to $\{\cC^u_{q,x}\}$. Applying Corollary~\ref{cor:sp_crossing} to this standard family and the map $\cT_{n,n_{q-1}+s_0-s_0'+1}$ yields a lower bound on the $\cT_n\mu_\cG$-measure of proper crossings of $\fR_q$. From this we infer a lower bound on the proper crossings of $\fM_n$ by the regularity of densities with the aid of Lemma~\ref{lem:mini_magnet}.
\end{proof}

\subsection{Coupling step}
Consider first two standard families, $\cG = \{(W_\alpha,\nu_\alpha)\}$ and $\cE=\{(W_\beta,\nu_\beta)\}$, consisting of one standard pair each. 

A good fraction of the images $\cT_{s_0}W_\alpha$ and $\cT_{s_0}W_\beta$ cross the magnet $\fM_{s_0}$ properly, so that $\cT_{s_0}\nu_\alpha(\cup_i W_{\alpha,s_0,i})=\nu_\alpha(W_{\alpha,s_0,\star} ) > d_0$ and $\cT_{s_0}\nu_\beta(\cup_j W_{\beta,s_0,j})=\nu_\beta(W_{\beta,s_0,\star} ) > d_0$. Here $i$ and $j$ run through some finite index sets and $\cT_{s_0}\nu$ is the pushforward of $\nu$. 

Recall that, for a curve $W$, $\hat W$ denotes the rectangle $W\times [0,1]$ with base $W$, and if $W$ carries a measure $d\nu$ then $\hat W$ carries the measure $d\hat\nu=d\nu\otimes dt$. We will construct a coupling from a subset of $\cup_i \hat W_{\alpha,s_0,i}$ to a subset of $\cup_j \hat W_{\beta,s_0,j}$ and then show that the complements of these subsets can be coupled recursively.

With small preliminary preparations, we can assume that the cardinalities of the index sets for $i$ and $j$ are the same, so that we can pair each $\hat W_{\alpha,s_0,i}$ with precisely one $\hat W_{\beta,s_0,i}$. Furthermore, we can assume that their relative masses agree: 
\beq\label{eq:rel_mass}
\frac{\hat\nu_{\alpha,s_0,i}(\hat W_{\alpha,s_0,i})}{Z_{\alpha,s_0}}=\frac{\hat\nu_{\beta,s_0,i}(\hat W_{\beta,s_0,i})}{Z_{\beta,s_0}}  \qquad \forall\,i. 
\eeq
Here $\hat \nu _{\slot,s_0,i}$ is the measure on $\hat W_{\slot,s_0,i}$ and $Z_{\slot,s_0}=\sum_l\hat\nu_{\slot,s_0,l}(\hat W_{\slot,s_0,l}) = \nu_{\slot}(W_{\slot,s_0,\star}) $. To see that no generality is lost making such assumptions, consider a rectangle $\hat W$ with a measure $\hat \nu$ on it. We can subdivide it into lower rectangles $W\times I_k$, where $I_k\subset [0,1]$ is an interval. Then, each $W\times I_k$ is stretched affinely onto $W\times [0,1]$ and equipped with the measure $d\hat\nu_k=|I_k| \,d\hat\nu$. In other words, we end up with replicas of $\hat W$ equipped with lowered measures. Such an operation on $\hat W$ is measure preserving, because the pushforward of the measure $\hat\nu|_{W\times I_k}$ under the affine map $\bA:W\times I_k\to W\times [0,1]$ is precisely $\hat \nu_k$. Subdividing the rectangles in the families $\{\hat W_{\alpha,s_0,i}\}$ and $\{\hat W_{\beta,s_0,j}\}$ as necessary, and relabeling the resulting rectangles, we can tune the number of rectangles as well as their relative weights so as to arrive at the convenient situation described above. Each rectangle $\hat W_{\slot,s_0,i}$ now comes with an associated affine map $\bA_{\slot,s_0,i}$ (which is the identity if no subdivision of the particular rectangle was necessary). Some of the rectangles will have a common curve as their base on the manifold $\cM$, but this is not a matter of concern.

For each fixed $i$, we can couple a subset of $\hat W_{\alpha,s_0,i}$ to a subset of $\hat W_{\beta,s_0,i}$ as follows. Choose a number  $\tau_\alpha\in(0,1/2]$ such that 
\beq\label{eq:tau_alpha}
\tau_\alpha\cdot Z_{\alpha,s_0} = \frac{d_0}{2}.
\eeq
Now, fix $i$. Omitting some ornaments for the sake of readability, let $\bfh$ stand for the holonomy map from $W_{\alpha,s_0,i}$ to $W_{\beta,s_0,i}$ associated with the stable foliation $\cW^{s_0}$ and denote by $\rho_\slot$ the density of $\hat\nu_{\slot,s_0,i}$ with respect to $dm_{W_{\slot,s_0,i}}\otimes dt$. 

The subset $\hat W_{\alpha,s_0,i}'=\{(x,t)\in \hat W_{\alpha,s_0,i} \,:\, 0\leq t\leq \tau_\alpha \}$ is coupled to a corresponding subset $\hat W_{\beta,s_0,i}'=\{(y,s)\in \hat W_{\beta,s_0,i} \,:\, 0\leq s\leq \tau_{\beta,i}(y)\}$ via the coupling map $\Theta_{s_0,i}':\hat W_{\alpha,s_0,i}'\to\hat W_{\beta,s_0,i}' : (x,t)\mapsto (y,s)$ with
\beqn
y = \bfh x \qquad\text{and}\qquad s = \frac{\tau_{\beta,i}(y)}{\tau_\alpha} t .
\eeqn
Notice, however, that $\tau_{\beta,i}$ is not constant but a function. It is given by the consistency rule
\beq\label{eq:tau_beta}
\tau_{\beta,i}(y)\rho_\beta(y) = \frac{\tau_\alpha\rho_\alpha(x)}{\cJ \bfh (x)}.
\eeq
The expression on the right-hand side of \eqref{eq:tau_beta} equals the pushforward of the density $\tau_\alpha\rho_\alpha$ under the holonomy map, evaluated at $y=\bfh x$. This guarantees that the coupling is measure preserving:
if $f$ is a measurable function $\hat W_{\beta,s_0,i}'\to \bR$, then
\beqn
\begin{split}
& \int_{\hat W_{\alpha,s_0,i}'} (f\circ \Theta_{s_0,i}')(x,t)\,d\hat \nu_{\alpha,s_0,i}(x,t) = \int_{W_{\alpha,s_0,i}}\int_0^{\tau_\alpha} f(\Theta_{s_0,i}'(x,t))\,\rho_\alpha(x)\,dm_{W_{\alpha,s_0,i}}(x)dt \\
& \quad \qquad = \int_{W_{\alpha,s_0,i}}\left[\int_0^{\tau_\alpha} f(\Theta_{s_0,i}'(x,t))\,dt\right]\rho_\alpha(x)\,dm_{W_{\alpha,s_0,i}}(x) \\
& \quad \qquad = \int_{W_{\beta,s_0,i}}\left[\int_0^{\tau_\alpha} f\!\left(y,\frac{\tau_{\beta,i}(y)}{\tau_\alpha} t \right) dt\right] \frac{\rho_\alpha(x)}{ \cJ \bfh (x)}\,dm_{W_{\beta,s_0,i}}(y) \\
& \quad \qquad = \int_{W_{\beta,s_0,i}}\left[\int_0^{\tau_{\beta,i}(y)} f(y,s) \, ds\right] \frac{\tau_\alpha}{\tau_{\beta,i}(y)}\frac{\rho_\alpha(x)}{ \cJ \bfh (x)}\,dm_{W_{\beta,s_0,i}}(y)
=  \int_{\hat W_{\beta,s_0,i}'} f(y,s)\,d\hat\nu_{\beta,s_0,i}(y,s).
\end{split}
\eeqn

We thus have a coupling for each value of the index $i$, and have therefore managed to couple exactly $d_0/2$ units of mass between the families $\{\hat W_{\alpha,s_0,i}\}$ and $\{\hat W_{\beta,s_0,i}\}$ via a measure preserving map. 

We are now in position to describe the desired coupling map $\Theta$ from a subset $\tilde W_{\alpha}\subset \hat W_{\alpha}$ to a subset $\tilde W_{\beta}\subset \hat W_{\beta}$. Define 
\begin{align*}
\tilde W_{\alpha} &= \{(x,t)\in \hat W_{\alpha} \,:\; \text{$(\cF_{s_0} x,\bA_{\alpha,s_0,i} t)\in  \hat W_{\alpha,s_0,i}'$ for some $i$}\},
\\
\tilde W_{\beta} &=\{(y,s)\in \hat W_{\beta} \,:\; \text{$(\cF_{s_0} y,\bA_{\beta,s_0,i} s)\in \hat W_{\beta,s_0,i}'$ for some $i$}\}.
\end{align*}
The bijective map $\Theta: \tilde W_{\alpha} \to \tilde W_{\beta}$ is defined for a point $(x,t)\in \tilde W_{\alpha}$ such that $(\cF_{s_0} x,\bA_{\alpha,s_0,i} t)\in  \hat W_{\alpha,s_0,i}'$ by the rule
\beqn
(y,s)=\Theta(x,t) \iff (\cF_{s_0} y,\bA_{\beta,s_0,i} s) = \Theta_{s_0,i}'(\cF_{s_0} x,\bA_{\alpha,s_0,i} t).
\eeqn
Because the affine maps and the couplings $\Theta_{s_0,i}'$ are measure preserving, also $\Theta$ is measure preserving; the pushforward of the measure $\hat\mu_\cG|_{\tilde W_\alpha}$ under $\Theta$ is $\hat\mu_\cE|_{\tilde W_\beta}$. In particular, the amount of coupled mass equals $\hat\mu_\cG(\tilde W_\alpha)=\hat\mu_\cE(\tilde W_\beta)=d_0/2$. Finally, the coupling time function $\Upsilon:\tilde W_\alpha\to\bN$ is defined by
\beqn
\Upsilon(x,t)=s_0.
\eeqn

We now have a complete description of how to couple $d_0/2$ units of mass of \emph{any} two standard pairs. Thus, given two standard families  $\cG=\{(W_\alpha,\nu_\alpha)\}_{\alpha\in\fA}$ and $\cE=\{(W_\beta,\nu_\beta)\}_{\beta\in\fB}$, we can couple a subset $\tilde W_\alpha\subset\hat W_\alpha$ with a subset $\tilde W_\beta\subset \hat W_\beta$ for any pair $(\alpha,\beta)\in\fA\times\fB$, in which case we have $\nu_{\alpha}(\tilde W_\alpha)=\nu_\beta(\tilde W_\beta)=d_0/2$. Recall that the index sets $\fA$ and $\fB$ carry probability factor measures $\lambda_\cG$ and $\lambda_\cE$, respectively. They detail how much weight is assigned to standard pairs. Suppose the sets $\fA$ and $\fB$ are finite or countable. Splitting off subrectangles if necessary and stretching them affinely onto complete rectangles as described earlier, we can assume that there exists a bijection $\Delta:\fA\to\fB$ that preserves measure, \ie, $\Delta\lambda_\cG=\lambda_\cE$. Hence, the coupling map $\Theta$ can be constructed from a subset $\cup_{\alpha\in\fA}\tilde W_\alpha\subset \cup_{\alpha\in\fA}\hat W_\alpha$ to a subset $\cup_{\beta\in\fB}\tilde W_\beta\subset \cup_{\beta\in\fB}\hat W_\beta$ so that measure is preserved and in particular so that $\hat\mu_\cG(\cup_{\alpha\in\fA}\tilde W_\alpha)=\hat\mu_\cE(\cup_{\beta\in\fB}\tilde W_\beta)=d_0/2$. On $\cup_{\alpha\in\fA}\tilde W_\alpha$ we set $\Upsilon=s_0$. The map $\Theta$ can be constructed also for uncountable families, but we omit the details \cite{Chernov-BilliardsCoupling}.

\subsection{Recovery step}

Our task is to couple the remaining points of $\hat W_\alpha\setminus \tilde W_\alpha$ to those of $\hat W_\beta\setminus \tilde W_\beta$ and to extend $\Theta$ and $\Upsilon$ to all of $\hat W_\alpha$. We do this recursively. But first we need to prepare the uncoupled parts of the images $\cF_{s_0}\hat W_\alpha$ and $\cF_{s_0}\hat W_\beta$ so that they also can undergo the coupling procedure described above. 

On the one hand, we have the sets $\cF_{s_0}\hat W_\alpha\setminus\cup_i \hat W_{\alpha,s_0,i}$ and  $\cF_{s_0}\hat W_\beta\setminus \cup_i \hat W_{\beta,s_0,i}$ which consist of several rectangles whose base curves do \emph{not} represent proper crossings of the magnet $\fM_{s_0}$. In the worst case, such a base curve is the excess piece of a longer curve that has crossed the magnet $\fM_{s_0}$ properly, but by definition such excess pieces have a uniform lower bound on their length.

On the other hand, we also have the sets $\hat W_{\alpha,s_0,i}\setminus \hat W_{\alpha,s_0,i}'=\{(x,t)\in \hat W_{\alpha,s_0,i} \,:\, \tau_\alpha<t\leq 1 \}$ and $\hat W_{\beta,s_0,i}\setminus\hat W_{\beta,s_0,i}'=\{(y,s)\in \hat W_{\beta,s_0,i} \,:\, \tau_{\beta,i}(y)<s\leq 1\}$ whose bottom complements were coupled already. We stretch each $\hat W_{\alpha,s_0,i}\setminus \hat W_{\alpha,s_0,i}'$ affinely onto the complete rectangle $\hat W_{\alpha,s_0,i}$, replacing the density $\rho_\alpha$ on it by $(1-\tau_\alpha)\rho_\alpha$. Similarly, we stretch each $\hat W_{\beta,s_0,i}\setminus\hat W_{\beta,s_0,i}'$ onto $\hat W_{\beta,s_0,i}$ so that each vertical fiber $\{s \,:\, (y,s)\in \hat W_{\beta,s_0,i},\,\tau_{\beta,i}(y)<s\leq 1\}$ is mapped affinely onto $[0,1]$, and replace the density $\rho_\beta$ by the density $(1-\tau_{\beta,i})\rho_\beta$. These transformations are measure preserving; the pushforward of the original measure on the incomplete rectangle is precisely the new measure on the complete rectangle. Notice that the base curves $W_{\slot,s_0,i}$ are quite short --- of the size of the magnet --- but nevertheless have a uniform lower bound on their length.

In conclusion, a finite \emph{recovery time} $r_0$ will be sufficient for the map $\cT_{s_0+r_0,s_0+1}$ to stretch the base curves of all the remaining rectangles above to standard length. In fact, some may grow too long but can then be standardized by cutting into shorter pieces, as has been discussed earlier.

We still need to address the issue of regularity of $(1-\tau_\alpha)\rho_\alpha$ and $(1-\tau_{\beta,i})\rho_\beta$ as well as show that $\tau_{\beta,i}$ is actually well defined, \ie, that its values do not exceed $1$.
\begin{lem}\label{lem:tau_reg}
Let us take $\eta_\mathrm{r}=\eta_\mathrm{\bfh}$ \footnote{This fixes the value of $\eta_\mathrm{r}$.}. Then $\sup \tau_{\beta,i}\leq 1$. Taking the recovery time $r_0$ sufficiently long, the densities $(1-\tau_\alpha)\rho_\alpha$ and $(1-\tau_{\beta,i})\rho_\beta$ become regular under $\cT_{s_0+r_0,s_0+1}$.
\end{lem}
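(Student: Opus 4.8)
The plan is to prove the two assertions in turn; both exploit the freedom, noted in the proof of Lemma~\ref{lem:rectangles}, to take the magnets $\fR_q$ (hence $\fM_n$) as small as we please.

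\emph{The bound $\sup\tau_{\beta,i}\leq 1$.} First I would identify the $\rho_\beta$-average of $\tau_{\beta,i}$ over $W_{\beta,s_0,i}$. Integrating the consistency rule~\eqref{eq:tau_beta} over $W_{\beta,s_0,i}$ and changing variables by the holonomy map via $dm_{W_{\beta,s_0,i}}(y)=\cJ\bfh(x)\,dm_{W_{\alpha,s_0,i}}(x)$ gives $\int_{W_{\beta,s_0,i}}\tau_{\beta,i}\rho_\beta\,dm_{W_{\beta,s_0,i}}=\tau_\alpha\int_{W_{\alpha,s_0,i}}\rho_\alpha\,dm_{W_{\alpha,s_0,i}}$, and by the relative-mass normalization~\eqref{eq:rel_mass} together with~\eqref{eq:tau_alpha} the right-hand side equals $\tau_\beta\int_{W_{\beta,s_0,i}}\rho_\beta\,dm_{W_{\beta,s_0,i}}$ with $\tau_\beta\defas d_0/(2Z_{\beta,s_0})\leq\tfrac12$. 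In particular $\inf\tau_{\beta,i}\leq\tau_\beta$. Next I would bound the oscillation of $\ln\tau_{\beta,i}$: by~\eqref{eq:tau_beta}, $\ln\tau_{\beta,i}(y)=\ln\tau_\alpha+\ln\rho_\alpha(\bfh^{-1}y)-\ln\rho_\beta(y)-\ln\cJ\bfh(\bfh^{-1}y)$, and estimating the three increments with the density regularity~\eqref{eq:regular_density} (the curves $W_{\slot,s_0,i}$ inherit $C_\mathrm{r}$-regular densities from the standard pairs of Lemma~\ref{lem:standard_image}, the affine subdivisions rescaling densities only by constants), with Lemma~\ref{lem:holonomy_reg}, and with $|W_{\alpha,s_0,i}(x_1,x_2)|\leq e^{c_1}|W_{\beta,s_0,i}(y_1,y_2)|$ from Lemma~\ref{lem:holonomy_Jac_bound}, one obtains $|\ln\tau_{\beta,i}(y_1)-\ln\tau_{\beta,i}(y_2)|\leq C\,|W_{\beta,s_0,i}(y_1,y_2)|^{\eta_\mathrm{r}}$ for a constant $C$ depending only on the reference diffeomorphisms. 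This is precisely where the choice $\eta_\mathrm{r}=\eta_\bfh$ is used: it lets the contributions coming from the densities and from $\cJ\bfh$ share a common H\"older exponent. Since $W_{\beta,s_0,i}\subset\fR_q$, by shrinking the magnets we may assume $C\,|W_{\beta,s_0,i}|^{\eta_\mathrm{r}}\leq\ln\tfrac32$, whence $\sup\tau_{\beta,i}\leq\tfrac32\inf\tau_{\beta,i}\leq\tfrac32\tau_\beta\leq\tfrac34<1$; in particular $1-\tau_{\beta,i}\geq\tfrac14$.

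\emph{Regularity after the recovery time.} The density $(1-\tau_\alpha)\rho_\alpha$ is a constant multiple of $\rho_\alpha$ and so already satisfies~\eqref{eq:regular_density} with constant $C_\mathrm{r}$. For $(1-\tau_{\beta,i})\rho_\beta$ I would convert the oscillation bound on $\ln\tau_{\beta,i}$ into an $\eta_\mathrm{r}$-log-H\"older bound on $1-\tau_{\beta,i}$, using $1-\tau_{\beta,i}\geq\tfrac14$ together with the elementary estimates $|e^{a}-e^{b}|\leq\max(e^{a},e^{b})\,|a-b|$ and $|\ln(1-a)-\ln(1-b)|\leq(1-\max(a,b))^{-1}|a-b|$; this yields a uniform log-H\"older constant for $1-\tau_{\beta,i}$, hence a uniform log-H\"older constant $C_\mathrm{r}'$, depending only on the reference maps, for $(1-\tau_{\beta,i})\rho_\beta$. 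It remains to run these densities, and the densities on all the remaining leftover rectangles, through the dynamics. By Lemmas~\ref{lem:growth_loc} and~\ref{lem:curvature} the base curves reach length at least $\ell$ and curvature at most $K$ after a bounded number of steps (being cut into shorter pieces should they overshoot $L$), and by the computation in the proof of Lemma~\ref{lem:standard_image} a density that is $\eta_\mathrm{r}$-log-H\"older with constant $C_\mathrm{r}'$ has, after $n$ steps, log-H\"older constant at most $C_\mathrm{r}'C_q^{-\eta_\mathrm{r}}\Lambda_q^{-\eta_\mathrm{r}n}+C_{\mathrm{d},q}L^{1-\eta_\mathrm{r}}$, which is $\leq C_\mathrm{r}$ once $n$ exceeds a threshold depending only on $C_\mathrm{r}'$ and the reference maps, by the standing requirement $2C_{\mathrm{d},q}L^{1-\eta_\mathrm{r}}\leq C_\mathrm{r}$. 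Taking $r_0$ to be the maximum of the finitely many thresholds that appear produces a single, uniform recovery time, as demanded by the recursion of the coupling scheme; throughout I would assume, as permitted by the size of the $N_q$, that the recovery takes place inside one interval $I_q$, a transition being absorbed via Assumption~(A3) as in Lemma~\ref{lem:holonomy_Jac_bound}.

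The main obstacle is the first assertion. To push $\sup\tau_{\beta,i}$ \emph{strictly} below $1$ it is not enough that the oscillation of $\ln\tau_{\beta,i}$ be finite: in the extreme case $Z_{\beta,s_0}=d_0$ the average $\tau_\beta$ already equals $\tfrac12$, so the oscillation must be forced below $\ln 2$. This quantitative requirement is exactly what dictates the smallness of the magnets and pins down the identification $\eta_\mathrm{r}=\eta_\bfh$; the remainder is routine bookkeeping inside the density-regularization scheme already set up for Lemma~\ref{lem:standard_image}.
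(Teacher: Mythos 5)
Your proposal is correct, and its second half (the conversion of the oscillation of $\ln\tau_{\beta,i}$ into a log-H\"older bound for $(1-\tau_{\beta,i})\rho_\beta$ via $1-\tau_{\beta,i}\geq\frac14$ and the mean-value estimates, followed by regularization after a uniform recovery time through the mechanism of Lemma~\ref{lem:standard_image}) is essentially identical to the paper's. Where you genuinely diverge is the key bound $\sup\tau_{\beta,i}\leq\frac34$. The paper argues pointwise: it first makes $|\cJ\bfh-1|\leq\delta$ by applying Lemma~\ref{lem:holonomy_Jac_bound} to a $k$-step pullback of the holonomy (so that the connecting leaves are short at time $s_0-k$ and the Jacobian at time $s_0$ is exponentially close to $1$), derives the two-sided length comparison between $W_{\alpha,s_0,i}$ and $W_{\beta,s_0,i}$, and uses that on a small magnet each $\rho_\slot$ is within a factor $1+\cO(|W_{\slot,s_0,i}|^{\eta_\mathrm{r}})$ of its average, so that \eqref{eq:tau_beta}, \eqref{eq:rel_mass}, \eqref{eq:tau_alpha} and $Z_{\beta,s_0}>d_0$ give $\tau_{\beta,i}(y)\leq\frac12\cdot\frac{1+\delta}{1-\delta}(1+o(1))$. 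You instead compute the $\rho_\beta$-weighted average of $\tau_{\beta,i}$ exactly, $d_0/(2Z_{\beta,s_0})\leq\frac12$, by the change of variables and \eqref{eq:rel_mass}, and then bound only the \emph{oscillation} of $\ln\tau_{\beta,i}$, using the crude two-sided bound $e^{\pm c_1}$ on $\cJ\bfh$ together with Lemma~\ref{lem:holonomy_reg}, forcing it below $\ln\frac32$ by shrinking the magnet. This buys you independence from the actual size of $\ln\cJ\bfh$ (only its H\"older oscillation enters), so the pullback trick and the near-identity property of the holonomy are not needed; the paper's pointwise route is more direct but leans on that extra preparation. Both arguments rest on the same ingredients (small magnet, the normalizations \eqref{eq:rel_mass}--\eqref{eq:tau_alpha}, $Z_{\beta,s_0}>d_0$, regularity of the image densities from Lemma~\ref{lem:standard_image}, and the choice $\eta_\mathrm{r}=\eta_\bfh$ so that the new densities keep the exponent $\eta_\mathrm{r}$), and both deliver the same conclusion.
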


\begin{proof}
Throughout the proof we assume that $s_0$ is sufficiently large to begin with. 

By (the proof of) Lemma~\ref{lem:standard_image}, $\rho_\alpha$ and $\rho_\beta$ are regular densities on the curves $W_{\alpha,s_0,i}$ and $W_{\beta,s_0,i}$, respectively. Since multiplication by a \emph{constant} preserves the regularity of a density, $(1-\tau_\alpha)\rho_\alpha$ is regular. 
Showing that $(1-\tau_{\beta,i}(y))\rho_\beta(y)$ is regular requires some analysis.

Recall that the holonomy map $\bfh$ maps $W_{\alpha,s_0,i}$ onto $W_{\beta,s_0,i}$ by sliding along the connecting leaves of the stable foliation $\cW^{s_0}$. Both curves as well as the leaves are inside the magnet. Assuming that the magnet is sufficiently small, $\bfh$ is as close to the identity as we wish: given any $\delta>0$, we may assume that $|\cJ\bfh - 1|\leq \delta$. This follows immediately from Lemma~\ref{lem:holonomy_Jac_bound}, because it can be applied to the $k$-step pullback of $\bfh$ that maps $\cT_{s_0,s_0-k+1}^{-1} W_{\alpha,s_0,i}$ onto $\cT_{s_0,s_0-k+1}^{-1} W_{\beta,s_0,i}$ with $k$ large and the connecting leaves of $\cW^{s_0-k}$ sufficiently short (shorter than $\ell_0$) for Lemma~\ref{lem:holonomy_Jac_bound} to apply.

For each $x\in W_{\alpha,s_0,i}$ denote $y=\bfh x\in W_{\beta,s_0,i}$. As $|W_{\beta,s_0,i} (y_1,y_2)| = \int_{ W_\alpha (x_1,x_2) }\cJ\bfh\,dm_{W_{\alpha,s_0,i}}$, 
\beq\label{eq:length_rel}
\begin{split}
 (1-\delta) |W_{\alpha,s_0,i} (x_1,x_2)| \leq |W_{\beta,s_0,i} (y_1,y_2)| 
\leq (1+\delta) |W_{\alpha,s_0,i} (x_1,x_2)|.
\end{split}
\eeq
Observe that from \eqref{eq:regular_density} follows easily 
\beqn
e^{-C_\mathrm{r} |W_{\slot,s_0,i}|^{\eta_\mathrm{r}}}\leq \frac{|W_{\slot,s_0,i}|}{\nu_{\slot,s_0,i}(W_{\slot,s_0,i})}\,\rho\slot\leq e^{C_\mathrm{r} |W_{\slot,s_0,i}|^{\eta_\mathrm{r}}}
\eeqn
or
\beqn
\rho_\slot = \frac{\nu_{\slot,s_0,i}(W_{\slot,s_0,i})}{|W_{\slot,s_0,i}|}(1+\cO(|W_{\slot,s_0,i}|^{\eta_\mathrm{r}})).
\eeqn
By making the magnet small, the values of $\rho_\slot$ on $W_{\slot,s_0,i}$ are thus as close to its average as we wish. By \eqref{eq:tau_beta},
\beqn
\tau_{\beta,i}(y)\leq \frac{\tau_\alpha}{1-\delta}\frac{\rho_\alpha(x)}{\rho_{\beta}(y)}\leq
 \frac{\tau_\alpha}{1-\delta}\frac{1+\cO(|W_{\alpha,s_0,i}|^{\eta_\mathrm{r}})}{1+\cO(|W_{\beta,s_0,i}|^{\eta_\mathrm{r}})} \frac{|W_{\beta,s_0,i}|}{|W_{\alpha,s_0,i}|} \frac{Z_{\alpha,s_0}}{Z_{\beta,s_0}} \leq \frac12 \cdot \frac{1+\delta}{1-\delta}\frac{1+\cO( |W_{\alpha,s_0,i}|^{\eta_\mathrm{r}})}{1+\cO( |W_{\beta,s_0,i}|^{\eta_\mathrm{r}})} ,
\eeqn
where we have recalled \eqref{eq:rel_mass}, \eqref{eq:tau_alpha}, $Z_{\beta,s_0}=\nu_\beta(W_{\beta,s_0,\star} ) > d_0$, and \eqref{eq:length_rel}. The right-hand side can be made arbitrarily close to $\frac12$, so that we can take, say, $\sup \tau_{\beta,i}\leq \frac34$.

From \eqref{eq:tau_beta} and Lemma~\ref{lem:holonomy_reg} it then follows that
\beqn
\begin{split}
 | \ln (\tau_{\beta,i}(y_1)\rho_\beta(y_1)) - \ln (\tau_{\beta,i}(y_2)\rho_\beta(y_2)) | 
& 
\leq  | \ln \rho_\alpha(x_1) - \ln \rho_\alpha(x_2) | + |\ln \cJ\bfh(x_2) - \ln \cJ\bfh(x_1)| 
\\
& \leq C_\mathrm{r} | W_{\alpha,s_0,i} (x_1,x_2) |^{\eta_\mathrm{r}} + C_\mathrm{\bfh} | W_{\alpha,s_0,i} (x_1,x_2) |^{\eta_\bfh}
\\
& \leq \left(C_\mathrm{r} + C_\mathrm{\bfh} \right)(1-\delta)^{-{\min({\eta_\mathrm{r}},\eta_\bfh)}}| W_{\beta,s_0,i} (y_1,y_2) |^{\min({\eta_\mathrm{r}},\eta_\bfh)}.
\end{split}
\eeqn
Hence, $\ln (\tau_{\beta,i}\rho_\beta)$ is H\"older and then so is $\ln\tau_{\beta,i} = \ln(\tau_{\beta,i}\rho_\beta)-\ln\rho_\beta$. Using the estimates
\beqn
\min(a,b) |\ln a-\ln b|\leq |a-b| \leq \max(a,b)|\ln a-\ln b|\qquad\quad a,b>0
\eeqn
obtained from the mean-value theorem,
\beqn
\begin{split}
& | \ln(1-\tau_{\beta,i}(y_1)) -  \ln(1-\tau_{\beta,i}(y_2))| \\
& \qquad \qquad\leq \frac{| \tau_{\beta,i}(y_1) -  \tau_{\beta,i}(y_2) |}{1-\sup \tau_{\beta,i}} 
\leq \frac{ \sup \tau_{\beta,i}}{1-\sup \tau_{\beta,i}} | \ln \tau_{\beta,i}(y_1) - \ln \tau_{\beta,i}(y_2) |
\\
& \qquad \qquad \leq 3(2C_\mathrm{r} + C_\mathrm{\bfh} )(1-\delta)^{-{\min({\eta_\mathrm{r}},\eta_\bfh)}}|W_{\beta,s_0,i} (y_1,y_2) |^{\min({\eta_\mathrm{r}},\eta_\bfh)} .
\end{split}
\eeqn
A similar estimate is obtained for $(1-\tau_{\beta,i})\rho_\beta$. The H\"older constant is too large for the density to be regular, but (the proof of) Lemma~\ref{lem:standard_image} guarantees that it will become regular after a finite number, $r_0$, of time steps. 
\end{proof}

Finally, at time $s_0+r_0$, we normalize the measures on all the rectangles to probability measures thereby modifying the factor measures (see Introduction) associated with the rectangle families. As a result, we have two new standard families that can be coupled just as the original ones.

\subsection{Exponential tail bound}
For the standard families $\cG$ and $\cE$, the first coupling is constructed at time $s_0$, when enough mass of each family is on the magnet $\fM_{s_0}$: 
\beqn
\hat \mu_\cG(\Upsilon = s_0) = \frac{d_0}{2}.
\eeqn
After every coupling, there is a recovery period of $r_0$ steps, during which curves too short can grow to acceptable (\ie, standard) length and densities get regularized sufficiently. After recovery, another $s_0$ iterations are required to bring enough mass from each standard family on a magnet for the next coupling to be constructed. At the moment of the $(k+1)$st coupling,
\beqn
\hat \mu_\cG(\Upsilon = k(s_0+r_0)+s_0\,|\, \Upsilon>(k-1)(s_0+r_0)+s_0) = \frac{d_0}{2}
\eeqn
is the fraction of the previously uncoupled mass of each standard family $\cG$ and $\cE$ which lies on the magnet $\fM_{k(s_0+r_0)+s_0}$ and becomes coupled.
Hence, 
\beqn
\hat \mu_\cG(\Upsilon = k(s_0+r_0)+ s_0) = \frac{d_0}{2}\left(1-\frac{d_0}{2}\right)^k.
\eeqn

This finishes the proof of the Coupling Lemma.\qed

\appendix

\section{Subspace distance}
A natural notion of distance between subspaces $A,B\subset \bR^M$ is obtained by comparing orthogonal projections to the subspaces in the operator norm:
\beq\label{eq:gap_dist}
\dist(A,B) = \norm{P_A-P_B},
\eeq
where $P_\slot$ are the corresponding orthogonal projections. Notice that
\beq\label{eq:proj_sum}
P_{A\oplus B} = P_A+P_B, \quad \text{if $A\perp B$}.
\eeq

We will also measure the distance between 1-dimensional subspaces using the metric
\beq\label{eq:dist'}
\dist'(A,B) = \min_{\substack{u\in A,\, v\in B:\,\|u\|=\|v\|=1}} \norm{u-v} = \sqrt2 \left( 1-\langle A,B \rangle \right)^{1/2},
\eeq
where
\beqn
\average{A,B}=\max_{\substack{u\in A,\, v\in B:\,\|u\|=\|v\|=1}}\average{u,v}.
\eeqn
Let $u\in A$ and $v\in B$ be unit vectors such that $\langle u,v \rangle \geq 0$. Then
\beqn
\norm{P_A v - P_B v} = 1-\langle u,v \rangle^2 \geq 1-\langle u,v \rangle = \frac{1}{2} \norm{u-v}^{2}
\eeqn
Hence,
\beqn
\dist'(A,B)\leq \sqrt2\dist(A,B)^{1/2}.
\eeqn


\section{Uniform H\"older continuity of the (un)stable distribution}\label{app:Holder}

\begin{lem}\label{lem:distr_Holder}
For all $n$, the distributions $E^n$ and $F^n$ (see Section~\ref{sec:foliations}) are H\"older continuous with the same parameters, and the latter do not depend on the choice of the sequence $(T_i)$. 
\end{lem}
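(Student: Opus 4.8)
The plan is to run the classical graph-transform argument for H\"older continuity of (un)stable distributions, adapted to the non-stationary composition, with attention to uniformity in the sequence $(T_i)$. First I would pass to slope coordinates: for each $q$ fix the continuous splitting $T\cM=E^u_q\oplus E^s_q$ of $\widetilde T_q$ (which is $\cC^{1+\alpha_q}$ by the bunching condition, see \cite{KatokHasselblatt}) and use it to coordinatize the cone field $\{\cC^s_{q,x}\}$: since the cones are narrow (Assumption (A4)), every line in $\cC^s_{q,x}$ is the graph of a linear map $E^s_{q,x}\to E^u_{q,x}$, that is, is described by a ``slope'' $\sigma(x)\in[-a_q,a_q]$. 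For $n\in I_q$ the distribution $E^n$ lies in this cone field (Section~\ref{sec:foliations}), so it is encoded by a bounded function $\sigma^n:\cM\to[-a_q,a_q]$, and the defining recursion $E^n_x=D_{T_{n+1}x}T_{n+1}^{-1}E^{n+1}_{T_{n+1}x}$ becomes a graph transform
\[
\sigma^n(x)=\Phi_{n+1}\bigl(x,\sigma^{n+1}(T_{n+1}x)\bigr),
\]
where $\Phi_{n+1}$ is a globally defined function of $(x,\sigma)\in\cM\times[-a_q,a_q]$ built from the derivatives $D_zT_{n+1}$ written in the splitting coordinates.

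Second I would establish two uniform estimates on $\Phi_{n+1}$. (i) A uniform contraction in the slope variable, $|\partial_\sigma\Phi_{n+1}|\le\mu<1$ on $\cM\times[-a_q,a_q]$, with $\mu$ depending only on $\widetilde T_1,\dots,\widetilde T_Q$: for $\widetilde T_q$ the stable graph transform contracts slopes at a definite rate, uniformly in $x$ by compactness, and for $T\in\cU_q$ it is an $\cO(\ve_q)$-perturbation of this and so still contracts (as in the proof of Lemma~\ref{lem:ass}); what makes this work is strict cone invariance ((A1), valid in one step by (A0)) together with the expansion bound (A2), while at a transition $I_q\to I_{q+1}$ the same conclusion holds by Assumption (A3), which keeps the image inside the next cone. (ii) A uniform Lipschitz dependence on the base point, $|\Phi_{n+1}(x,\sigma)-\Phi_{n+1}(y,\sigma)|\le L\,d(x,y)$, coming from the $\cC^2$-regularity of the maps (uniform over the $\cC^2$-neighborhood $\cU_q$) and the $\cC^{1+\alpha_q}$-regularity of the coordinatizing splittings. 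I would also record that $x\mapsto T_{n+1}x$ is uniformly Lipschitz with constant at most $\bar\Lambda:=\max_q\bar\Lambda_q$. All of $\mu$, $L$, $\bar\Lambda$ depend only on the reference maps.

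Third, the H\"older bound follows by telescoping. With $x_j=\cT_{n+j,n+1}x$, $y_j=\cT_{n+j,n+1}y$, combining (i), (ii) and the triangle inequality gives $|\sigma^{n+j}(x_j)-\sigma^{n+j}(y_j)|\le\mu\,|\sigma^{n+j+1}(x_{j+1})-\sigma^{n+j+1}(y_{j+1})|+L\,d(x_j,y_j)$, and with $d(x_j,y_j)\le\bar\Lambda^{j}d(x,y)$,
\[
|\sigma^n(x)-\sigma^n(y)|\le\mu^{k^\ast}\,|\sigma^{n+k^\ast}(x_{k^\ast})-\sigma^{n+k^\ast}(y_{k^\ast})|+L\,d(x,y)\sum_{j=0}^{k^\ast-1}(\mu\bar\Lambda)^{j}.
\]
The boundary slope $\sigma^{n+k^\ast}$ is either another finite-time slope, bounded by $a_q$, or --- once $n+k^\ast>n_Q$, where $T_n=\widetilde T_Q$ --- the $\cC^{1+\alpha_Q}$ stable slope of $\widetilde T_Q$; in both cases the boundary term is controlled. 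Choosing $k^\ast\asymp\log(1/d(x,y))/\log\bar\Lambda$ balances $\mu^{k^\ast}$ against the geometric sum and yields $|\sigma^n(x)-\sigma^n(y)|\le C\,d(x,y)^{\alpha}$ for $\alpha$ any positive number below $\min_q\log(1/\mu)/\log\bar\Lambda$ and below the bunching exponents, with $C$ depending only on the reference maps. Converting slopes back to subspaces --- slope $\mapsto$ line is bi-Lipschitz on $[-a_q,a_q]$, and the $Q$ fixed changes of coordinates at block boundaries cost only bounded factors --- gives the asserted uniform H\"older continuity of $E^n$. The distribution $F^n$ is handled identically after reversing time: it is the finite-time unstable distribution of the sequence prolonged into the past by $\widetilde T_1$, with forward maps in the role of the inverses, unstable cones contracted going forward, and the $\cC^{1+\alpha_1}$ unstable distribution of $\widetilde T_1$ as the terminal datum. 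Taking the minimum of the two exponents and the maximum of the two constants gives parameters common to $E^n$ and $F^n$.

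The hard part will be estimate (i) --- pinning down a single slope-contraction factor $\mu<1$ (and a single Lipschitz constant $L$) valid at once for all $T\in\cU_q$, all $q$, and all base points, in particular across the block transitions where (A3) is essential, and checking that it, $L$, and hence $C$ and $\alpha$, are genuinely insensitive to $n_Q$ and to the particular sequence $(T_i)$. The telescoping and the passage back to $\dist'$ are routine once this is in place.
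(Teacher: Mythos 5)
Your route is genuinely different from the paper's. You run the classical graph-transform/telescoping argument in slope coordinates adapted to the reference splittings $E^u_q\oplus E^s_q$: a per-step contraction in the slope variable, Lipschitz dependence on the base point, and an optimization of the cut-off $k^\ast$ against $d(x,y)$. The paper instead works in the ambient $\bR^M$: it extends each derivative to $\cL_n(x)=L^{(n)}(\cT_{n-1}x)\cdots L^{(1)}(x)$ with $L^{(i)}(x)=D_xT_i\circ P_x$, verifies the cumulative bounds --- contraction on $E^0_x\oplus(T_x\cM)^\perp$ from (A2), expansion on its orthogonal complement from Lemma~\ref{lem:attracting_cone}, and $\norm{\cL_n(x)-\cL_n(y)}\leq c\,b_1^{2n}\norm{x-y}$ --- and then quotes the abstract subspace-comparison Lemma~\ref{lem:BrinStuck}, which performs internally the same balancing you do by hand; finally it subtracts the smooth distribution $(T\cM)^\perp$. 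What the paper's route buys is that it never needs any per-step (or per-block) hyperbolicity or coordinate bookkeeping across the transitions $I_q\to I_{q+1}$: only the composite bounds with constants $C_q<1$ enter, and uniformity in $(T_i)$ and $n_Q$ is immediate. What your route would buy is a coordinate-free-of-$\bR^M$, arguably more transparent exponent $\log(1/\mu)/\log\bar\Lambda$; the restriction to the bunching exponent you add is not needed, since the reference splittings are only used as Lipschitz coordinates.

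The concrete soft spot is exactly the one you flag, and your proposed justification of it does not work as stated. A one-step slope contraction $\abs{\partial_\sigma\Phi_{n+1}}\leq\mu<1$ does not follow from (A2) or (C2): their constants $C_q,\widetilde C_q<1$ allow the one-step ratio of stable to unstable multipliers of $\widetilde T_q$ (in the invariant splitting) to exceed $1$ at some points, so even for the unperturbed map the single-step graph transform need not contract Euclidean slopes, and perturbing by $\cO(\ve_q)$ cannot repair that. To make (i) true you must either pass to blocks of $m_q$ steps with $C_q\Lambda_q^{m_q}$ large (handling blocks that straddle a boundary via (A3), with at most $Q$ such crossings), or extract a genuine per-step contraction from the strict one-step cone invariance (A0)/(A1)/(A3) plus compactness, e.g.\ via the projective (Hilbert-metric) contraction of maps sending the cone strictly inside itself, converting back to Euclidean slopes at bounded cost. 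Either fix is standard and keeps all constants dependent only on $\widetilde T_1,\dots,\widetilde T_Q$, so your plan can be completed; but as written, estimate (i) is the missing step, and avoiding the need for it is precisely why the paper routes the proof through Lemma~\ref{lem:attracting_cone} and Lemma~\ref{lem:BrinStuck}.
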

Before giving the proof, we need two auxiliary lemmas.

\begin{lem}\label{lem:attracting_cone}
For $1\leq q\leq Q$, there exist constants $k_q\in\N$ and $ 0<C_q'<1$ such that the following holds when $\ve_q$ is small enough. If each $T_i\in \cU_q$ for a fixed $q$ and if $v\in T_x\cM\setminus \cC^s_{q,x}$, then $\norm{D_x \cT_n v} \geq  C_q' \Lambda_q^n \norm{v}$ for all $n\geq 1$ and $D_x \cT_n v \in \cC^u_{q,\cT_n x}$ for all $n\geq k_q$. These statements are uniform in $x$. 
\end{lem}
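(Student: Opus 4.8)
\emph{Proof plan.} By homogeneity it suffices to consider unit vectors $v$. The plan is to prove the statement first for the reference diffeomorphism $\widetilde T_q$ and then transfer it to the composition $\cT_n$ using that each $T_i\in\cU_q$ is $\cC^2$-close to $\widetilde T_q$. First I would decompose $v=v^u+v^s$ along the invariant splitting $E^u_{q,x}\oplus E^s_{q,x}$; the hypothesis $v\notin\cC^s_{q,x}$ says exactly that $\norm{v^u}>a_q\norm{v^s}$, so in particular $v^u\neq0$ and $\norm{v^u}$ is bounded below by $a_q/(1+a_q)$. Condition (C2) gives $\norm{D_x\widetilde T_q^n v^u}\geq\widetilde C_q\widetilde\Lambda_q^n\norm{v^u}$ and, since $D_x\widetilde T_q^{-n}$ expands stable vectors by at least $\widetilde C_q\widetilde\Lambda_q^n$, also $\norm{D_x\widetilde T_q^n v^s}\leq\widetilde C_q^{-1}\widetilde\Lambda_q^{-n}\norm{v^s}$. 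Hence the ratio of the $E^s$-component of $D_x\widetilde T_q^n v$ to its $E^u$-component is at most $\widetilde C_q^{-2}\widetilde\Lambda_q^{-2n}a_q^{-1}$. Picking $k_q\in\N$ so that this quantity is below $a_q/2$ --- a choice depending only on the fixed data $a_q,\widetilde C_q,\widetilde\Lambda_q$ --- I obtain, for every $x$, every unit $v\notin\cC^s_{q,x}$ and every $n\geq k_q$, that $D_x\widetilde T_q^n v$ lies well inside $\cC^u_{q,\widetilde T_q^n x}$ and has norm $\geq c$ for a uniform constant $c>0$.

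Next I would transfer this to $\cT_{k_q}$ using the estimate $\norm{D_x\cT_{k_q}-D_x\widetilde T_q^{k_q}}\leq C(k_q)\ve_q$ from the proof of Lemma~\ref{lem:ass}, the bound $d(\cT_{k_q}x,\widetilde T_q^{k_q}x)=\cO(\ve_q)$, and continuity of the cones in the base point: since $D_x\widetilde T_q^{k_q}v$ sits strictly inside $\cC^u$ with norm bounded below, shrinking $\ve_q$ (depending only on $k_q$ and the fixed cone data) forces $D_x\cT_{k_q}v\in\cC^u_{q,\cT_{k_q}x}$, and compactness of $\cM$ and of the set of admissible unit vectors $v$ makes the requirement on $\ve_q$ uniform. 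The cone-capture claim for every $n\geq k_q$ then follows by writing $\cT_n=\cT_{n,k_q+1}\circ\cT_{k_q}$ and invoking Assumption (A1) along $T_{k_q+1},\dots,T_n$. For the growth bound I would use the crude uniform estimate $\norm{D_x\cT_n v}\geq M_q^{-n}\norm v$, valid for all $n\geq1$ because $(D_x\cT_n)^{-1}$ has norm at most $M_q^n$ with $M_q=\max\{1,\sup_{T\in\cU_q}\norm{DT^{-1}}\}$; for $1\leq n<k_q$ this already gives the claim after absorbing the factor $\Lambda_q^{-k_q}$, while for $n\geq k_q$ I would combine $\norm{D_x\cT_{k_q}v}\geq M_q^{-k_q}$ with Assumption (A2) applied to the shifted sequence $(T_i)_{i>k_q}$ and the vector $D_x\cT_{k_q}v\in\cC^u$, getting $\norm{D_x\cT_n v}\geq C_q\Lambda_q^{\,n-k_q}M_q^{-k_q}\norm v$. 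Collecting the constants yields an admissible $C_q'\in(0,1)$, uniform in $x$ and in the choice of $(T_i)$.

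The step I expect to be the main obstacle is the uniformity in the first part: I must verify that the number $k_q$ of iterations needed to pull \emph{every} vector outside $\cC^s_{q,x}$ into the unstable cone is bounded independently of how close $v$ lies to $\partial\cC^s_{q,x}$. The geometric decay $\widetilde C_q^{-2}\widetilde\Lambda_q^{-2n}a_q^{-1}$ is exactly what delivers this, the worst case being a vector on $\partial\cC^s_{q,x}$, where the initial stable-to-unstable ratio is the fixed number $a_q$ rather than something arbitrarily large. A related point to watch is keeping the smallness threshold on $\ve_q$ in the transfer step independent of $x$ and $v$, which is where compactness of $\cM$ and of the relevant unit-sphere complements is needed.
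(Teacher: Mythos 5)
Your proposal is correct and follows essentially the same route as the paper: decompose $v$ along $E^u_{q,x}\oplus E^s_{q,x}$, use (C2) to show that a fixed number $k_q$ of iterates of $\widetilde T_q$ places the image well inside the unstable cone (ratio $\leq a_q/2$), transfer to $\cT_{k_q}$ via $\norm{D_x\cT_{k_q}-D_x\widetilde T_q^{k_q}}=\cO(\ve_q)$ with uniformity by compactness, and obtain the growth bound from a crude uniform lower bound for $1\leq n<k_q$ combined with (A2) applied after time $k_q$. The only cosmetic differences are that the paper perturbs the components $D_x\cT_{k_q}v^u$, $D_x\cT_{k_q}v^s$ directly where you invoke cone continuity and closeness of base points, and that your explicit appeal to (A1) to propagate the cone condition from $n=k_q$ to all $n\geq k_q$ is left implicit in the paper.
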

\begin{proof}
If $v \in T_x\cM\setminus \cC^s_{q,x}$, we have $\norm{v^u} > a_q \norm{v^s}$. But $v_n^{u,s} \equiv D_x \widetilde T_q^n v^{u,s} \in E^{u,s}_{q,\widetilde T_q^n x}$, and $\norm{v_n^u}\geq C_q\Lambda_q^n \norm{v^u}$ and $\norm{v^s} \geq C_q\Lambda_q^n \norm{v_n^s}$. We have $\norm{v_n^u}\geq C_q^2a_q \Lambda_q^{2n}  \norm{v_n^s} \geq 2 a_q^{-1}\norm{v_n^s}$ for $n\geq k_q$, if $k_q$ is sufficiently large. Because $\norm{D_x \cT_{k_q} - D_x \widetilde T_q^{k_q}}\leq C \ve_q$, $\norm{D_x \cT_{k_q} v^s} \geq  c\norm{v^s}$, and $\norm{D_x \widetilde T_q^{k_q} v^u} \geq  c\norm{v^u}$ hold with some $C=C(k_q)$ and $c=c(k_q)$, we have
\beqn
\begin{split}
\norm{D_x \cT_{k_q}v^u} &\geq \norm{D_x \widetilde T_q^{k_q}v^u}-C \ve_q\norm{v^u} \geq \norm{D_x \widetilde T_q^{k_q}v^u} (1 - Cc^{-1}\ve_q) \\
&\geq 2 a_q^{-1}\norm{D_x \widetilde T_q^{k_q}v^s} (1 - Cc^{-1}\ve_q) \\
&\geq 2 a_q^{-1}(\norm{D_x \cT_{k_q}v^s} -C\ve_q\norm{v^s}) (1 - Cc^{-1}\ve_q) \\
&\geq 2 a_q^{-1}\norm{D_x \cT_{k_q}v^s}(1 -Cc^{-1}\ve_q)^2 
\geq a_q^{-1}\norm{D_x \cT_{k_q}v^s},
\end{split}
\eeqn
provided $\ve_q$ is small enough. This estimate shows that $D_x \cT_{k_q} v\in \cC_{q,\cT_{k_q} x}^u$.

The uniform estimate $\norm{D_x \cT_n v} \geq  c_q \norm{v}$ holds with some $c_q=c_q(k_q)<1$ for $1\leq n < k_q$. If $n = k_q+m$, $\norm{D_x \cT_n v} \geq C_q\Lambda_q^m\norm{D_x \cT_{k_q}  v} \geq c_q C_q\Lambda_q^m \norm{v}$. Hence, we can set $C_q' = c_qC_q/\Lambda_q^{k_q}$, so that $\norm{D_x \cT_n v} \geq  C_q' \Lambda_q^n \norm{v}$ for all $n\geq 1$.
\end{proof}

The next result on linear maps is cited from \cite[Lemma~6.1.1]{BrinStuck} up to notational changes.
\begin{lem}\label{lem:BrinStuck}
Let $\cL_n^1$ and $\cL_n^2$, $n\in\bN$, be two sequences of linear maps $\bR^M\to\bR^M$. Assume that for some $b>0$ and $\delta\in(0,1)$,
\beqn
\norm{\cL_n^1-\cL_n^2} \leq \delta b^n,\quad n\geq 0.
\eeqn
Suppose there are two subspaces $\cE^1$ and $\cE^2$ of $\bR^M$ and constants $C_\star>1$, $0<\lambda_\star<\mu_\star$ with $\lambda_\star<b$ such that
\beqn
\begin{cases}
\norm{\cL_n^i v}\, \leq C_\star\lambda_\star^n &\text{if $v\in \cE^i$}, \\
\norm{\cL_n^i w} \geq C_\star^{-1}\mu_\star^n &\text{if $w\perp \cE^i$}.
\end{cases}
\eeqn
Then
\beqn
\dist(\cE^1,\cE^2) \leq 3C_\star^2\frac{\mu_\star}{\lambda_\star}\delta^{(\ln\mu_\star-\ln\lambda_\star)/(\ln b - \ln \lambda_\star)}.
\eeqn
\end{lem}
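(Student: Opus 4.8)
The plan is to reduce the subspace estimate to a one-sided bound and then optimize over a single, well-chosen integer time. By the definition \eqref{eq:gap_dist}, $\dist(\cE^1,\cE^2)=\norm{P_{\cE^1}-P_{\cE^2}}$, and by the standard identity $\norm{P_{\cE^1}-P_{\cE^2}}=\max\bigl(\sup_{v\in\cE^1,\,\norm v=1}\dist(v,\cE^2),\ \sup_{w\in\cE^2,\,\norm w=1}\dist(w,\cE^1)\bigr)$ it suffices to bound $\dist(v,\cE^2)$ for an arbitrary unit vector $v\in\cE^1$; the second supremum is then controlled by the same argument with the roles of the two sequences interchanged. (In the intended application $\cE^1,\cE^2$ are one-dimensional, so the two quantities coincide and this reduction is automatic.)

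So I would fix a unit vector $v\in\cE^1$ and decompose $v=w+w^\perp$ with $w=P_{\cE^2}v\in\cE^2$ and $w^\perp\perp\cE^2$, so that $\norm w\le1$, $\norm{w^\perp}\le1$, and $\dist(v,\cE^2)=\norm{w^\perp}$. For every $n\ge0$ the hypotheses furnish a lower bound $\norm{\cL_n^2 w^\perp}\ge C_\star^{-1}\mu_\star^n\norm{w^\perp}$ (since $w^\perp\perp\cE^2$) and upper bounds $\norm{\cL_n^2 w}\le C_\star\lambda_\star^n$ (since $w\in\cE^2$) and $\norm{\cL_n^2 v}\le\norm{\cL_n^1 v}+\norm{\cL_n^2-\cL_n^1}\le C_\star\lambda_\star^n+\delta b^n$ (since $v\in\cE^1$). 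As $\cL_n^2 w^\perp=\cL_n^2 v-\cL_n^2 w$, these combine into
\beqn
\norm{w^\perp}\ \le\ C_\star\mu_\star^{-n}\bigl(2C_\star\lambda_\star^n+\delta b^n\bigr),\qquad n\in\bN.
\eeqn

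The remaining point is the choice of $n$, and this is the only place requiring care: because $b$ is allowed to exceed $\mu_\star$, the term $\delta(b/\mu_\star)^n$ need not decay, so one cannot simply send $n\to\infty$; the correct move is to balance the perturbative term $\delta b^n$ against the contracting term $\lambda_\star^n$, which is legitimate precisely because $\lambda_\star<b$. I would therefore take $n_\star=\lfloor\ln(1/\delta)/\ln(b/\lambda_\star)\rfloor\in\bN$, well defined and $\ge0$ since $0<\delta<1$ and $b>\lambda_\star$. By construction $\delta b^{n_\star}\le\lambda_\star^{n_\star}$ and $n_\star\ge\ln(1/\delta)/\ln(b/\lambda_\star)-1$. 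Substituting $n=n_\star$ into the display and using $C_\star>1$,
\beqn
\norm{w^\perp}\ \le\ C_\star\mu_\star^{-n_\star}\cdot3C_\star\lambda_\star^{n_\star}\ =\ 3C_\star^2\,(\lambda_\star/\mu_\star)^{n_\star}\ \le\ 3C_\star^2\,\frac{\mu_\star}{\lambda_\star}\,\delta^{(\ln\mu_\star-\ln\lambda_\star)/(\ln b-\ln\lambda_\star)},
\eeqn
where the last inequality uses that $x\mapsto(\lambda_\star/\mu_\star)^x$ is decreasing together with $n_\star\ge\ln(1/\delta)/\ln(b/\lambda_\star)-1$ and the elementary identity $(\lambda_\star/\mu_\star)^{\ln(1/\delta)/\ln(b/\lambda_\star)}=\delta^{(\ln\mu_\star-\ln\lambda_\star)/(\ln b-\ln\lambda_\star)}$. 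When $n_\star=0$ (i.e.\ $\delta$ not small) the right-hand side is $\ge3C_\star^2\ge1$, so the bound holds trivially from $\norm{w^\perp}\le1$. This is exactly the claimed estimate for $\dist(v,\cE^2)$, hence, via the identity of the first paragraph, for $\dist(\cE^1,\cE^2)$.

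I expect the main obstacle to be bookkeeping rather than conceptual: checking that the single integer choice $n_\star$ covers the regimes $b\le\mu_\star$ and $b>\mu_\star$ uniformly (rather than splitting into cases), and that the rounding loss in passing from the real-valued optimum to $n_\star$ is absorbed into the prefactor $\mu_\star/\lambda_\star$. The underlying mechanism — compare the two cocycles at exactly the time scale on which the perturbation $\delta b^n$ overtakes the contraction $\lambda_\star^n$, and read off the resulting H\"older exponent — is the standard one behind such transversality estimates.
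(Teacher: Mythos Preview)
Your argument is correct. The paper does not prove this lemma at all; it is quoted verbatim (up to notation) from Brin--Stuck, \emph{Introduction to Dynamical Systems}, Lemma~6.1.1, and the paper simply cites that reference. What you have written is exactly the standard proof that appears there: decompose a unit vector of $\cE^1$ orthogonally along $\cE^2$ and $(\cE^2)^\perp$, use the contraction/expansion hypotheses for $\cL_n^2$ together with the perturbation bound to control $\cL_n^2$ on the $\cE^1$-vector, and then choose $n$ so that $\delta b^n$ and $\lambda_\star^n$ balance, absorbing the integer rounding into the factor $\mu_\star/\lambda_\star$. Your handling of the projection identity $\norm{P_{\cE^1}-P_{\cE^2}}=\max\bigl(\norm{(I-P_{\cE^2})P_{\cE^1}},\norm{(I-P_{\cE^1})P_{\cE^2}}\bigr)$ and of the edge case $n_\star=0$ is also fine.
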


\begin{proof}[Proof of Lemma~\ref{lem:distr_Holder}]
We generalize the case of a single map found in \cite{BrinStuck}.
As the orthogonal complements $(T_x\cM)^\perp$ of $T_x\cM$ in $\bR^M$ form a smooth distribution $(T\cM)^\perp$ on $\cM$, we first prove that the distribution $E^0\oplus (T\cM)^\perp$ is H\"older continuous on $\cM$ and then deduce the H\"older continuity of $E^0$.

Let $P_x$ be the orthogonal projection $\bR^M\to T_x\cM$. It depends smoothly on $x$.
We define 
\beqn
L^{(i)}(x) = D_xT_i \circ P_x.
\eeqn
This map extends $D_xT_i$ to a linear map $\bR^M\to \bR^M$. Let us also set
\beqn
\cL_n(x) = L^{(n)}(\cT_{n-1}x)\cdots L^{(1)}(x).
\eeqn

Recall from above that $E_x^0 \subset \cC^{s}_{1,x}$. Setting $C = \prod_{1\leq q\leq Q}C_q$  and $\Lambda = \min_{1\leq q\leq Q}\Lambda_q$, we have
\beqn
\norm{D_x \cT_n v} \leq  C^{-1} \Lambda^{-n} \norm{v},\qquad v\in E_x^0,
\eeqn
 by (A2). This translates to
\beq\label{eq:BrinStuck_cond_1a}
\norm{\cL_n(x) v} \leq  C^{-1} \Lambda^{-n} \norm{v},\qquad v\in E_x^0\oplus (T_x\cM)^\perp.
\eeq
By Assumption (A4), we can make the cones so narrow that a vector in $T_x\cM$ perpendicular to $E_x^0$ lies in the \emph{complement} of $\cC^{s}_{1,x}$. Setting $C' = \prod_{1\leq q\leq Q}C'_q$, Lemma~\ref{lem:attracting_cone} thus yields 
\beqn
\norm{D_x \cT_n w} \geq  C'\Lambda^n \norm{w},\qquad w\in T_x\cM:w\perp E_x^0.
\eeqn
In other words,
\beq\label{eq:BrinStuck_cond_1b}
\norm{\cL_n(x) w} \geq  C'\Lambda^n \norm{w},\qquad w\perp E_x^0\oplus (T_x\cM)^\perp.
\eeq

Clearly
 $\norm{L^{(i)}(x)} = \norm{D_x T_i}$. For brevity, let us write $b_1 = \sup_{T}\sup_{x\in\cM}\norm{D_x T}$ and $b_2 = \sup_{T}\sup_{x\in\cM}\norm{D_x (D_x T\circ P_x)}$, where $T$ runs over $\cU_1\cup\dots\cup\cU_Q$.  Since $\cL_{n+1}(x) = L^{(n+1)}(\cT_n x) \cL_n(x)$,
\beqn
\begin{split}
& \norm{\cL_{n+1}(x) - \cL_{n+1}(y)} 
\\
&\qquad \leq \norm{L^{(n+1)}(\cT_n x)}\,\norm{\cL_n(x) - \cL_n(y)} + \norm{L^{(n+1)}(\cT_n x) - L^{(n+1)}(\cT_n y)} \, \norm{\cL_n(y)} 
\\ 
& \qquad \leq b_1 \norm{\cL_n(x) - \cL_n(y)} + b_2 \norm{\cT_n x - \cT_n y} \,b_1^n
\leq b_1 \norm{\cL_n(x) - \cL_n(y)} + b_2 b_1^{2n} \norm{x - y}.
\end{split}
\eeqn
As $\|\cL_1(x)-\cL_2(y)\|\leq b_2\|x-y\|$, we obtain the bound
\beq\label{eq:BrinStuck_cond_2}
\norm{\cL_{n}(x) - \cL_{n}(y)} \leq c \norm{x-y} b_1^{2n},
\eeq
with the constant $c=\frac{b_2}{b_1(b_1-1)}$. 

The bounds \eqref{eq:BrinStuck_cond_1a}, \eqref{eq:BrinStuck_cond_1b}, and \eqref{eq:BrinStuck_cond_2} show that all conditions of Lemma~\ref{lem:BrinStuck} are satisfied, if we take $\cL_n^1=\cL_n(x)$, $\cL_n^2=\cL_n(y)$, $\cE^1=E^0_x\oplus (T_x\cM)^\perp$, and $\cE^2=E^0_y\oplus (T_y\cM)^\perp$. Writing $\alpha = 2\ln\Lambda/(2\ln b_1+\ln\Lambda)$ and $K = 3\max(C^{-1},C'^{-1})^2 \Lambda^2 \,c^\alpha$,
\beqn
\begin{split}
& \dist\! \left(E^0_x\oplus (T_x\cM)^\perp,E^0_y\oplus (T_y\cM)^\perp\right) \leq K\norm{x-y}^{\alpha}
\end{split}
\eeqn
provided $\norm{x-y}<1/c$.

By compactness of the manifold and smoothness of the distribution $(T\cM)^\perp$, we have \linebreak $\dist((T_x\cM)^\perp,(T_y\cM)^\perp)\leq L\norm{x-y}$ for some $L$. Hence, by \eqref{eq:gap_dist} and \eqref{eq:proj_sum},
\beqn
\dist(E_x^0,E_y^0) \leq (K+L)\norm{x-y}^\alpha \qquad \text{if $\norm{x-y}< 1/c$}.
\eeqn

Notice that this bound does not depend on the sequence $(T_i)_{i\geq 1}$. Moreover, the same upper bound is obtained for each distribution $E^n$ by disregarding the first $n$ maps and considering the sequence $(T_i)_{i> n}$ instead. The result for $F^n$ is obtained by reversing time.
\end{proof}


\section{Inclination Lemma type results}

\begin{lem}\label{lem:exp_comp}
Fix $1\leq q\leq Q$ and let $T_i\in \cU_q$ for each $i$. There exists a constant $C_\#>0$ such that, for all $w\in\cC_{q,x}^u$ and all $\tilde w\in T_x\cM$ with $\|w\|=\|\tilde w\|=1$, the vectors $w_n=D_x\cT_n w$ and $\tilde w_n = D_x\cT_n \tilde w$ satisfy
\beqn
\frac{\|\tilde w_n\|}{\|w_n\|} \leq C_\#\qquad \forall\, n\geq 0.
\eeqn
If also $\tilde w\in\cC_{q,x}^u$, the angle between $w_n$ and $\tilde w_n$ tends to zero at a uniform exponential rate: 
\beq\label{eq:angle_conv}
 1- \left| \left\langle \frac{w_n}{\|w_n\|},\frac{\tilde w_n}{\|\tilde w_n\|} \right\rangle \right| \leq \min(c,C\Lambda_q^{-4n})
\eeq
for some $0<c<1$ and $C>0$.
\end{lem}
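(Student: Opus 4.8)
The plan is to reduce both assertions to two uniform facts about the action of $D_x\cT_n$ on the unstable cone $\cC^u_{q,x}$: \textbf{(i)} \emph{bounded distortion} --- $\bigl|\ln\|D_x\cT_n w\|-\ln\|D_x\cT_n w'\|\bigr|\le C$ uniformly in $n$, $x$, the sequence $(T_i)$, and the unit vectors $w,w'\in\cC^u_{q,x}$ --- and \textbf{(ii)} \emph{angular contraction} --- the images under $D_x\cT_n$ of any two unit vectors of $\cC^u_{q,x}$ have angular distance $\cO(\Lambda_q^{-2n})$. Granting these, the lemma follows quickly. Since $D_x\cT_n w,D_x\cT_n\tilde w\in\cC^u_{q,\cT_n x}$ by (A1), their angle is bounded by the aperture of the (narrow) cone, which gives the bound $c<1$; by (ii) it is also $\cO(\Lambda_q^{-2n})$, and as $1-|\langle u,v\rangle|=1-\cos\angle(u,v)\le\tfrac12\angle(u,v)^2$ for unit vectors (sign chosen so the inner product is nonnegative), the bound $C\Lambda_q^{-4n}$ follows (recall $\Lambda_q\le\widetilde\Lambda_q$, so bounds in terms of $\widetilde\Lambda_q$ imply the stated ones). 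For the first assertion, decompose $\tilde w=\tilde w^u+\tilde w^s$ along the uniformly transverse splitting $E^u_{q,x}\oplus E^s_{q,x}$, so $\|\tilde w^u\|,\|\tilde w^s\|\le C$; then $\|D_x\cT_n\tilde w^u\|\le C'\|D_x\cT_n w\|$ is exactly (i).

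For the term $\tilde w^s$, let $m^\ast\le\infty$ be the first time $j$ with $D_x\cT_j\tilde w^s\in\cC^u_{q,\cT_j x}$ (monotone by (A1), and $m^\ast\ge1$ since $E^s_{q,x}\cap\cC^u_{q,x}=\{0\}$): for $j<m^\ast$ the time-reversed form of Lemma~\ref{lem:attracting_cone} applied from $\cT_j x$ gives $\|\tilde w^s\|\ge C'_q\Lambda_q^{j}\|D_x\cT_j\tilde w^s\|$, so $\|D_x\cT_j\tilde w^s\|\le (C'_q\Lambda_q^{j})^{-1}\le C\,\|D_x\cT_j w\|$ because $\|D_x\cT_j w\|\ge C_q\Lambda_q^{j}$ by (A2); and for $j\ge m^\ast$, one step gives $\|D_x\cT_{m^\ast}\tilde w^s\|\le\bar\Lambda_q(C'_q\Lambda_q^{m^\ast-1})^{-1}$, after which (i) applied at base point $\cT_{m^\ast}x$ compares the subsequent growth of $D_x\cT_{m^\ast}\tilde w^s$ with that of $D_x\cT_{m^\ast}w$, yielding $\|D_x\cT_j\tilde w^s\|\le C\|D_x\cT_j w\|$ (using $\Lambda_q^{2m^\ast-1}\ge\Lambda_q$). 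Summing the two pieces gives the first assertion.

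It remains to establish (i) and (ii), and I carry out (ii) first. Parametrize the unstable cone in slope coordinates relative to the invariant splitting $E^u_q\oplus E^s_q$ of $\widetilde T_q$: unit vectors of $\cC^u_{q,x}$ are $(u+ts)/\|u+ts\|$ for fixed unit $u\in E^u_{q,x}$, $s\in E^s_{q,x}$ and $|t|\le a_q$, and $D_{\cT_{i-1}x}T_i$ induces on $t$ a Möbius map $\phi_i$ carrying $[-a_q,a_q]$ strictly into itself by (A1). For $\widetilde T_q$ the splitting is invariant, so $\phi_i$ is linear with factor $\lambda^s/\lambda^u$ and the $n$-step slope map has derivative $\prod\lambda^s/\prod\lambda^u=\|D\widetilde T_q^n|_{E^s}\|/\|D\widetilde T_q^n|_{E^u}\|\le\widetilde C_q^{-2}\widetilde\Lambda_q^{-2n}$ by (C2). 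For $(T_i)$, each $T_i$ is $\cO(\ve_q)$-close to $\widetilde T_q$ and $(\cT_{i-1}x)_i$ is an $\cO(\ve_q)$-pseudo-orbit of $\widetilde T_q$; shadowing it by a genuine orbit and noting the reference matrix entries are bounded above and away from zero, one gets $|\phi_i'(t)|\le|\lambda^s_i/\lambda^u_i|\,(1+\cO(\ve_q))$ on $[-a_q,a_q]$, so the $n$-step slope map $\Phi_n$ obeys $|\Phi_n'|\le\widetilde C_q^{-2}\bigl(\widetilde\Lambda_q^{-2}e^{\cO(\ve_q)}\bigr)^{n}$; since $\Lambda_q<\widetilde\Lambda_q$ (Lemma~\ref{lem:ass}), taking $\ve_q$ small makes this $\le\widetilde C_q^{-2}\Lambda_q^{-2n}$. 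Two cone slopes at distance $\le2a_q$ are thus mapped to distance $\cO(\Lambda_q^{-2n})$, and since slope $\mapsto$ direction is uniformly Lipschitz on $[-a_q,a_q]$, this is (ii). Finally (i) follows from (ii): writing $\hat w_{j-1},\hat w'_{j-1}\in\cC^u_{q,\cT_{j-1}x}$ for the normalized images, $\ln\|D_x\cT_n w\|-\ln\|D_x\cT_n w'\|=\sum_{j=1}^{n}\bigl(\ln\|D_{\cT_{j-1}x}T_j\hat w_{j-1}\|-\ln\|D_{\cT_{j-1}x}T_j\hat w'_{j-1}\|\bigr)$, each summand is $\cO(\angle(\hat w_{j-1},\hat w'_{j-1}))=\cO(\Lambda_q^{-2(j-1)})$ because $v\mapsto\ln\|D_yT_jv\|$ is uniformly Lipschitz on the unit cone ($D_yT_j$ being invertible with uniformly bounded inverse), and the geometric series sums to a bound independent of $n$ and $(T_i)$.

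The main obstacle is the uniformity in (ii): the $\cO(\ve_q)$ errors introduced at each step (from $T_i\ne\widetilde T_q$ and from the moving base point) must be shown to build up only to a factor $e^{\cO(n\ve_q)}$, not an exponentially larger one, and to be absorbed by the strict gap $\Lambda_q<\widetilde\Lambda_q$, so that the slope map of the whole composition inherits the $\Lambda_q^{-2n}$-contraction of $\widetilde T_q$. The shadowing step controlling $\prod|\lambda^s_i/\lambda^u_i|$ along the pseudo-orbit is the only slightly non-elementary ingredient; it can be avoided by arguing directly from (A2) together with the forward invariance and absorption of the cone fields (Lemma~\ref{lem:attracting_cone}).
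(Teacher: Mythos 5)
Your proposal is essentially correct, but it takes a genuinely different route from the paper, and the difference is worth spelling out. The paper does not use the reference splitting $E^u_q\oplus E^s_q$ as coordinates along the orbit; instead it builds frames adapted to the actual composition: a line field $\tilde E^n\subset\cC^s_{q}$ obtained by pulling back an arbitrary stable-cone line field from a late time $N$, and a line field $\tilde F^n\subset\cC^u_{q}$ pushed forward from time $0$. In these equivariant frames the coefficients of $w_n$ and $\tilde w_n$ evolve exactly by the norms $\|D_x\cT_n\ff^0_x\|$ and $\|D_x\cT_n\fe^0_x\|$, so Assumption (A2) alone gives the slope decay $C_q^{-2}\Lambda_q^{-2n}$ and hence both the $C_\#$ bound and \eqref{eq:angle_conv}, with no accumulated error to absorb. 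Your route proves the slope contraction by comparing each step of the M\"obius cocycle to $\widetilde T_q$, which forces you to control $\prod_i \lambda^s_i/\lambda^u_i$ along the pseudo-orbit $(\cT_{i-1}x)$ via a shadowing lemma plus H\"older continuity of the multipliers of $\widetilde T_q$, and then to absorb the resulting $e^{\cO(n\ve_q)}$ factor using a strict gap $\Lambda_q<\widetilde\Lambda_q$; note that this gap is not literally part of (A2) as stated (it holds for the constants produced in the proof of Lemma~\ref{lem:ass}, or can be arranged by decreasing $\Lambda_q$), so you should say so explicitly. Your treatment of the $C_\#$ bound also differs: because you decompose along $E^u_{q,x}\oplus E^s_{q,x}$, whose stable factor is not forward-invariant under the maps $T_i$, you need the first-entry time $m^\ast$ and a time-reversed version of Lemma~\ref{lem:attracting_cone}; that statement is true by the symmetry of (A1)--(A2) but is not stated in the paper, so it must be proved or cited as the symmetric argument. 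What your approach buys is that it uses only canonical objects attached to $\widetilde T_q$ and cleanly isolates two reusable facts (uniform bounded distortion inside the cone, and angle contraction $\cO(\Lambda_q^{-2n})$, from which \eqref{eq:angle_conv} follows via $1-\cos\theta\le\theta^2/2$); what it costs is the shadowing/H\"older bookkeeping, which is exactly what the paper's pulled-back and pushed-forward frames are designed to avoid --- if you replace your reference splitting by those frames, your slope cocycle becomes diagonal and (A2) finishes the estimate directly.
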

\begin{proof}
We first construct ``fake'' stable and unstable distributions for finite sequences $T_1,\dots,T_N$. Fix $N\geq 1$.
First, choose a distribution $\tilde E^N$ such that $\tilde E^N_x\subset \cC^s_{q,x}$ for all $x$. Then define the distributions $\tilde E^n$, $0\leq n<N$, by pulling back: $\tilde E_x^n = D_{T_{n+1} x} T_{n+1}^{-1}\tilde E_{T_{n+1} x}^{n+1} \subset \cC^s_{q,x}$. Next, choose a distribution $\tilde F^0$ such that $\tilde F^0_x\subset \cC^u_{q,x}$ for all $x$. Then set recursively $\tilde F^n_x = D_{T_n^{-1}x}T_n\, \tilde F^{n-1}_{T_n^{-1}x}$ for $n\geq 1$. We will use $\tilde E^n_x$ and $\tilde F^n_x$ as coordinate axes. Let $\fe^n_x\in \tilde E^n_x$ and $\ff^n_x\in \tilde F^n_x$ be unit vectors oriented so that $D_xT_n \fe^n_x$ and $\fe^{n+1}_{T_nx}$  point in the same direction and $D_xT_n \ff^n_x$ and $\ff^{n+1}_{T_nx}$ point in the same direction. 

Recall from Section~\ref{subsec:Anosov_comp} that the angle between $E^u_{q,x}$ and $E^s_{q,x}$ is uniformly bounded away from zero. In other words, there exists a $\psi_q>0$ such that $\average{E^u_{q,x},E^s_{q,x}}\leq 1-2\psi_q$ for all $x\in \cM$. By Assumption (A4), the cones can be assumed narrow enough, so that
$
\average{U,V}\leq 1-\psi_q
$
for all subspaces $U\subset \cC^u_{q,x}$ and $V\subset \cC^s_{q,x}$, and for all $x\in\cM$. 
Because of 
this, $|\langle \fe^n_x, \ff^n_x \rangle|\leq 1-\psi_q$ for all $x$ and all $0\leq n\leq N$. Thus, we have the uniform bounds
\beq\label{eq:length_comp}
\psi_q(\alpha^2+\beta^2) \leq \|\alpha \ff^n_x+\beta \fe^n_x\|^2 \leq 2(\alpha^2+\beta^2), \qquad\forall\,\alpha,\beta, \quad 0\leq n\leq N.
\eeq
Now write $ w_n = \alpha_n \ff^n_{\cT_n x}+\beta_n \fe^n_{\cT_n x}$ and $ \tilde w_n = \tilde \alpha_n \ff^n_{\cT_n x}+ \tilde \beta_n \fe^n_{\cT_n x}$ and estimate, for $0\leq n\leq N$,
\beqn
\frac{\| \tilde w_n\|^2}{\| w_n\|^2}   \leq \frac{2(\tilde \alpha_n^2 + \tilde \beta_n^2)}{\psi_q^2\alpha_n^2} = \frac{2\tilde\alpha_0^2}{\psi_q^2\alpha_0^2} + \frac{2\tilde\beta_0^2\|D_x\cT_n \fe^0_x\|^2}{\psi_q^2\alpha_0^2\|D_x\cT_n \ff^0_x\|^2} \leq \frac{2\tilde\alpha_0^2}{\psi_q^2\alpha_0^2} + \frac{2\tilde\beta_0^2(C_q\Lambda_q^n)^{-2}}{\psi_q^2\alpha_0^2(C_q\Lambda_q^n)^2}.
\eeqn
We have used Assumption (A2) to bound the norms involving $D_x\cT_n$. From \eqref{eq:length_comp}, $\tilde \alpha_0^2,\tilde\beta_0^2\leq \psi_q^{-1}$. As $w$ is an unstable unit vector, $|\alpha_0|$ is bounded from below by some $A_q>0$.
Thus
\beqn
\frac{\|\tilde w_n\|^2}{\| w_n\|^2} \leq \frac{2}{\psi_q^3A_q^2}(1+C_q^{-4}) \equiv C_\#^2 \qquad 0\leq n\leq N.
\eeqn
But $N$ was arbitrary, so the bound holds for all $n\geq 0$. In particular, $C_\#$ does not depend on the constructed distributions. A computation also shows that, if both $\tilde w,w\in\cC_{q,x}^u$, then $1-|\langle w_n,\tilde w_n\rangle|/\|w_n\| \|\tilde w_n\|$ is of order $\beta_n\tilde \beta_n/\alpha_n\tilde\alpha_n \leq C_q^{-4}\Lambda_q^{-4n}\beta_0\tilde \beta_0/\alpha_0\tilde \alpha_0\leq C\Lambda_q^{-4n}$. 
\end{proof}

\begin{lem}\label{lem:coalesce}
Fix $1\leq q\leq Q$, $0<\lambda<1$, and $\delta>0$. Fix $N\geq 1$ and $T_i\in \cU_q$ for $1\leq i\leq N$. Take two points $x_1,x_2$ and assume that $d(\cT_n x_1,\cT_n x_2)<\delta\lambda^n$ if $0\leq n\leq N$. Suppose $W_1,W_2$ are unstable curves with respect to $\{\cC^u_{q,x}\}$ and that $x_i\in W_i$. Then
\beq\label{eq:coalesce}
\left| \frac{\cJ_{\cT_nW_1} T_{n+1} (\cT_n x_1)}{\cJ_{\cT_n W_2}T_{n+1}(\cT_n x_2) } -1  \right| \leq C' \mu^n \qquad 0\leq n\leq N-1.
\eeq
The constants $C'>1$ and $0<\mu<1$ are independent of $N$, of the curves $W_i$, and of the choice of $T_1,\dots,T_N$, as long as the bound on $d(\cT_n x_1,\cT_n x_2)$ continues to hold.
\end{lem}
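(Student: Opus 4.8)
The plan is to write each Jacobian as the length of a derivative vector and to estimate the ratio through two independent sources of error. Write $x_i^n=\cT_n x_i$ and let $u_i^n$ be the unit tangent of $\cT_n W_i$ at $x_i^n$, so $u_i^n\in\cC^u_{q,x_i^n}$ and $\cJ_{\cT_n W_i}T_{n+1}(x_i^n)=\|L_i u_i^n\|$, where $L_i=D_{x_i^n}T_{n+1}\circ P_{x_i^n}\colon\bR^M\to\bR^M$ is the ambient extension of the derivative used in Appendix~\ref{app:Holder} and $P_y$ denotes the orthogonal projection onto $T_y\cM$. Choosing the orientations so that $\langle u_1^n,u_2^n\rangle\ge 0$ (which does not change $\|L_i u_i^n\|$) and using $|z/w-1|\le|z-w|/w$ for $z,w>0$, it suffices to prove: \textbf{(i)} $\|L_2 u_2^n\|$ is bounded below by a positive constant depending only on $q$; and \textbf{(ii)} $\|L_1 u_1^n-L_2 u_2^n\|\le C''\mu^n$ for suitable $\mu\in(0,1)$ and $C''$. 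Claim (i) follows from $D_y T=D_y\widetilde T_q+\cO(\ve_q)$ together with (C2) (using (A0)), which give $\|D_y T\,v\|\ge\tfrac12\widetilde C_q\widetilde\Lambda_q\|v\|>0$ for $v\in\cC^u_{q,y}$, $T\in\cU_q$, once $\ve_q$ is small, exactly as in the proof of Lemma~\ref{lem:ass}.

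For (ii) I would split $L_1 u_1^n-L_2 u_2^n=L_1(u_1^n-u_2^n)+(L_1-L_2)u_2^n$. The second summand has norm at most $\|L_1-L_2\|\le b_2\,d(x_1^n,x_2^n)<b_2\,\delta\lambda^n$, since $y\mapsto D_y T\circ P_y$ is Lipschitz with constant $b_2$ uniform over $T\in\cU_q$ and over the compact $\cM$ (the constant $b_2$ of Appendix~\ref{app:Holder}). The first summand has norm at most $b_1\|u_1^n-u_2^n\|$, with $b_1=\sup_T\sup_y\|D_y T\|$; and with the chosen orientation $\|u_1^n-u_2^n\|=\dist'(\langle u_1^n\rangle,\langle u_2^n\rangle)$. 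So it remains to show that this subspace distance decays exponentially.

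To that end, let $G^n$ be the finite-time unstable distribution of $T_1,\dots,T_N$ anchored at $E^u_q$, i.e.\ $G^n_{\cT_n y}=D_y\cT_n\,E^u_{q,y}$ (this is the distribution $F^n$ of Section~\ref{sec:foliations}, for the sequence at hand regarded as one of the type in Section~\ref{subsec:Anosov_comp} with single reference map $\widetilde T_q$). Since $\dist(A,B)\le\dist'(A,B)\le\sqrt2\,\dist(A,B)$ for one-dimensional $A,B$, the quantity $\dist'$ obeys the triangle inequality up to a factor $\sqrt2$, whence
\[
\dist'(\langle u_1^n\rangle,\langle u_2^n\rangle)\ \le\ \sqrt2\Bigl(\dist(\langle u_1^n\rangle,G^n_{x_1^n})+\dist(G^n_{x_1^n},G^n_{x_2^n})+\dist(G^n_{x_2^n},\langle u_2^n\rangle)\Bigr).
\]
For the outer terms, apply \eqref{eq:angle_conv} of Lemma~\ref{lem:exp_comp} at the base point $x_i$ with $w=u_i^0\in\cC^u_{q,x_i}$ and $\tilde w$ a unit vector of $E^u_{q,x_i}\subset\cC^u_{q,x_i}$; as $\langle u_i^n\rangle$ and $G^n_{x_i^n}$ are the spans of $D_{x_i}\cT_n w$ and $D_{x_i}\cT_n\tilde w$, formulas \eqref{eq:dist'} and \eqref{eq:angle_conv} give $\dist(\langle u_i^n\rangle,G^n_{x_i^n})\le\dist'(\langle u_i^n\rangle,G^n_{x_i^n})\le(2C)^{1/2}\Lambda_q^{-2n}$. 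For the middle term, use the \emph{uniform} H\"older continuity of $G^n$: by Lemma~\ref{lem:distr_Holder}, whose proof applies verbatim with H\"older exponent $\alpha$ and constant $C_H$ depending only on $q$, $\dist(G^n_{x_1^n},G^n_{x_2^n})\le C_H\,d(x_1^n,x_2^n)^\alpha\le C_H\,\delta^\alpha\lambda^{\alpha n}$ as soon as $d(x_1^n,x_2^n)<1/c$, i.e.\ for all but finitely many $n$. Putting these together yields (ii) with $\mu=\max(\Lambda_q^{-2},\lambda^\alpha)<1$; the finitely many remaining $n$ are absorbed by enlarging the final constant, using the crude bound $\cJ_{\cT_n W_1}T_{n+1}(x_1^n)/\cJ_{\cT_n W_2}T_{n+1}(x_2^n)\le b_1/(\tfrac12\widetilde C_q\widetilde\Lambda_q)$. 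The constants $C'>1$ and $\mu$ obtained this way depend only on $q,\delta,\lambda$, and in particular not on $N$, on the curves $W_i$, or on $T_1,\dots,T_N$.

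The crux is that the tangent directions $u_1^n$ and $u_2^n$ live over \emph{different} base points, so that neither Lemma~\ref{lem:exp_comp} (comparing vectors over a common base point) nor a direct estimate on $DT_{n+1}$ reaches them. The remedy is to route the comparison through the canonical finite-time unstable direction $G^n$: Lemma~\ref{lem:exp_comp} collapses each curve's tangent onto $G^n$ over its own base point at rate $\Lambda_q^{-2n}$, and the uniform H\"older regularity of $G^n$ from Appendix~\ref{app:Holder} then crosses between the two (exponentially close) base points at rate $\lambda^{\alpha n}$ --- and it is precisely this uniformity that makes $C'$ and $\mu$ independent of the sequence. A minor technical point, handled as in Appendix~\ref{app:Holder}, is that all the derivatives in play act between different tangent spaces of $\cM\subset\bR^M$, so the estimates are carried out for the ambient extensions $L_i$ and $P_y$.
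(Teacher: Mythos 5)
Your proposal is correct and follows essentially the same route as the paper's proof: bound the Jacobian ratio via the ambient extensions $L=D T\circ P$, split the error into the Lipschitz term $\norm{L_1-L_2}\lesssim\delta\lambda^n$ and the tangent-direction term, and control the latter by collapsing each $u_i^n$ onto the finite-time unstable distribution anchored at $E^u_q$ via \eqref{eq:angle_conv} and crossing between base points via the uniform H\"older continuity of that distribution (Lemma~\ref{lem:distr_Holder} in the $Q=1$ case). Your extra care with the $\dist'$ triangle inequality, the small-$n$ regime where $d(x_1^n,x_2^n)\geq 1/c$, and the lower bound on the one-step Jacobian are fine refinements of details the paper treats implicitly.
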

\begin{proof}
Choose $\tilde F^0_x = E^u_{q,x}$ and define $\tilde F^n_x = D_{T_n^{-1}x}T_n\, \tilde F^{n-1}_{T_n^{-1}x}$ for $1\leq n\leq N$. By Lemma~\ref{lem:distr_Holder}~\footnote{Lemma~\ref{lem:distr_Holder} has been formulated for $F^n$ as defined in \eqref{eq:fake_unstable}. Considering the special case $Q=1$, we can clearly recover the claimed result for $\tilde F^n$ as defined here.}, the distributions $\tilde F^n$ belong to a fixed H\"older class, no matter which $T_1,\dots,T_N$ and $N$ are chosen. Notice that $\tilde F^n_x \subset \cC^u_{q,x}$ for $0\leq n\leq N$. 

Let $u_i^n$ ($i=1,2$) stand for a unit tangent vector of $\cT_n W_i$ at $x_i^n=\cT_n x_i$. We also write $L(x)=D_xT_{n+1}\circ P_x$, where $P_x$ is the orthogonal projection $\bR^M\to T_x \cM$, which is smooth. \beqn
\begin{split}
\left| \frac{\cJ_{\cT_nW_1} T_{n+1} (x_1^n)}{\cJ_{\cT_n W_2}T_{n+1}(x_2^n) } -1  \right| &\leq \frac{1}{C_q}\left| \norm{D_{x_1^n}T_{n+1}u_1^n }-\norm{D_{x_2^n}T_{n+1}u_2^n }\right|= \frac{1}{C_q}\left| \norm{L(x_1^n)u_1^n }-\norm{L(x_2^n)u_2^n }\right|
\\
& \leq \frac{1}{C_q} \left( \norm{L(x_1^n)-L(x_2^n)} + \norm{L(x_2^n)} \min_{\sigma=\pm1} \norm{u_1^n-\sigma u_2^n} \right)
\\
& \leq C\left(  \lambda^n + \dist'(U_1^n,U_2^n) \right).
\end{split}
\eeqn
We have denoted by $U_i^n$ the linear subspaces of $\bR^M$ spanned by $u_i^n$ and recalled the definition in \eqref{eq:dist'}. The angle between $U_i^n$ and $\tilde F^n_{x_i^n}$ decays exponentially: $\dist'(U_i^n,\tilde F^n_{x_i^n})\leq C \Lambda_q^{-2n}$ due to \eqref{eq:dist'} and \eqref{eq:angle_conv}. Hence, it suffices to prove exponential decay of $\dist'(\tilde F^n_{x_1^n},\tilde F^n_{x_2^n})$. But this follows from H\"older continuity and the assumption $d(x_1^n,x_2^n)<\delta\lambda^n$.
\end{proof}


\bibliography{Anosov_coupling}{}
\bibliographystyle{plain}

\end{document}